\numberwithin{equation}{section}
\newtheorem{thm}{Theorem}[section]
\newtheorem{prop}[thm]{Proposition}
\newtheorem{lem}[thm]{Lemma}
\newtheorem{cor}[thm]{Corollary}
\newtheorem{dfn}[thm]{Definition}
\newtheorem{rem}[thm]{Remark}
\newcommand{\R}{\mathbb{R}}
\newcommand{\C}{\mathbb{C}}
\newcommand{\Z}{\mathbb{Z}}
\newcommand{\N}{\mathbb{N}}
\newcommand{\hi}{\left}
\newcommand{\mi}{\right}
\newcommand{\bl}{\bigl}
\newcommand{\br}{\bigr}
\newcommand{\Bl}{\Bigl}
\newcommand{\Br}{\Bigr}
\newcommand{\ran}{\rangle}
\newcommand{\lan}{\langle}
\newcommand{\hs}{\hspace}
\newcommand{\mc}{\mathcal}
\newcommand{\cf}{\mathcal{F}}
\newcommand{\ch}{\mathcal{H}}
\newcommand{\ci}{\mathcal{I}}
\newcommand{\mb}{\mathbf}
\newcommand{\bfu}{\mathbf{u}}
\newcommand{\bfe}{\mathbf{E}}
\newcommand{\bff}{\mathbf{F}}
\newcommand{\bfg}{\mathbf{G}}
\newcommand{\al}{\alpha}
\newcommand{\be}{\beta}
\newcommand{\ga}{\gamma}
\newcommand{\si}{\sigma}
\newcommand{\na}{\nabla}
\newcommand{\Del}{\Delta}
\newcommand{\vep}{\varepsilon}
\newcommand{\del}{\delta}
\newcommand{\pt}{\partial_t}
\newcommand{\pa}{\partial}
\newcommand{\com}{\mathcal{C}}
\newcommand{\ove}{\overline}
\newcommand{\til}{\tilde}
\newcommand{\fr}{\frac}
\newcommand{\ope}{\operatorname}
\newcommand{\les}{\lesssim}
\newcommand{\gtr}{\gtrsim}
\newcommand{\tm}{\textrm}
\newcommand{\nt}{\notag}
\newcommand{\sm}{\setminus}
\newcommand{\cd}{\cdot}
\newcommand{\cds}{\cdots}
\title[LWP for a system of QDNLS]{Local well-posedness for a system of quadratic derivative nonlinear Schrödinger equations}
\author{Kohei Akase}
\address[K. Akase]{Department of Mathematics, Graduate School of Science, The University of Osaka, Toyonaka, Osaka, 560-0043, Japan}
\email{u306026b@ecs.osaka-u.ac.jp}
\keywords{Schr\"odinger equation;
derivative nonlinearity;
Cauchy problem;
well-posedness;
short-time Fourier restriction norm method}
\begin{document}
\subjclass[2020]{35Q55, 35B30}
\maketitle\begin{abstract}
We consider the Cauchy problem for a system of quadratic derivative nonlinear Schr\"odinger equations introduced by M. Colin and T. Colin (2004) as a model of laser-plasma interaction.
Under the condition that the flow map fails to be twice differentiable, Hirayama, Kinoshita, and Okamoto (2022) proved the well-posedness by constructing a modified energy and applying the energy method.
In the present paper, we improve the well-posedness result under the above condition by using the short-time Fourier restriction norm method.
In particular, we can remove the assumpotion of the smallness for the initial data.
\end{abstract}

\section{Introduction}
We consider the Cauchy problem for the following system of Schr\"{o}dinger equations
\begin{equation}\label{CC}
\hi\{\begin{aligned}
&(i \pt+\al \Del) u =-(\na \cd w) v, \quad(t, x) \in(0, \infty) \times \R^d, \\
&(i \pt+\be \Del) v =-(\na \cd \ove{w}) u, \quad(t, x) \in(0, \infty) \times \R^d, \\
&(i \pt+\ga \Del) w =\na(u \cd \ove{v}), \ \ \,\quad(t, x) \in(0, \infty) \times \R^d, \\
&(u(0, x), v(0, x), w(0, x)) =(u_0(x), v_0(x), w_0(x)), \quad x \in \R^d,
\end{aligned}\mi.
\end{equation}
where $\al, \be, \ga \in \R \sm\{0\}$ and the unknown functions $u, v, w$ are $\mathbb{C}^d$-valued.
For $\C^d$-valued functions $F=(F_1, \ldots, F_d)$ and $G=(G_1, \ldots, G_d)$, $F\cd G$ is defined by
\[
F\cd G = \sum_{j=1}^d F_jG_j.
\]
Also, we set $\na = (\pa_{x_1}, \ldots, \pa_{x_d})$ for $x=(x_1, \ldots, x_d)\in \R^d$, and $\na\cd F$ denotes the divergence of $F$, that is,
\[
\na\cd F =  \sum_{j=1}^d \pa_{x_j}F_j.
\]
Moreover, the initial data $(u_0, v_0, w_0)$ belongs to
\[
\ch^s(\R^d) := (H^s(\R^d))^d \times (H^s(\R^d))^d \times (H^s(\R^d))^d,
\]
where $H^{s}(\R^d)$ denotes the $L^2$-based Sobolev space. 

The system \eqref{CC} was derived by Colin and Colin in \cite{CC2004} as a model of laser-plasma interaction.
In \cite{CC2004}, Colin and Colin proved the uniqueness and local existence of the solution to \eqref{CC} in $\ch^{s}(\R^d)$ for $s>\fr{d}{2}+3$ under the assumption $\al, \be, \ga>0$.
The system \eqref{CC} has the scale invariance.
Namely, if $\bfu=(u, v, w)$ is a solution to \eqref{CC}, then $\bfu^{\lambda}(t, x) := \lambda^{-1}\bfu(\lambda^{-2}t, \lambda^{-1}x)$ is also a solution to \eqref{CC} for $\lambda>0$.
Also, the scaling critical Sobolev regularity is $s_c=\fr{d}{2}-1$.

In the case of \eqref{CC}, the relation of the coefficients $\al, \be, \ga$ of Laplacian determines the structure.
In particular, the quantity $\til\kappa:=(\al-\ga)(\be+\ga)$ plays a crucial role.
When $\til \kappa\neq 0$, Hirayama \cite{Hirayama2014}, Hirayama and Kinoshita \cite{HK2019}, Hirayama, Kinoshita, and Okamoto \cite{HKO2021} proved the well-posedness in $\ch^s(\R^d)$ for $s$ satisfying the conditions which depend on the dimension and the quantities $\kappa:=(\al-\be)(\al-\ga)(\be+\ga)$, $\mu:=\al\be\ga(\fr{1}{\al}-\fr{1}{\be}-\fr{1}{\ga})$.
They showed these results by using the Fourier restriction norm method and the Loomis-Whitney inequality.
We note that there results are almost sharp as long as we apply the iteration argument.
Also, in \cite{HKO2020}, Hirayama, Kinoshita, and Okamoto showed that the well-posedness result is improved if we consider the radial initial data in the case of $d=2, 3$ and $\mu=0$.

On the other hand, Hirayama proved in \cite{Hirayama2014} that the iteration argument does not work in $\ch^s(\R^d)$ for any $s\in\R$ in the case of $\til \kappa=0$.
In this case, Hirayama, Kinoshita, and Okamoto \cite{HKO2022} proved the well-posedness for small and large data in $\ch^{s}(\R^d)$ for $s>\fr{d}{2}+3$ if $\al, \be, \ga\in \R$ satisfy $\be+\ga\neq 0$ and $\be\ga>0$, respectively.
They proved by applying the modified energy method and the Bona-Smith approximation.
We note that the ill-posedness is shown in the case of $\be+\ga=0$, $s\in \R$ and $\al-\ga=0$, $s<0$ when we consider the $d$-dimensional torus.
See \cite{HKO2024}.

In the present paper, we improve the well-posedness result for \eqref{CC} under the condition $\til \kappa=0$, $\be+\ga\neq 0$.
The main result is the following.
\begin{thm}\label{thm1}
Let $d\in \N$, $\al, \be, \ga\in \R\sm\{0\}$ satisfy $\al-\ga=0$ and $\be + \ga \neq 0$, and 
$s> \fr{1}{2}(d+1)$.
Then, \eqref{CC} is locally well-posed in $\ch^{s}(\R^d)$.
More precisely, for any $R>0$, there exists $T\in (0, 1]$ as follows:
For any $(u_0, v_0, w_0)\in \ch^s(\R^d)$ satisfying $\|(u_0, v_0, w_0)\|_{\ch^s}\le R$, there exists a solution $(u, v, w)\in C([0, T]; \ch^s(\R^d))$ of \eqref{CC}.
Also, such solution is unique in $C([0, T]; \ch^s(\R^d))\cap \bfe^s(T)\cap \bff^s(T)$, where $\bfe^s(T)$ and $\bff^s(T)$ are defined in subsection 2.2 below.
Moreover, the data-to-solution map
\begin{align*}
S^s: \{&(u_0, v_0, w_0)\in \ch^s(\R^d) \mid \|(u_0, v_0, w_0)\|_{\ch^s}\le R\} \to C([0, T]; \ch^s(\R^d))
\end{align*}
is continuious.
\end{thm}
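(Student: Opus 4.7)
Because $\alpha=\gamma$ makes the resonance function for \eqref{CC} degenerate and any $X^{s,b}$-type iteration therefore fails (see \cite{Hirayama2014}), I would follow the short-time Fourier restriction norm method of Ionescu--Kenig--Tataru. The setup is as follows: the solution space $\bff^{s}(T)$ is a Bourgain-type norm whose dyadic piece at frequency $\sim N$ is measured on time intervals of length $\sim N^{-1}$; the energy space $\bfe^{s}(T)$ essentially keeps $\sup_{t\in[0,T]}\|\cdot(t)\|_{\ch^{s}}$ together with its dyadic decomposition; and an auxiliary nonlinearity space $\mathbf{N}^{s}(T)$ is defined as the corresponding short-time dual. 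The argument rests on three estimates: a linear bound
\[
\|u\|_{\bff^{s}(T)}\lesssim \|u\|_{\bfe^{s}(T)}+\|(i\partial_{t}+\alpha\Delta)u\|_{\mathbf{N}^{s}(T)}
\]
(and its analogues for $\beta,\gamma$), a short-time trilinear estimate bounding each of $(\nabla\cdot w)v$, $(\nabla\cdot\overline{w})u$, $\nabla(u\cdot\overline{v})$ in $\mathbf{N}^{s}(T)$ by a product of $\bff^{s}(T)$ norms, and an energy estimate
\[
\|(u,v,w)\|_{\bfe^{s}(T)}^{2}\;\lesssim\;\|(u_{0},v_{0},w_{0})\|_{\ch^{s}}^{2}+\|(u,v,w)\|_{\bff^{s}(T)}^{3}.
\]

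\textbf{Key estimates.} For the trilinear bound I would perform a Littlewood--Paley decomposition and split according to the relative sizes of the three frequencies $N_{1},N_{2},N_{3}$. High-low and low-high interactions are handled by bilinear $L^{2}_{t,x}$ Strichartz estimates on intervals of length $N^{-1}$. The dangerous case is high-high-to-low, which forces a further split into resonant and non-resonant modulations; there the structural assumption $\beta+\gamma\neq 0$ supplies the transversality needed for the bilinear $L^{2}$ refinement and prevents total cancellation in the resonant zone. For the energy estimate I would differentiate $\|P_{N}(u,v,w)(t)\|_{L^{2}}^{2}$ in time, substitute \eqref{CC}, symmetrize the resulting trilinear form, and perform a normal-form-type integration by parts against the modulation in the non-resonant region, leaving a genuinely resonant remainder whose symbol vanishes on the resonant set sufficiently to be absorbed after dyadic summation; once again $\beta+\gamma\neq 0$ is what enforces this vanishing. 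Both estimates are designed to close precisely at the threshold $s>\frac{d+1}{2}$.

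\textbf{Completion and main obstacle.} With these three estimates in hand, a standard continuity argument on $T\mapsto \|(u,v,w)\|_{\bff^{s}(T)}+\|(u,v,w)\|_{\bfe^{s}(T)}$ yields an a priori bound on a time $T=T(R)$ for classical solutions, which exist by the result of Colin--Colin \cite{CC2004}. A Bona--Smith approximation, together with the analogous Lipschitz-type difference estimates at the $\ch^{s-1}$ level, then produces a genuine $\ch^{s}$-solution as a strong limit in $C([0,T];\ch^{s})\cap \bff^{s}(T)\cap \bfe^{s}(T)$, and those same difference estimates deliver the uniqueness in the stated class as well as continuity of the data-to-solution map. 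The main obstacle will be the short-time trilinear estimate in the high-high-to-low regime with near-resonant modulations, which is exactly the interaction that destroys twice-differentiability of the flow; extracting the required gain from the $N^{-1}$-short-time localization while crucially exploiting both $\alpha=\gamma$ and $\beta+\gamma\neq 0$ is the heart of the argument, and this regime is what pins the threshold at $s>\frac{d+1}{2}$ rather than the scaling critical value $s_{c}=\frac{d}{2}-1$.
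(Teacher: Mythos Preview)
Your overall strategy---short-time Fourier restriction norm method with linear, nonlinear, and energy estimates, followed by an approximation/continuity argument---is exactly the paper's approach. Two points, however, deserve attention.

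First, the energy estimate is more delicate than your sketch suggests. Differentiating $\|P_N\bfu\|_{L^2}^2$ and integrating by parts (equivalently, adding trilinear correction terms $M_N$) is indeed what the paper does, but the resulting modified energy is not automatically coercive: the correction $M_N$ can be as large as $N^{-1}\|\bfu\|_{\ch^{s_0}}\|P_N\bfu\|_{\ch^0}^2$, which for large data may dominate the main term. In \cite{HKO2022} this forced a smallness assumption. The paper's key new device is to absorb this by inserting a further multiplicative correction, replacing $N^{2s}\|P_N\bfu\|_{\ch^0}^2$ by $N^{2s}(1+\tilde C N^{-1}\|\bfu\|_{\ch^{s_0}}^2)\|P_N\bfu\|_{\ch^0}^2$ in the modified energy; this guarantees coercivity for arbitrary data and is what produces the prefactor $(1+\|\bfu(0)\|_{\ch^s}^2)$ and the quintic term in Lemma~\ref{energy1}. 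Your outline does not address this point, and without it the argument would only close for small data.

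Second, you invoke Colin--Colin for classical solutions, but their result assumes $\alpha,\beta,\gamma>0$, which is not part of the hypotheses here. The paper instead constructs smooth solutions via a frequency-truncated approximation system \eqref{K-CC}, for which local well-posedness is elementary, and obtains a $K$-independent lifespan from the a~priori estimate. A smaller discrepancy: the paper carries out difference estimates at the $\ch^0$ level (Proposition~\ref{dest1}) rather than $\ch^{s-1}$, and then upgrades to $\ch^s$ via an estimate that loses to $\ch^{2s}$ on one factor (Lemma~\ref{dene2}, Proposition~\ref{dest3}), which is what makes the Bona--Smith style argument close.
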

\noindent
The definition of the solution to \eqref{CC} is given in Definition \ref{defi sol} below.
Also, we note that we have $\bfe^{s}(T)\hookrightarrow C([0, T]; \ch^{s})$.
Moreover, Corollary \ref{str_cor} yields $\bff^{s}(T)\hookrightarrow \bfe^{s}(T)$.

To prove Theorem \ref{thm1}, we apply the short-time Fourier restriction norm method introduced by Ionescu, Kenig, and Tataru \cite{IKT}. (See also \cite{CCT}, \cite{KoTa}).
This method is the combination of the energy method and the Fourier restriction norm method.
In particular, we use the linear, nonlinear, and energy estimates.
The linear estimate is shown in \cite{IKT}.
In this paper, we show the nonlinear and energy estimates.
These are shown in Section 4 and Sections 6, 7, respectively.

We explain how we derive the exponent to be well-posed.
It is possible to obtain the local well-posedness in $\ch^{s}(\R^d)$ with $s>\fr{d}{2}+1$ by using the modified energy method with the frequency-dependent correction terms.
In this paper, we also use the bilinear Strichartz estimate.
From this, we can recover the $\fr{1}{2}$ derivative loss, and thus the exponent to be well-posed is expected to be relaxed by $\fr{1}{2}$.
This argument is performed in Section 5. 
Also, we state the optimality of the trilinear estimate under the assumption of $\al-\ga=0$ and $\be+\ga\neq 0$ in Appendix A.2.
Furthermore, the optimality of the setting of the function space defined in subsection 2.2 below is considered in Appendix A.1.

For the proof of Theorem \ref{thm1}, we first prove the existence of a smooth solution with the smooth large initial data via an approximation equation.
See \eqref{K-CC} below.
The key point to treat the large initial data is the way to construct the modified energy.
Colin and Colin \cite{CC2004} considered the system of $u, v, w$, $\pa_{x_j} u, \pa_{x_j} v, \pa_{x_j} w$, $\Del u, \Del v, \Del w$, and $\pt u, \pt v, \pt w$ and found the symmetries of this system.
Thereafter, in the case of $\be\ga>0$,  Hirayama, Kinoshita, and Okamoto \cite{HKO2022} constructed the modified energy enjoying the coercivity by using $\pt u, \pt v, \pt w$.
On the other hand, in \cite{HKO2022}, the smallness of initial data was assumed to obtain the coercivity of the modified energy in the case of $\al-\ga=0$, $\be+\ga \neq 0$.

In this paper, we use correction terms which are similar to \cite{HKO2022} but depend on the frequency.
Also, we add further correction terms to guarantee the coercivity of the modified energy also in the case of $\al-\ga=0$, and $\be+\ga\neq 0$.
From this modification, the assumption of the smallness for initial data can be removed.
See \eqref{ene10} below.

The rest of this paper is organized as follows. 
In Section 2, we prepare the notations and introduce the function spaces.
In Section 3, we give the key estimates and prove Theorem \ref{thm1} under the assumption that these key estimates hold.
In Section 4, we prove the nonlinear estimates.
In Section 5, we show the trilinear estimate which are used to prove the energy estimates.
In Section 6, we prove the energy estimate in the setting of Theorem \ref{thm1}.
In Section 7, we prove the estimate of the difference in the setting of Theorem \ref{thm1}.

\section{Preliminaries}
We first give the definition of the solution used in this paper.
\begin{dfn}\label{defi sol}
Let $d\in \N$, $s>\max\{\fr{d}{2}-1, \fr{1}{2}\}$, $T>0$.
We say $(u, v, w)\in C([0, T]; \ch^{s}(\R^d))$ is a $\ch^s$-solution to \eqref{CC} if $(u, v, w)$ satisfies
\begin{gather*}
u(t) = u_0 + i\al \int_0^t\Del u(t')\,dt' +i\int_0^t(\na\cd w)v\, dt', \\
v(t) = v_0 + i\be \int_0^t\Del v(t')\,dt' +i\int_0^t(\na\cd \ove{w})u\, dt', \\
u(t) = w_0 + i\ga \int_0^t\Del w(t')\,dt' -i\int_0^t\na(u\cd \ove{v})\, dt' 
\end{gather*}
in $\ch^{s-2}(\R^d)$ for all $t\in [0, T]$.
\end{dfn}
\noindent
We note that if $s>\max\{\fr{d}{2}-1, \fr{1}{2}\}$ and $(u, v, w)\in C([0, T]; \ch^s(\R^d))$, then we have $((\na\cd w)v, (\na\cd \ove{w})u, \na(u\cd \ove{v}))\in C([0, T]; \ch^{s-2}(\R^d))$.
Thus, the nonlinearities are well-defined in $\ch^{s-2}(\R^d)$ and we have $(u, v, w)\in C^1([0, T]; \ch^{s-2}(\R^d))$.
Moreover, the similar argument yields that $(u, v, w)\in C^2([0, T]; \ch^{s-4}(\R^d))$ if $s>\max\{\fr{d}{2}-1, \fr{3}{2}\}$.

\subsection{Notation}
We define the spatial Fourier transformation by
\[
\cf_x[f](\xi):=(2\pi)^{-\fr{d}{2}}\int_{\R^d}f(x)e^{-ix\cd\xi}\,dx.
\]
Moreover, the space-time Fourier transformation is defined by
\[
\cf_{t, x}[f](\tau, \xi):=(2\pi)^{-\fr{1}{2}(d+1)}\int_{\R}\int_{\R^d}f(t, x)e^{-ix\cd\xi-it\tau}\,dx dt.
\]
We set $\N_0:=\N\cup\{0\}$.
For multiindex $k=(k_1, \ldots, k_d)\in \N_0^d$, we set $\pa^k:=\pa_{x_1}^{k_1}\cds\pa_{x_d}^{k_d}$ and $|k|:=k_1+\cds+k_d$.
We use $A\les B$ to denote an estimate of the form $A\le CB$ for some positive constant $C$ and write $A\sim B$ to express $A\les B$ and $B\les A$.
Also, we write $A\ll B$ to mean $A\le C^{-1}B$.
For $S\subset \R^d$, we define $\mb{1}_{S}$ as a characteristic function on $S$.
For $p \in[1, \infty]$, $T>0$, and a Banach space $X$, we denote
\[
L_T^p X:=L^p([0, T] ; X).
\]
We also define $C_TX$ and so on in the same manner.
Since Section 3, with a slight abuse of notation, for a Banach space $X$ and $f=(f_1, \ldots, f_d)$, we say $f\in X$ if $f_1, \ldots, f_d\in X$.
Also, we set $\|f\|_{X}^2:=\sum_{j=1}^d\|f_j\|_{X}^2$.

In the following, we fix $\eta\in C_0^\infty(\R)$ which is even and non-increasing on $[0, \infty)$ and satisfies $\mb{1}_{[-4/3, 4/3]}\le \eta\le \mb{1}_{[-5/3, 5/3]}$.
For $j\in \N_0$, we define $\eta_j\in C_0^{\infty}(\R)$ by
\begin{equation}\label{eta}
\eta_j(\xi):=\hi\{
\begin{array}{ll}
\eta(\xi)  &(j=0), \\
\eta(\xi/2^{j})-\eta(\xi/2^{j-1})  &(j\ge 1).
\end{array}
\mi.
\end{equation}
Also, for $N\in 2^{\N_0}$, we define $\psi_N\in C_0^{\infty}(\R^d)$ by
\begin{equation*}
\psi_N(\xi):=\hi\{
\begin{array}{ll}
\eta(|\xi|)  &(N=1), \\
\eta(|\xi|/N)-\eta(2|\xi|/N)  &(N\ge 2).
\end{array}
\mi.
\end{equation*}
Then, $\psi_N$ satisfies $\sum_{N\in 2^{\N_0}} \psi_N(\xi) =1\ (\xi\in\R^d)$ and $\ope{supp}\psi_N \subset \ci_N$, where
\[
\ci_N:=\hi\{
\begin{array}{ll}
\hi\{\xi\in \R^d \mid |\xi|\le 2\mi\}&(N=1), \\
\hi\{\xi\in \R^d \mid 2^{-1}N\le |\xi|\le 2N \mi\} &(N\ge 2).
\end{array}
\mi.
\]
The (inhomogeneous) Littlewood-Paley operator is defined by $\cf_x[P_N f](\xi):=\psi_N(\xi)\cf_x[f](\xi)$.
Also, we define $P_{\le N}:= \sum_{M\in 2^{\N_0}, M\le N}P_{M}$.
For $K_1, K_2\in 2^{\N}$ satisfying $K_1\le K_2$, we define $J_{\le K_1}$ and $J_{(K_1, K_2]}$ by
\[
\cf_x[J_{\le K_1} f](\xi) := \mb{1}_{[-K_1, K_1]}(|\xi|)\cf_x[f](\xi), \quad
J_{(K_1, K_2]}:= J_{\le K_2} - J_{\le K_1}.
\]
We set $J_{\le \infty}:= Id$.
For $j\in\N_0$ and $\si\in \R$, the modulation projection $Q_j^\si$ is defined by
\[
\mc{F}_{t, x}[Q_j^\si f](\tau, \xi):= \eta_j(\tau+\si|\xi|^2)\cf_{t, x}[f](\tau, \xi).
\]

\subsection{Function spaces}

In this subsection, we define function spaces to prove the well-posedness.
For $N\in2^{\N_0}$ and $\si\in \R\sm\{0\}$, we define
\begin{align*}
X_{N, \si}=\bl\{f \in L_{\tau, \xi}^2&(\R \times \R^d) \big| f(\tau, \xi) \text { is supported on } \R\times \ci_N, \\
& \|f\|_{X_{N, \si}}:=\sum_{j\in \N_0} 2^{\fr{j}{2}}\|\eta_j(\tau+\si|\xi|^2) f(\tau, \xi)\|_{L_{\tau, \xi}^2}<\infty \br\}.
\end{align*}

The H\"older and the triangle inequalities yield 
\begin{equation}\label{Xns2}
\Bl\|\int_{\R}|f(\tau, \xi)| d \tau\Br\|_{L_\xi^2}
\les\|f\|_{X_{N, \si}}
\end{equation}
for $N \in 2^{\N_0}$, $\si\in\R\sm\{0\}$, and $f\in X_{N, \si}$.
Also, for $\si\in\R\sm\{0\}$, $N\in 2^{\N_0}$, $\ell\in \N_0$, $f \in X_{N, \si}$, and $T\in (0, 1]$, we have
\begin{align}
& \sum_{j\ge \ell + 1} 2^{\fr{j}{2}}\Bl\|\eta_j(\tau+\si|\xi|^2) \int_{\R}|f(\tau', \xi)|\cd 2^{-\ell}T(1+2^{-\ell}T|\tau-\tau'|)^{-4} d \tau'\Br\|_{L_{\tau, \xi}^2} \label{Xns3}\\
& +2^{\fr{\ell}{2}}\Bl\|\eta_{\le \ell}(\tau+\si|\xi|^2) \int_{\R}|f(\tau', \xi)|\cd 2^{-\ell}T(1+2^{-\ell}T|\tau-\tau'|)^{-4} d \tau'\Br\|_{L_{\tau, \xi}^2} \nt\\
&\les \|f\|_{X_{N, \si}}. \nt
\end{align}
Here, the implicit constant is independent of $N$, $\ell$ and $T$.
For the proof of \eqref{Xns3}, see the sentence next to (3.11) in \cite{Guo}.
From \eqref{Xns3}, for $N\in 2^{\N_0}$, $\ell \in \N_0$, $t_0 \in \R$,  $T\in (0, 1]$, $f_k \in X_k$, and $\psi \in \mc{S}(\R)$, we obtain
\begin{equation}\label{Xns4}
\|\mc{F}_{t, x}[\psi (2^{\ell}T^{-1}(t-t_0)) \cd \mc{F}_{\tau, \xi}^{-1}(f)]\|_{X_{N, \si}}
\les\|f\|_{X_{N, \si}}.
\end{equation}
In this paper, \eqref{Xns2}--\eqref{Xns4} are used to show Lemma \ref{lin}.

To measure solutions to \eqref{CC}, for $N\in 2^{\N_0}$, $\si\in \R\sm\{0\}$, and $T\in (0, 1]$, we first define $F_{N, \si}(T)$ by
\begin{align*}
F_{N, \si}(T)= \bl\{u \in C_TL_x^2\ &\big|\ \|u\|_{F_{N, \si}(T)}:=\inf _{\til{u}(t)=u(t) \text { on } [0, T]}\|\widetilde{u}\|_{F_{N, \si, T}}<\infty\br\},
\end{align*}
where the infimum is taken over $\til u \in F_{N, \si, T}$ which is defined by
\begin{equation}\label{Fnst}
\begin{aligned}
F_{N, \si, T}= \bl\{&u \in C_0(\R; L_x^2(\R^d))\ \big|\ \mc{F}_{t, x}[u](\tau, \xi) \text { is supported in } \R \times \ci_N, \\
&\|u\|_{F_{N, \si, T}}=\sup_{t_N \in \R}\|\cf_{t, x}[\eta_0(NT^{-1}(t-t_N)) \cd u]\|_{X_{N, \si}}<\infty\br\}, 
\end{aligned}
\end{equation}
\[
C_0(\R; L^2(\R^d)):=\{f\in C(\R; L^2(\R^d)) \mid f(t) = 0 \textrm{ if  } t\in \R\sm[-4, 4]\}
\]
Also, to measure nonlinearities, for $N\in 2^{\N_0}$, $\si\in \R\sm\{0\}$, and $T\in (0, 1]$, we define $G_{N, \si}(T)$ by
\begin{align*}
G_{N, \si}(T)= \bl\{f \in C_TL_x^2\ &\Big|\ \|f\|_{G_{N, \si}(T)}:=\inf _{\til{f}(t)=f(t) \text { on } [0, T]}\|\widetilde{f}\|_{G_{N, \si, T}}<\infty\br\},
\end{align*}
where the infimum is taken over $\til u \in G_{N, \si, T}$ which is defined by
\begin{align*}
&G_{N, \si, T}= \bl\{f \in C_0(\R; L_x^2(\R^d))\ \Big|\ \mc{F}_{t, x}[u](\tau, \xi) \text { is supported in } \R \times \ci_N, \\
&\|f\|_{G_{N, \si, T}}\hs{-1mm}:=\hs{-1mm}\sup_{t_N \in \R}\|(\tau + \si|\xi|^2+iNT^{-1})^{-1} \cf[\eta_0(NT^{-1}(t-t_N)) \cd f]\|_{X_{N, \si}}\hs{-1mm}<\infty \hs{-0.5mm}\br\}\hs{-0.5mm} .
\end{align*}

Now, we define the function spaces to prove the well-posedness for \eqref{CC}.
For $s\in\R$, $\si\in\R\sm\{0\}$, $t_N \in \R$, and $T\in (0, 1]$, we define $F_{\si}^s(T)$ and  $G_{\si}^s(T)$ by
\begin{gather*}
F_{\si}^s(T):=\bl\{u \in C_TH_x^s\ \bigl|\ \|u\|_{F_{\si}^s(T)}:=\bigl\|N^{s}\|P_N u\|_{F_{N, \si}(T)}\bigr\|_{\ell_N^2}<\infty\br\}, \\
G_{\si}^s(T):=\bl\{f \in C_TH_x^s\ \bigl|\ \|f\|_{G_{\si}^s(T)}:=\bigl\|N^{s}\|P_N f\|_{G_{N, \si}(T)}\bigr\|_{\ell_N^2}<\infty\br\}.
\end{gather*}
Also, we use the following energy space defined  by
\[
E^s(T):=\bl\{u \in C_TH_x^s\ \big|\ \|u\|_{E^s(T)}:= \bl\|N^{s}\|P_N u\|_{L_T^\infty L_x^2}\br\|_{\ell_N^2}\ <\infty\br\}
\]
for $s\in \R$ and $T\in (0, 1]$.
From the definition, we immediately obtain $E^s(T)\hookrightarrow C_TH^s$.
Also, we have $F_{\si}^s(T)\hookrightarrow E^s(T)$.
See Corollary \ref{str_cor} below.

For the notation, we denote
\[
\begin{array}{c}
\bfe^s(T):=  (E^s(T))^d\times (E^s(T))^d\times (E^s(T))^d, \\
\bff^s(T):=  (F_{\al}^s(T))^d\times (F_{\be}^s(T))^d\times (F_{\ga}^s(T))^d, \\
\bfg^s(T):=  (G_{\al}^s(T))^d\times (G_{\be}^s(T))^d\times (G_{\ga}^s(T))^d.
\end{array}
\]
\begin{rem}\label{rem_del}\rm
From the definitios of $F_{N, \si}(T)$, we have 
\begin{equation*}
\|\pa^k |\na|^\vep P_{\le N}u\|_{F_{N', \si}(T)}\les N^{|k|+\vep}\|u\|_{F_{N', \si}(T)}
\end{equation*}
for $N, N'\in 2^{\N_0}$, $k\in \N_0^d$, and $\vep\ge 0$.
It also holds $\||\na|^{-\vep} J_{(K_1, K_2]}u\|_{F_{N', \si}(T)}\les K_1^{-\vep}\|u\|_{F_{N', \si}(T)}$ for $K_1\le K_2\in 2^{\N}\cup \{\infty\}$ and $\vep>0$.
Moreover, we can obtain the similar estimates if we change ${F_{N', \si}(T)}$ into ${G_{N', \si}(T)}$, $F_{N', \si, T}$, and $G_{N', \si, T}$.
\end{rem}

For $N\in 2^{\N_0}$, we define the set $S_N$ by $N$-acceptable time multiplication factors: 
\[
S_N=:\bl\{m: \R \to \R \ \big|\  \|m\|_{S_N} := \sum_{0\le |k|\le 10} N^{-k} \|\pa^k m\|_{L^{\infty}(\R)}<\infty \br\}.
\]
Then, the following properties hold.

\begin{lem}
Let $N\in 2^{\N_0}$, $\si\in \R\sm\{0\}$, and $T\in (0, 1]$.
Then, we have
\begin{equation}
\|m(t) f\|_{G_{N, \si, T}} \les\|m\|_{S_N}\|f\|_{G_{N, \si, T}} \nt
\end{equation}
for any $f\in G_{N, \si, T}$ and $m\in S_N$.
\end{lem}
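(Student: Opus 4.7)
The plan is to localize the multiplication by $m$ to a time interval of scale $T/N$, pass to the Fourier side, and exploit the rapid decay of the Fourier transform of a suitably cut-off version of $m$ together with the weighted convolution estimate \eqref{Xns3}.

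Fix $t_N \in \R$ and write $\chi_{t_N}(t):=\eta_0(NT^{-1}(t-t_N))$. Choose $\til\eta_0 \in C_0^\infty(\R)$ with $\til\eta_0 \eta_0 = \eta_0$ and set $\til\chi_{t_N}(t):=\til\eta_0(NT^{-1}(t-t_N))$, so that $\til\chi_{t_N}\chi_{t_N}=\chi_{t_N}$. Then
\[
\chi_{t_N}(t) m(t) f(t) = n(t)\cd \chi_{t_N}(t) f(t), \qquad n(t):=\til\chi_{t_N}(t) m(t),
\]
and $n$ is supported in an interval of length $\sim T/N$. First I would verify, by the Leibniz rule together with $\|\pa^k \til\chi_{t_N}\|_{L^\infty}\les (N/T)^k$ and $\|\pa^k m\|_{L^\infty}\le N^k \|m\|_{S_N}$ (valid for $0\le k\le 10$, using $T\le 1$), that $\|\pa^k n\|_{L^\infty}\les (N/T)^k \|m\|_{S_N}$. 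Integrating by parts in $\tau$ then yields, for any $0\le L\le 10$,
\[
|\cf_t[n](\tau_0)|\les \|m\|_{S_N}\,\fr{T}{N}\,\Bl(1+\fr{T}{N}|\tau_0|\Br)^{-L}.
\]

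Next I would pass to the space–time Fourier side. Writing $H(\tau,\xi):=\cf_{t,x}[\chi_{t_N}f](\tau,\xi)$ and $\phi(\tau,\xi):=(\tau+\si|\xi|^2+iNT^{-1})^{-1}$, convolution gives
\[
\phi(\tau,\xi)\cf_{t,x}[\chi_{t_N}mf](\tau,\xi)
= (2\pi)^{-\fr12}\int \cf_t[n](\tau-\tau')\,\fr{\phi(\tau,\xi)}{\phi(\tau',\xi)}\,\phi(\tau',\xi)H(\tau',\xi)\,d\tau'.
\]
The elementary bound
\[
\Bl|\fr{\phi(\tau,\xi)}{\phi(\tau',\xi)}\Br|
=\Bl|1+\fr{\tau'-\tau}{\tau+\si|\xi|^2+iNT^{-1}}\Br|
\le 1+\fr{T}{N}|\tau-\tau'|
\]
combines with the decay of $\cf_t[n]$ (take $L=5$, say) to produce
\[
|\phi\,\cf_{t,x}[\chi_{t_N}mf](\tau,\xi)|
\les \|m\|_{S_N}\int K_N(\tau-\tau')\,|\phi H|(\tau',\xi)\,d\tau',
\]
with $K_N(\tau)\les \fr{T}{N}\bl(1+\fr{T}{N}|\tau|\br)^{-4}$.

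Finally, I would apply \eqref{Xns3} with $\ell$ chosen so that $2^{\ell}=N$ (hence $2^{-\ell}T=T/N$). This is precisely the scale of $K_N$, so \eqref{Xns3} implies that convolution by $K_N$ maps $X_{N,\si}$ to itself with constant $\les 1$; combined with the factor $\|m\|_{S_N}$ above, this gives
\[
\|\phi\,\cf_{t,x}[\chi_{t_N}mf]\|_{X_{N,\si}}
\les \|m\|_{S_N}\,\|\phi H\|_{X_{N,\si}}
\le \|m\|_{S_N}\,\|f\|_{G_{N,\si,T}}.
\]
Taking the supremum over $t_N \in \R$ yields the desired estimate. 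The only genuinely delicate point is matching scales: one must use the fact that the modulation weight $NT^{-1}$ built into $G_{N,\si,T}$ is exactly the reciprocal of the time–localization scale $T/N$ of $\chi_{t_N}$, which is what allows the loss $(1+(T/N)|\tau-\tau'|)$ from $\phi(\tau,\xi)/\phi(\tau',\xi)$ to be absorbed into the Fourier decay of $\cf_t[n]$ and fit the kernel into \eqref{Xns3}.
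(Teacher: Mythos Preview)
Your argument is correct and is precisely the standard proof that underlies the paper's reference: the paper itself does not write out a proof but defers to the paragraph after (3.13) in \cite{Guo}, where exactly this localize--Fourier-decay--convolution scheme is carried out. One small point worth making explicit: the left-hand side of \eqref{Xns3} is not literally the $X_{N,\si}$-norm of the convolution, since the low-modulation pieces $j\le\ell$ are aggregated into a single $\eta_{\le\ell}$-term; but because the convolution is nonnegative and $\sum_{j\le\ell}2^{j/2}\les 2^{\ell/2}$, the full $X_{N,\si}$-norm is controlled by the left-hand side of \eqref{Xns3}, so your final step goes through.
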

\begin{proof}
See the paragraph which follows (3.13) in \cite{Guo}.
\end{proof}
\begin{rem}\rm
In this paper, this lemma is used to show Lemma \ref{lin}.
Also, the same estimate holds if we change $G_{N, \si, T}$ into $F_{N, \si, T}$.
\end{rem}

\section{Proof of the main theorem}
To prove Theorem \ref{thm1}, we first show the existence of a solution to \eqref{CC} with smooth initial data.
To prove this, we consider the Cauchy problem for the approximation system
\begin{equation}\label{K-CC}
\hi\{\begin{aligned}
&(i \pt+\al \Del) u_{K} =-J_{\le K}((\na \cd J_{\le K}w_{K}) J_{\le K}v_{K}), \hs{14mm}(t, x) \in(0, \infty) \times \R^d, \\
&(i \pt+\be \Del) v_{K} =-J_{\le K}((\ove{\na \cd J_{\le K} w_{K}}) J_{\le K} u), \hs{17mm}(t, x) \in(0, \infty) \times \R^d, \\
&(i \pt+\ga \Del) w_{K} =\na J_{\le K}(J_{\le K} u_{K} \cd \ove{J_{\le K} v_{K}}), \hs{20.5mm}(t, x) \in(0, \infty) \times \R^d, \\
&(u_{K}(0, x), v_{K}(0, x), w_{K}(0, x)) =(J_{\le K}u_0(x), J_{\le K}v_0(x), J_{\le K}w_0(x)), \hs{2mm}  x \in \R^d
\end{aligned}\mi.
\end{equation}
for $K\in 2^{\N}\cup\{\infty\}$.
The system \eqref{K-CC} is equal to \eqref{CC} if $K=\infty$.
We construct a solution to \eqref{CC} by taking the limit of \eqref{K-CC} as $K\to\infty$.
It is easy to prove the local well-posedness of \eqref{K-CC} if $K\in 2^{\N}$.
\begin{lem}\label{wp_K-CC}
Let $s>\fr{d}{2}$ and $K\in 2^{\N}$.
Then, \eqref{K-CC} is locally well-posed in $\ch^{s}(\R^d)$.
Moreover, if $(u, v, w)\in C([0, T]; \ch^s(\R^d))$ is a solution to \eqref{K-CC}, then we have $(u, v, w)\in C([0, T]; \ch^{s'}(\R^d))$ for any ${s'}\in \R$.
\end{lem}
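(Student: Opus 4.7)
The plan is to apply Picard iteration to the Duhamel formulation of \eqref{K-CC}, exploiting the frequency truncations $J_{\le K}$ to turn each nonlinearity into a bounded bilinear map on $\ch^s(\R^d)$ for $s > \fr{d}{2}$.

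First I would establish the basic nonlinear estimate: combining the Sobolev algebra property for $s > \fr{d}{2}$ with the frequency-localized bound $\|\na J_{\le K} f\|_{H^s} \le K \|f\|_{H^s}$, each nonlinearity satisfies
\[
\|J_{\le K}((\na \cd J_{\le K} w) J_{\le K} v)\|_{H^s} \les K \|v\|_{H^s}\|w\|_{H^s},
\]
and analogously for the other two terms in \eqref{K-CC}, with an implicit constant depending only on $s$ and $d$. Since $e^{it\si\Del}$ is an isometry on $H^s$ for every $\si \in \R$, the associated Duhamel map is then a contraction on a closed ball in $C([0, T]; \ch^s)$ as soon as $T$ is small enough depending on $K$ and $\|(u_0, v_0, w_0)\|_{\ch^s}$. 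This yields a unique fixed point and Lipschitz dependence on the data; uniqueness in the full class $C([0, T]; \ch^s)$ follows from a Gronwall argument on the difference of two solutions, using the same bilinear estimate.

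For the second assertion I would observe that any $\ch^s$-solution of \eqref{K-CC} remains Fourier-localized in $\{|\xi| \le K\}$: the initial data $(J_{\le K} u_0, J_{\le K} v_0, J_{\le K} w_0)$ and every nonlinear output have Fourier support in $B(0, K)$, and the free propagators preserve Fourier supports, so applying $J_{>K}$ to each equation shows that $J_{>K} u_K$ solves the free Schr\"odinger equation with zero data, and hence vanishes; the same is true for $v_K, w_K$. Consequently, for any $s' \in \R$,
\[
\|u_K(t)\|_{H^{s'}} \le (1 + K^2)^{(s' - s)_+/2} \|u_K(t)\|_{H^s},
\]
which upgrades the solution to $C([0, T]; \ch^{s'})$.

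I do not anticipate a substantive obstacle: once the $J_{\le K}$ cutoff is in place, the system reduces to an essentially bilinear ODE on the Banach space of $\ch^s$-functions with Fourier support in $B(0, K)$, where all Sobolev norms are comparable up to $K$-dependent constants. The only bookkeeping required is to track the $K$-dependence of the local time of existence; this is harmless for the present qualitative lemma but will reappear when one needs estimates uniform in $K$ in later sections of the paper.
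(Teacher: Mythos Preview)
Your proposal is correct and follows essentially the same route as the paper: Duhamel formulation, a product estimate in $H^s$ combined with the gain of one factor of $K$ from $\na J_{\le K}$, and a contraction argument with $T$ small depending on $K$ and the data. The only cosmetic difference is that the paper uses $\|fg\|_{H^{s-1}}\les\|f\|_{H^{s-1}}\|g\|_{H^s}$ (putting the derivative on the $H^{s-1}$ factor and recovering $H^s$ via the outer $J_{\le K}$) rather than the full algebra property, and for the smoothness it simply invokes ``the Duhamel formula and the presence of $J_{\le K}$'' without spelling out the Fourier-support argument you give.
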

\begin{proof}

This lemma is shown by the standard iteration argument.
Thus, we give the outline of the proof.
We express \eqref{K-CC} as the corresponding integral system
\begin{gather*}
u_{K}(t)=e^{it\al\Del}J_{\le K}u_0 +i\int_0^t e^{i(t-t')\al\Del}J_{\le K}((\na \cd w_{K}(t'))v_{K}(t'))\,dt',\\
v_{K}(t)=e^{it\be\Del}J_{\le K}v_0 +i\int_0^t e^{i(t-t')\be\Del}J_{\le K}((\ove{\na \cd w_{K}(t')}) u_{K}(t'))\,dt', \\
w_{K}(t)=e^{it\ga\Del}J_{\le K}w_0 -i\int_0^t e^{i(t-t')\ga\Del}\na J_{\le K}(u_{K}(t')\cd \ove{v_{K}(t')})\,dt'.
\end{gather*}
Then, it follows from the inequalities $\|fg\|_{H^{s-1}}\les \|f\|_{H^{s-1}}\|g\|_{H^s}$ for $s>\fr{d}{2}$ and $\|\pa_{x_j} J_{\le K}f\|_{H^s}\les K\|f\|_{H^s}$ that
\begin{align*}
&\Bl\|\int_0^t e^{i(t-t')\al\Del}J_{\le K}((\na \cd J_{\le K}w_{K}(t')) J_{\le K}v_{K}(t'))\,dt'\Bl\|_{L_T^\infty H_x^s} \\
&\les HT\|w_{K}\|_{L_T^\infty H_x^s}\|v_{K}\|_{L_T^\infty H_x^s}.
\end{align*}
We have the estimate for $v, w$ in the same manner.
Thus, we have
\[
\|(u_{K}, v_{K}, w_{K})\|_{L_T^\infty \ch_x^s}
\les \|(u_0, v_0, w_0)\|_{\ch^{s}} + KT\|(u_{K}, v_{K}, w_{K})\|_{L_T^\infty \ch_x^s}^2.
\]
A similar argument yields the estimate for the difference.
By taking $T=T(\|(u_0, v_0, w_0)\|_{\ch^{s}}, K)>0$ sufficiently small, we can apply the Banach fixed-point theorem.
In particular, there exist $T_0>0$ and a solution $(u_{K}, v_{K}, w_{K})\in C([0, T_0]; \ch^s(\R^d))$ to \eqref{K-CC} satisfying
\begin{equation*}
\|(u_{K}, v_{K}, w_{K})\|_{L_t^\infty([0, T_0]; \ch^{s})}
\les \|(u_0, v_0, w_0)\|_{\ch^{s}}.
\end{equation*}
We omit the remaining part of the proof because it is a standard argument.
We note that the smoothness of the solution is obtained by the expression of the Duhamel formula and the presence of the operator $J_{\le K}$.
\end{proof}
\noindent
If $K<\infty$, then Lemma \ref{wp_K-CC} yields $(u_K, v_K, w_K)=(J_{\le K}u_K, J_{\le K}v_K, J_{\le K}w_K)$ and
\begin{equation}\label{K-CC Hs-sol}
\hi\{\hs{-1.5mm}
\begin{aligned}
u_{K}(t)& \hs{-0.5mm} = \hs{-0.5mm} J_{\le K}u_0 \hs{-0.5mm} + \hs{-0.5mm} i\al  \hs{-0.5mm} \int_0^t \Del u_K(t')\,dt' \hs{-0.5mm} + \hs{-0.5mm} i  \hs{-0.5mm} \int_0^tJ_{\le K}((\na \cd w_{K}(t')) v_{K}(t'))\,dt',\\
v_{K}(t)& \hs{-0.5mm} = \hs{-0.5mm} J_{\le K}v_0 \hs{-0.5mm} + \hs{-0.5mm} i\be  \hs{-0.5mm} \int_0^t \Del v_K(t')\,dt' \hs{-0.5mm} + \hs{-0.5mm} i \hs{-0.5mm} \int_0^t J_{\le K}((\ove{\na \cd w_{K}(t')}) u_{K}(t'))\,dt', \\
w_{K}(t)& \hs{-0.5mm} = \hs{-0.5mm} J_{\le K}w_0 \hs{-0.5mm} + \hs{-0.5mm} i\ga  \hs{-0.5mm} \int_0^t \Del w_K(t')\,dt' \hs{-0.5mm} - \hs{-0.5mm} i \hs{-0.5mm} \int_0^t J_{\le K}\na (u_{K}(t')\cd \ove{v_{K}(t')})\,dt'
\end{aligned}\mi.
\end{equation}
in $\ch^{s'}$ for any $s'\in \R$.

Fitst, we will show that the existence time of the solution to \eqref{K-CC} is independent of the approximation parameter $K$.
To prove this, we apply the short-time Fourier restriction norm method.
It is known that the following linear estimate holds.
\begin{lem}\label{lin}
Let $s\in \R$, $T\in (0, 1]$, and $\si\in \R\sm \{0\}$.
Suppose that $u\in C_TH^s\cap C_T^1 H^{s-2}$ and $f\in C_TH^s$ satisfy $i\pa_tu + \si\Del u= f$.
Then, we have
\[
\|u\|_{F_{\si}^s(T)}
\les \|u\|_{E^s(T)}+\|f\|_{G_{\si}^s(T)}.
\]
\end{lem}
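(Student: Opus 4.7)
The plan is to reduce to a frequency-localized statement via Littlewood--Paley decomposition. By the definition of the norms $F_\si^s(T)$, $E^s(T)$, and $G_\si^s(T)$, it suffices to establish
\[
\|P_N u\|_{F_{N, \si}(T)} \les \|P_N u\|_{L_T^\infty L_x^2} + \|P_N f\|_{G_{N, \si}(T)}
\]
uniformly in $N\in 2^{\N_0}$, and then take the $\ell_N^2$-norm weighted by $N^s$. Fix such $N$ and abbreviate $u_N := P_N u$, $f_N := P_N f$, which satisfy $i\pa_t u_N + \si\Del u_N = f_N$ on $[0, T]$. The task is to produce an extension $\til u_N\in F_{N, \si, T}$ of $u_N|_{[0, T]}$ whose norm is controlled as required.

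For the construction, let $\theta\in C_c^\infty(\R)$ with $\theta\equiv 1$ on $[0, T]$ and $\ope{supp}\theta\subset [-2, 2]$, and choose an extension $\til f_N$ of $f_N|_{[0, T]}$ with $\|\til f_N\|_{G_{N, \si, T}}\le 2\|f_N\|_{G_{N, \si}(T)}$. Define
\[
\til u_N(t) := \theta(t)\hi[e^{it\si\Del}u_N(0) + i\int_0^t e^{i(t-s)\si\Del}\til f_N(s)\,ds\mi],
\]
which coincides with $u_N$ on $[0, T]$ by Duhamel's formula, belongs to $C_0(\R; L_x^2)$, and has space-time Fourier support in $\R\times \ci_N$. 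It then remains to bound $\|\til u_N\|_{F_{N, \si, T}}$, i.e., the supremum over $t_N\in\R$ of $\|\cf_{t, x}[\eta_0(NT^{-1}(t-t_N))\til u_N]\|_{X_{N, \si}}$.

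For the free contribution $\theta(t)e^{it\si\Del}u_N(0)$, the space-time Fourier transform is concentrated on the paraboloid $\tau+\si|\xi|^2 = 0$, and the time cutoffs $\theta$ and $\eta_0(NT^{-1}(t-t_N))$ redistribute modulations by a rapidly decaying factor on the scale $NT^{-1}$; applying \eqref{Xns4} together with \eqref{Xns2} yields a bound by $\|u_N(0)\|_{L_x^2}\le \|u_N\|_{L_T^\infty L_x^2}$ uniformly in $t_N$. For the Duhamel part, I would use the standard spectral identity that the Duhamel integral corresponds on the Fourier side to multiplication by $(\tau+\si|\xi|^2)^{-1}$ (up to an off-resonant phase term, which is harmless after the additional time truncation by $\eta_0(NT^{-1}(t-t_N))$). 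Decomposing dyadically in the modulation variable $\tau+\si|\xi|^2$ at the scale $NT^{-1}$ and applying \eqref{Xns3}--\eqref{Xns4} shows that the contribution of the Duhamel term to $\|\til u_N\|_{F_{N, \si, T}}$ is controlled by
\[
\sup_{t_N\in\R}\bl\|(\tau+\si|\xi|^2+iNT^{-1})^{-1}\cf_{t, x}[\eta_0(NT^{-1}(t-t_N))\til f_N]\br\|_{X_{N, \si}} \les \|\til f_N\|_{G_{N, \si, T}},
\]
which in turn is dominated by $\|f_N\|_{G_{N, \si}(T)}$.

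The main obstacle is the careful bookkeeping in the Duhamel part: one has to split into modulation ranges above and below the scale $NT^{-1}$ and verify in each regime that the symbol produced by the Duhamel integral, combined with the short-time cutoff, is comparable to the weight $(\tau+\si|\xi|^2+iNT^{-1})^{-1}$ defining the $G_{N, \si, T}$-norm. This comparison is exactly where the nontrivial estimates \eqref{Xns2}--\eqref{Xns4} enter. Since the argument is a standard application of the short-time Fourier restriction framework of Ionescu--Kenig--Tataru, I would simply invoke the detailed exposition given in the paragraph following (3.13) of \cite{Guo}.
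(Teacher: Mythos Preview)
Your sketch is correct and matches the paper's treatment: the paper gives no argument of its own and simply cites Proposition~4.1 in \cite{GO} and Proposition~3.2 in \cite{IKT}, which carry out exactly the frequency-localized Duhamel construction and $X_{N,\si}$ bookkeeping you outline (and which is also in \cite{Guo}). The only cosmetic difference is the choice of reference---\cite{GO} and \cite{IKT} rather than \cite{Guo}---but the underlying proof is the same short-time Fourier restriction linear estimate you describe.
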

\begin{proof}
See Proposition 4.1 in \cite{GO} and Proposition 3.2 in \cite{IKT}.
\end{proof}

To apply the short-time Fourier restriction norm method, we use the following nonlinear and energy estimates.
\begin{lem}\label{nonlin1}
Let $d\in \N$, $\til s\ge s>\max\{\fr{1}{2}, \fr{d-1}{2}\}$, and $\al, \be, \ga\in \R\sm \{0\}$.
Then, there exists $\theta \in (0, 1]$ depending only on $d$ such that
\begin{gather*}
\|J_{\le K}((\na\cd w)v)\|_{G_{\al}^{\til s}(T)}
\les T^{\theta}(\|w\|_{F_{\ga}^{s}(T)}\|v\|_{F_{\be}^{\til s}(T)} + \|w\|_{F_{\ga}^{\til s}(T)}\|v\|_{F_{\be}^{s}(T)}), \\
\|J_{\le K}((\ove{\na\cd w})u)\|_{G_{\be}^{\til s}(T)}
\les T^{\theta}(\|w\|_{F_{\ga}^{s}(T)}\|u\|_{F_{\al}^{\til s}(T)} + \|w\|_{F_{\ga}^{\til s}(T)}\|u\|_{F_{\al}^s(T)}), \\
\|\na J_{\le K}(u\cd \ove{v})\|_{G_{\ga}^{\til s}(T)}
\les T^{\theta}(\|u\|_{F_{\al}^{s}(T)}\|v\|_{F_{\be}^{\til s}(T)} + \|u\|_{F_{\al}^{\til s}(T)}\|v\|_{F_{\be}^{s}(T)})
\end{gather*}
for $K\in 2^{\N}\cup\{\infty\}$, $T\in (0, 1]$, $u\in F_{\al}^{\til s}(T)$, $v\in F_{\be}^{\til s}(T)$, and $w\in F_{\ga}^{\til s}(T)$.
In particular, for $K\in 2^{\N}\cup\{\infty\}$, let
\begin{equation}\label{N_H}
N_{K}(u, v, w):=(
J_{\le K}((\na\cd w)v), 
J_{\le K}((\ove{\na\cd w})u), 
\na J_{\le K}(u\cd \ove{v})
),
\end{equation}
then we have
\[
\|N_{K}(u, v, w)\|_{\bfg^{\til s}(T)}
\les T^{\theta}\|(u, v, w)\|_{\bff^{s}(T)}\|(u, v, w)\|_{\bff^{\til s}(T)}.
\]
\end{lem}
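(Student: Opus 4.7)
The three inequalities are structurally identical---each has the form ``derivative nonlinearity bounded in $G_\si^{\til s}$ by products of $F$-norms''---so I focus on the first, $\|J_{\le K}((\na\cd w)v)\|_{G_\al^{\til s}(T)}$; the others follow by the same argument after swapping the roles of inputs and dispersion parameters. The Fourier multiplier $J_{\le K}$ has symbol bounded by $1$ and is therefore harmless on every function space used, so I suppress it. My overall strategy is the standard one in the short-time Fourier restriction method: dyadically decompose the three inputs, reduce to a dyadic trilinear estimate, prove that estimate by time-localizing at the scale $T/M$ with $M=\max(N, N_2, N_3)$ and invoking a bilinear $L^2_{t,x}$ estimate for two Schr\"odinger groups, and finally sum in the dyadic parameters.

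Concretely, after unpacking the $G_{N,\al}(T)$-norm and applying the $S_M$ time-multiplier lemma, it suffices to bound, uniformly in the time-center and on each sub-interval of length $\sim T/M$, an oscillatory trilinear integral of the schematic form
\[
\int \ove{h_N}(\tau, \xi)\,(i\xi_3)\,\til w_{N_3}(\tau_3, \xi_3)\,\til v_{N_2}(\tau_2, \xi_2)\,d\mu
\]
on the convolution surface $\xi=\xi_2+\xi_3$, $\tau=\tau_2+\tau_3$, where $h_N, \til w_{N_3}, \til v_{N_2}$ are frequency-localized and controlled in appropriate $X_{N_i, \si_i}$-type norms. The frequency-support condition forces $N\les \max(N_2, N_3)$, naturally splitting the sum into the two high-low regimes $N\sim N_{\max}\gg N_{\min}$ and the high-high-to-low regime $N\ll N_2\sim N_3$. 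In each regime, by Cauchy-Schwarz and the transversality of the characteristic surfaces $\tau+\be|\xi|^2=0$ and $\tau+\ga|\xi|^2=0$ (guaranteed by $\be, \ga\ne 0$; no resonance condition is imposed in this lemma), one obtains a bilinear $L^2_{t,x}$-estimate with geometric gain $(N_{\min}/N_{\max})^{(d-1)/2}$ when the two inputs have distinct sizes. Combined with the inverse-modulation weight $(\tau+\al|\xi|^2+iNT^{-1})^{-1}$ in the definition of $G_{N,\al,T}$ and the sum over $\sim M/T$ sub-intervals covering $[0, T]$, this yields the desired net gain of $T^\theta$ for some $\theta\in (0, 1]$.

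The dyadic summation then closes exactly under the hypothesis $s>\max\{\fr12, \fr{d-1}{2}\}$: the bilinear gain $(N_{\min}/N_{\max})^{(d-1)/2}$ is precisely what absorbs the derivative factor $N_3$ in the high-high-to-low case, and the condition $s>\fr12$ is needed for the $\ell^2$-summation at the low output frequencies. A standard Schur/Cauchy-Schwarz argument in $N$ converts the single-output-frequency bound into the full inequality, and the asymmetry between $s$ and $\til s$ arises from assigning the higher regularity to whichever of the two inputs we Sobolev-embed. The stated trilinear bound for $N_K(u, v, w)$ in $\bfg^{\til s}(T)$ then follows by combining the three coordinate estimates. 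The main obstacle is the high-high-to-low regime, where the full strength of the bilinear Strichartz gain is needed to beat the single derivative landing on the high-frequency factor; this is the step driving the condition $s>\fr{d-1}{2}$ in the hypothesis.
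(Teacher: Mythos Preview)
Your overall architecture---dyadic decomposition, time localization at scale $T/N_1^*$, bilinear Strichartz for separated frequencies, then Schur summation---is the same as the paper's. But two regimes are not handled by what you wrote.

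First, your case split ``$N\sim N_{\max}\gg N_{\min}$ or $N\ll N_2\sim N_3$'' omits the diagonal case $N\sim N_2\sim N_3$. There the bilinear gain you invoke is $O(1)$ and buys nothing; the paper instead treats this regime with \emph{linear} Strichartz estimates (H\"older in $L_t^{8/d}L_x^4$ for $d=1,2$, and $L_t^4L_x^{2d/(d-1)}\times L_t^4L_x^{2d}$ with Sobolev embedding for $d\ge 3$), combined with the crude bound $\|f\|_{G_{N,\si,T}}\les T^{1/2}N^{-1/2}\sup_{t_N}\|\eta_0(NT^{-1}(\cdot-t_N))f\|_{L^2_{t,x}}$. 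Also, the bilinear gain from Lemma~\ref{bilin1} is $N_{\min}^{(d-1)/2}N_{\max}^{-1/2}$, not $(N_{\min}/N_{\max})^{(d-1)/2}$; these agree only when $d=2$.

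Second, and more seriously, in the high--high$\to$low regime your sentence ``the bilinear gain is precisely what absorbs the derivative factor'' glosses over a genuine obstruction. After duality one must sum over the output modulation $j_3$; the straight bilinear Strichartz estimate returns $2^{j_3/2}$ from the test function, which exactly cancels the $2^{-j_3/2}$ weight in $X_{N,\si}$ and leaves a logarithmically divergent sum. The paper's remedy is Corollary~\ref{bilin3}: interpolate the bilinear estimate with a Bernstein $L^\infty_{t,x}$ bound to obtain $2^{j_3(1-a)/2}$ for some $a\in(0,1)$, making the $j_3$-sum geometric at the cost of a small loss $(N_1^*)^{(d+1)a/2}$ in the frequency factors. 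This is where the condition $s>\tfrac12$ (not just $s>\tfrac{d-1}{2}$) actually enters when $d=1$; your explanation ``$s>\tfrac12$ is needed for the $\ell^2$-summation at low output frequencies'' does not capture this mechanism.
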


\begin{lem}\label{energy1}
Let $d\in \N$, $\al, \be, \ga\in \R\sm\{0\}$ satisfy $\al - \ga =0$ and $\be + \ga \neq 0$, and $\til s\ge s> \fr{1}{2}(d+1)$.
Then, there exists $\theta=\theta(d)\in (0, 1]$ such that
\begin{align*}
\|\bfu_K\|_{\bfe^{\til s}(T)}^2 
&\les (1+\|\bfu_K(0)\|_{\ch^{s}}^2)\|\bfu_K(0)\|_{\ch^{\til s}}^2 \\
&\quad + T^{\theta}(\|\bfu_K\|_{\bff^{s}(T)} + \|\bfu_K\|_{\bff^{s}(T)}^3)\|\bfu_K\|_{\bff^{\til s}(T)}^2
\end{align*}
for any $T\in (0, 1]$, $K\in 2^{\N}\cup\{\infty\}$, and a solution $\bfu_K=(u_{K}, v_{K}, w_{K})$ to \eqref{K-CC} which belongs to $C_T\ch^{\til s}\cap \bfe^{\til s}(T)\cap \bff^{\til s}(T)$.
\end{lem}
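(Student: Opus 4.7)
The plan is to implement a frequency-localized modified energy argument in the spirit of the short-time Fourier restriction norm method. First, decompose $\bfu_K = \sum_N P_N\bfu_K$ dyadically, reducing the claim to producing, for each dyadic $N$, a modified energy $E_N(t)$ that (i) is pointwise coercive,
\[
\|(P_N u_K, P_N v_K, P_N w_K)(t)\|_{L^2}^2 \;\les\; E_N(t),
\]
uniformly in the size of the data; (ii) is bounded at $t=0$ by $(1+\|\bfu_K(0)\|_{\ch^s}^2)\|P_N\bfu_K(0)\|_{L^2}^2$; and (iii) has time derivative $\fr{d}{dt}E_N$ dominated, after cancellations, by a multilinear expression controlled by the trilinear estimate of Section 5, with the factor $T^\theta$ produced by a final H\"older-in-time step. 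Summing $N^{2\til s}\sup_t E_N(t)$ then yields the claimed inequality.

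The modified energy is built in three layers. Starting from the $L^2$ block energy $E_N^{(0)}(t) := \|P_N u_K\|_{L^2}^2 + \|P_N v_K\|_{L^2}^2 + \|P_N w_K\|_{L^2}^2$, a direct computation via \eqref{K-CC} produces cubic boundary terms that would lose one derivative if estimated naively. Using the Colin-Colin symmetries of \cite{CC2004}, I cancel the top-order contribution by adding a trilinear correction $E_N^{(1)}$, a frequency-dependent variant of the correction used in \cite{HKO2022}. Because $\al-\ga=0$ destroys the coercivity of $E_N^{(0)}+E_N^{(1)}$ in the large-data regime — the reason \cite{HKO2022} needed smallness — I then add a further quartic correction $E_N^{(2)}$ chosen so that $E_N := E_N^{(0)}+E_N^{(1)}+E_N^{(2)}$ is coercive for arbitrary data, as indicated at \eqref{ene10}. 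With this modified energy in hand, $\fr{d}{dt}E_N$ reduces, after all cancellations, to a finite combination of multilinear expressions of degrees $3$ and $5$ in $\bfu_K$, each of a form that the Section 5 trilinear estimate controls with two factors in $\bff^{\til s}$ and the rest in $\bff^s$.

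Integrating from $0$ to $T$ and summing the dyadic pieces with weight $N^{2\til s}$, the cubic contributions produce the factor $T^\theta\|\bfu_K\|_{\bff^s(T)}\|\bfu_K\|_{\bff^{\til s}(T)}^2$, while the quintic contributions, coming from the nonlinear part of $\fr{d}{dt}E_N^{(2)}$, produce $T^\theta\|\bfu_K\|_{\bff^s(T)}^3\|\bfu_K\|_{\bff^{\til s}(T)}^2$. Since $P_N$ and $J_{\le K}$ commute as radial Fourier multipliers, the projector $J_{\le K}$ in \eqref{K-CC} contributes only harmless multilinear terms of the same shape.

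The main obstacle is the careful design of the quartic correction $E_N^{(2)}$: it must simultaneously (i) have size bounded by $\|\bfu_K(0)\|_{\ch^s}^2\|P_N\bfu_K\|_{L^2}^2$ at $t=0$, so that the initial-data factor $(1+\|\bfu_K(0)\|_{\ch^s}^2)$ arises; (ii) restore coercivity of the modified energy uniformly in $N$ and for arbitrary data in the degenerate regime $\al=\ga$; and (iii) have a time derivative whose residual multilinear structure is compatible with the Section 5 trilinear estimate. The resonance function $\al|\xi|^2-\ga|\xi_1|^2-\be|\xi_2|^2$ on $\xi=\xi_1+\xi_2$ degenerates along a codimension-one set when $\al=\ga$, and extracting enough decay requires exactly the bilinear Strichartz gain of $\fr{1}{2}$ derivative that the Section 5 trilinear estimate provides; bookkeeping these gains at the multiplier level, while simultaneously preserving coercivity, is the technical heart of the argument.
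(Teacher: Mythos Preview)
Your proposal is correct and follows essentially the same strategy as the paper: the trilinear correction $E_N^{(1)}$ is the paper's $M_N$ from \eqref{ene4}, and the coercivity-restoring ``quartic correction'' $E_N^{(2)}$ is precisely the multiplicative weight $N^{-1}\til C\|\bfu_K\|_{\ch^{s_0}}^2\|P_N\bfu_K\|_{\ch^0}^2$ appearing in \eqref{ene10}. One small imprecision: the time derivative of $E_N$ also produces degree-$4$ terms (from the nonlinear pieces of $\fr{d}{dt}M_N$, see \eqref{ene5}--\eqref{ene7}), not just degrees $3$ and $5$, but these are trivially absorbed since $\|\bfu\|^2\le\|\bfu\|+\|\bfu\|^3$; and the ``technical heart'' is less delicate than you suggest, since coercivity (Lemma~\ref{coer1}) follows from a one-line Young inequality once the weight is chosen, while the genuine work lies in the commutator estimates \eqref{pr_ene6}--\eqref{pr_ene8} and the trilinear bound of Proposition~\ref{trilin1}.
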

\noindent
We prove Lemmas \ref{nonlin1} and \ref{energy1} in Sections 4 and 6, respectively.

From Lemmas \ref{lin}--\ref{energy1}, we can obtain an a priori estimate and the existence of the solution to \eqref{K-CC} with the uniform existence time with respect to $K\in 2^{\N}$.
\begin{prop}\label{apri1}
Let $d\in \N$, $\al, \be, \ga\in \R\sm\{0\}$ satisfy $\al - \ga =0$ and $\be + \ga \neq 0$, $s> \fr{1}{2}(d+1)$, and $s\le \til s\le 2s+10$.
Then, for any $R>0$, there exists $T\in (0, 1]$ as follows: 
If $\bfu_0=(u_0, v_0, w_0) \in \ch^{2s+10}(\R^d)$ satisfies $\|\bfu_0\|_{\ch^s}\le R$, then for any $K\in 2^{\N}$, there exists a solution $\bfu_K=(u_{K}, v_{K}, w_{K})\in \cap_{s'\in \R} C([0, T]; \ch^{s'}(\R^d))$ of \eqref{K-CC}.
Moreover, $\bfu_K$ satisfies
\[
\|\bfu_K\|_{\bfe^{\til s}(T)} + \|\bfu_K\|_{\bff^{\til s}(T)}
\les (1+R)\|\bfu_0\|_{\ch^{\til s}},
\]
where the implicit constants are independent of $K$.
\end{prop}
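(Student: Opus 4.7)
The plan is to close a continuity-bootstrap argument built on the linear, nonlinear, and energy estimates, and then extend the smooth local solution supplied by Lemma \ref{wp_K-CC} to a $K$-independent time. Fix $K \in 2^{\N}$ and smooth initial data $\bfu_0 \in \ch^{2s+10}$ with $\|\bfu_0\|_{\ch^s} \le R$; Lemma \ref{wp_K-CC} furnishes a maximal smooth solution $\bfu_K \in \bigcap_{s' \in \R} C([0, T_K^\ast); \ch^{s'})$ to \eqref{K-CC}, and persistence of regularity reduces continuation past $T_K^\ast$ to an $L^\infty_t$-control of some subcritical Sobolev norm of $\bfu_K$. Setting $Y_{\til s}(T) := \|\bfu_K\|_{\bfe^{\til s}(T)} + \|\bfu_K\|_{\bff^{\til s}(T)}$, Lemma \ref{lin} combined with Lemma \ref{nonlin1} applied to $N_K(\bfu_K)$ yields
\[
\|\bfu_K\|_{\bff^{\til s}(T)} \le C\|\bfu_K\|_{\bfe^{\til s}(T)} + C T^\theta Y_s(T) Y_{\til s}(T),
\]
while Lemma \ref{energy1} gives
\[
\|\bfu_K\|_{\bfe^{\til s}(T)}^2 \le C(1+R^2)\|\bfu_0\|_{\ch^{\til s}}^2 + C T^\theta (Y_s(T) + Y_s(T)^3) Y_{\til s}(T)^2.
\]
Squaring the first, adding the second, and using $T \le 1$ to combine the $T^\theta$ and $T^{2\theta}$ contributions produces a closed inequality
\[
Y_{\til s}(T)^2 \le C_1(1+R^2)\|\bfu_0\|_{\ch^{\til s}}^2 + C_1 T^\theta (Y_s(T) + Y_s(T)^3) Y_{\til s}(T)^2.
\]

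Taking $\til s = s$ makes this a self-contained inequality for $Y_s(T)$. Smoothness of $\bfu_K$ renders $T \mapsto Y_s(T)$ continuous on $(0, T_K^\ast)$, and Lemma \ref{lin} together with the $T^\theta$-smallness of the nonlinear contribution forces $\limsup_{T\to 0^+} Y_s(T) \les \|\bfu_0\|_{\ch^s} \le R$. A standard continuity/bootstrap argument then produces $T_0 = T_0(R) \in (0, 1]$ with $Y_s(T) \le C_2(1+R)R$ on $(0, \min(T_0, T_K^\ast))$. Substituting this bound into the closed inequality at general $\til s \in [s, 2s+10]$ and, if necessary, shrinking $T_0$ in $R$ alone absorbs the $T^\theta$-term and yields the tame bound $Y_{\til s}(T) \le C_3(1+R)\|\bfu_0\|_{\ch^{\til s}}$ on the same interval.

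It remains to rule out blow-up before $T_0$: the embedding $\bfe^s(T) \hookrightarrow C_T\ch^s$ together with $Y_s(T) \le C(R)$ gives $\|\bfu_K\|_{L^\infty([0, \min(T_0, T_K^\ast)); \ch^s)} \le C(R)$, and since $s > d/2$ this prevents blow-up of the local solution of Lemma \ref{wp_K-CC}, forcing $T_K^\ast \ge T_0$. The main obstacle will be the right-limit analysis of $T \mapsto Y_s(T)$ needed to initialize the bootstrap, namely showing that $\limsup_{T \to 0^+} Y_s(T)$ is controlled by $\|\bfu_0\|_{\ch^s}$ for the smooth approximate solution; this requires a careful inspection of the time-localized definitions of $F_{N,\sigma}(T)$ and $G_{N,\sigma}(T)$ from subsection 2.2 combined with Lemma \ref{lin} applied on a vanishingly small window. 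Once that continuity is granted, the remaining ingredients are direct invocations of Lemmas \ref{lin}--\ref{energy1}.
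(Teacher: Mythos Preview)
Your argument is essentially the paper's: close a continuity bootstrap built on Lemmas \ref{lin}--\ref{energy1}, first at regularity $s$ and then at $\til s$, and use the resulting a priori bound to continue the smooth local solution from Lemma \ref{wp_K-CC} to a $K$-independent time. The one tactical difference is that the paper bootstraps the quantity $X_K^{\til s}(T) := \|\bfu_K\|_{\bfe^{\til s}(T)} + \|N_K(\bfu_K)\|_{\bfg^{\til s}(T)}$ rather than your $Y_{\til s} = \bfe^{\til s} + \bff^{\til s}$, precisely so that the continuity in $T$ and the small-$T$ limit you correctly flag as the ``main obstacle'' are delivered off-the-shelf by Lemma \ref{Xs_conti} (which treats the $\bfe$ and $\bfg$ norms but says nothing about $\bff$); the $\bff^{\til s}$-bound is then recovered a posteriori from $X_K^{\til s}$ via Lemma \ref{lin}. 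The paper also extends the solution by explicit iterated restarts from Lemma \ref{wp_K-CC} rather than invoking a maximal-time blow-up criterion, but the two mechanisms are equivalent here.
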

\noindent
To prove this proposition, we confirm the continuity of the $\bfe^s(T)$- and $\bfg^s(T)$- norms.
\begin{lem}\label{Xs_conti}
Let $s'>\fr{d}{2}-1$, $K\in 2^{\N}\cup\{\infty\}$, $T_0\in (0, 1]$, and $(u, v, w) \in C([0,T_0] ; \ch^{s'+5}(\mathbb{\R}^d))$.
For $0< T\le T_0$, we define 
\begin{equation}\label{Xs}
X_K^{s'}(T):= \|(u, v, w)\|_{\bfe^{s'}(T)} + \|N_{K}(u, v, w)\|_{\bfg^{s'}(T)}.
\end{equation}
Then, $X_K^{s'}$ is non-decreasing and continuous in $T \in(0,T_0]$.
Also, we have
\[
\lim _{T \searrow 0} X_K^{s'}(T)\les \|(u(0), v(0), w(0))\|_{\ch^{s'}}.
\]
\end{lem}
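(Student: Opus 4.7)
The plan is to decompose $X_K^{s'}(T) = A(T) + B(T)$ where $A(T) := \|(u,v,w)\|_{\bfe^{s'}(T)}$ and $B(T) := \|N_K(u,v,w)\|_{\bfg^{s'}(T)}$, and to establish monotonicity, continuity on $(0, T_0]$, and the limiting behavior at $T=0^+$ separately for each term. Throughout, the regularity slack afforded by the hypothesis $(u,v,w)\in C_{T_0}\ch^{s'+5}$ will let me use dominated convergence freely in the defining $\ell^2_N$-sums, which is crucial for passing from single-frequency statements to the full norms.

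For $A(T)$, I would first argue at the level of a single dyadic frequency: $\|P_N u\|_{L_T^\infty L_x^2}$ is manifestly non-decreasing in $T$, is continuous by the $C_{T_0}L_x^2$-continuity of $P_N u$, and satisfies $\|P_N u\|_{L_T^\infty L_x^2} \to \|P_N u(0)\|_{L_x^2}$ as $T \searrow 0$. The $\ell^2_N$-summable majorant $N^{s'}\|P_N u\|_{L_{T_0}^\infty L_x^2} \les N^{-5}\cdot N^{s'+5}\|u\|_{L_{T_0}^\infty H^{s'+5}}$ then lets dominated convergence transfer all three properties to $A$, yielding in particular $\lim_{T\searrow 0}A(T) = \|(u(0),v(0),w(0))\|_{\ch^{s'}}$.

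For $B(T)$, the key step is a crude bound of the form $\|P_N f\|_{G_{N,\si}(T)} \les (T/N)\|P_N f\|_{L_T^\infty L_x^2}$, which I plan to derive by choosing an extension time-supported on a window of length $\sim T/N$ around $t_N$, exploiting $|(\tau+\si|\xi|^2+iNT^{-1})^{-1}| \les T/N$, and handling the sum over modulation levels via Cauchy--Schwarz (the relevant weight $\sum_j 2^j \min(2^{-2j},(T/N)^2)\sim T/N$ is finite). Applied to $f = N_K(u,v,w)$ together with the standard product estimate $\|N_K(u,v,w)\|_{\ch^{s'+4}} \les \|(u,v,w)\|_{\ch^{s'+5}}^2$ (valid since $s'+4>d/2$ under the hypothesis $s'>d/2-1$), this produces $B(T) \les T\cdot \|(u,v,w)\|_{L_{T_0}^\infty \ch^{s'+5}}^2$, which vanishes as $T \searrow 0$ and combines with the $A$-analysis to give the claimed limit bound. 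For monotonicity of $B$ on $(0,T_0]$, I would observe that a near-optimal extension for $\|P_N f\|_{G_{N,\si}(T_2)}$ is automatically admissible for $\|P_N f\|_{G_{N,\si}(T_1)}$ when $T_1<T_2$, and that the pointwise inequality $\|\til f\|_{G_{N,\si,T_1}} \le \|\til f\|_{G_{N,\si,T_2}}$ holds because both the cutoff width $\sim T/N$ and the resolvent weight $T/N$ shrink as $T$ decreases; continuity of $\|\til f\|_{G_{N,\si,T}}$ in $T$ for each fixed $\til f$ then lifts to continuity of $B$ via dominated convergence.

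The main obstacle is the $T$-dependence embedded in the $G_{N,\si,T}$-norm through the cutoff $\eta_0(NT^{-1}(\cdot-t_N))$ and the resolvent weight $(\tau+\si|\xi|^2+iNT^{-1})^{-1}$, which makes monotonicity and continuity delicate if one insists on sharp single-frequency bounds. I plan to sidestep this by spending the five derivatives of regularity slack afforded by the hypothesis: losses polynomial in $N$ at each dyadic level are absorbed by the $N^{-5}$ factor in the majorant, reducing all the continuity and monotonicity questions to elementary statements about the $T$-dependence of Schwartz cutoffs and resolvents at each fixed $N$.
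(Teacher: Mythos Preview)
The paper does not give its own argument here; it simply refers to Lemma~8.1 in \cite{GO} and Lemma~4.2 in \cite{IKT}. Your plan is in the same spirit as those references, and the treatment of the energy piece $A(T)$, the crude bound $\|P_N f\|_{G_{N,\si}(T)}\les (T/N)\|P_N f\|_{L_T^\infty L_x^2}$, and the conclusion $\lim_{T\searrow 0} B(T)=0$ are all correct.

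The monotonicity argument for $B$, however, has a real gap. You claim $\|\til f\|_{G_{N,\si,T_1}}\le \|\til f\|_{G_{N,\si,T_2}}$ for $T_1<T_2$ because ``the cutoff width and the resolvent weight shrink as $T$ decreases.'' The resolvent half is fine: $|(\tau+\si|\xi|^2+iNT_1^{-1})^{-1}|\le |(\tau+\si|\xi|^2+iNT_2^{-1})^{-1}|$ pointwise, and the $X_{N,\si}$-norm is monotone under pointwise domination of its argument. But a \emph{narrower} time cutoff has a \emph{wider} Fourier transform, so replacing $\eta_0(NT_2^{-1}(\cdot-t_N))$ by $\eta_0(NT_1^{-1}(\cdot-t_N))$ spreads $\cf[\eta_0(\cdot)\til f]$ to higher modulation; since the $X_{N,\si}$-norm weights high modulation by $2^{j/2}$, this effect works \emph{against} you and is not neutralized by your heuristic. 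In the cited works this step is handled through the time-localization stability estimates of the type \eqref{Xns3}--\eqref{Xns4}, and one must check carefully how these interact with the resolvent weight to obtain the stated inequality rather than merely a bound up to an absolute constant. A related subtlety affects your continuity argument: continuity of $T\mapsto\|\til f\|_{G_{N,\si,T}}$ for each fixed extension yields only upper semicontinuity of the infimum $\|P_N f\|_{G_{N,\si}(T)}$; right-continuity requires an additional argument (for instance, extending a near-optimal $\til f$ for $T$ across the short interval $[T,T']$ and invoking your crude bound on that piece, which is essentially how \cite{GO} proceeds).
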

\begin{proof}
See Lemma 8.1 in \cite{GO} and Lemma 4.2 in \cite{IKT}.
\end{proof}

\begin{proof}[Proof of Prposition \ref{apri1}]
Let $R>0$, $K\in 2^{\N}$, and $\bfu_0=(u_0, v_0, w_0)\in \ch^{2s+10}$ satisfy $\|\bfu_0\|_{\ch^s}\le R$.
From Lemma \ref{wp_K-CC}, there exist $T_0>0$ and a solution $\bfu_K=(u_{K}, v_{K}, w_{K})\in \cap_{s''\in \R} C([0, T_0]; \ch^{s''})$ of \eqref{K-CC}.
We set $X_K^{\til s}(T')$ by \eqref{Xs} with $s'=\til s$.
From Lemma \ref{Xs_conti}, $X_K^{\til s}$ is continuous on $[0, T_0]$ and $\lim_{T'\searrow 0}X_K^{\til s}(T')\les \|\bfu_0\|_{\ch^{\til s}}$.

First, we prove the case of $\til s = s$.
Then, it holds from Lemma \ref{energy1} that there exists $\til \theta\in (0, 1]$ such that
\begin{equation}\label{pr_apri1_2}
\|\bfu_K\|_{\bfe^{s}(T')}^2 
\les  (1+ \|\bfu_0\|_{\ch^{s}}^2)\|\bfu_0\|_{\ch^{s}}^2 + T'^{\til \theta}(\|\bfu_K\|_{\bff^{s}(T')} + \|\bfu_K\|_{\bff^{s}(T')}^3)\|\bfu_K\|_{\bff^{s}(T')}^2
\end{equation}
for $T'\in (0, T_0]$.
Thus, from \eqref{pr_apri1_2} and Lemmas \ref{lin} and \ref{nonlin1}, there exist $\theta\in (0, 1]$ and $C_2>0$ such that
\begin{equation}\label{pr_apri1_3}
X_K^{s}(T')
\le C_2( (1+ R)\|\bfu_0\|_{\ch^s} + T'^{\theta}(X_K^{s}(T')^{\fr{1}{2}} + X_K^{s}(T')^{\fr{3}{2}}))X_K^{s}(T')
\end{equation}
for $T'\in (0, T_0]$.
Since $X_K^{s}$ is continuous and satisfies $\lim_{T'\searrow 0}X_K^{s}(T')\le C\|\bfu_0\|_{\ch^s}$ for some $C>0$, by taking $C_2\ge C$ if necessary, a continuity argument yields $X_K^{s}(T')\le 3C_2 (1+R)\|\bfu_0\|_{\ch^s}=: A$ for
\[
T'\in (0, \min(T_0, (3C_2(A^{\fr{1}{2}}+A^{\fr{3}{2}}))^{-\fr{1}{\theta}})].
\]
When $T_0\le (3C_2(A^{\fr{1}{2}}+A^{\fr{3}{2}}))^{-\fr{1}{\theta}}=:T$, we have
\begin{equation}\label{pr_apri1_4}
\|\bfu_K(T_0)\|_{\ch^s} 
\le \til C \|\bfu_K\|_{\bfe^{s}(T_0)} 
\le 2\til CC_2 \|\bfu_0\|_{\ch^s}. 
\end{equation}
for some $\til C> 0$.
Thus,  from Lemma \ref{wp_K-CC}, there exist $T_1>0$ and a solution $\bfu_K\in \cap_{s'\in \R} C([0, T_0 + T_1]; \ch^{s'})$ of \eqref{K-CC} and we obtain \eqref{pr_apri1_3} for $T'\in (0, T_0 + T_1]$.
Hence, a continuity argument yields $X_K^{s}(T')\le  3C_2\|\bfu_0\|_{\ch^s}$ for $T'\in (0, \min(T_0 + T_1, T)]$, and thus we obtain \eqref{pr_apri1_4} with $T_0$ changed to $T_0 + T_1$ when $T_0+T_1\le T$.
Let $k\in \N$ satisfy $T_0 + kT_1\ge T$.
Repeating the above argument for $k$ times, we finish the proof.
We note that Lemma \ref{lin} yields
\begin{equation}\label{pr_apri1_5}
\|\bfu_K\|_{\bff^{s}(T)}
\les X_K^s(T)
\les (1+R)\|\bfu_0\|_{\ch^s}.
\end{equation}

Next, we consider the case of $s< \til s\le 2s +10$.
Lemmas \ref{lin}--\ref{energy1} and \eqref{pr_apri1_5} yield
\[
X_K^{\til s}(T')
\les (1+R)\|\bfu_0\|_{\ch^{\til s}} + T'^{\theta}(R+R^3)X_K^{\til s}(T')
\]
for $T'\in (0, T]$.
Since $X_K^{\til s}(T')<\infty$, we obtain $X_K^{\til s}(T')\les (1+R)\|\bfu_0\|_{\ch^{\til s}}$ if $T'\le T$ and $T'\les (R+R^3)^{-\fr{1}{\theta}}$.
This yields the desired bound.
\end{proof}

Next, we consider the estimates of the difference of the solutions to \eqref{K-CC} in $\ch^0$.
The nonlinear estimate in $\ch^0$ is as follows:
\begin{lem}\label{nonlin2}
Let $d\in\N$, $\al, \be, \ga\in \R\sm \{0\}$, and $s>\max\{1, \fr{d-1}{2}\}$.
Then, there exists $\theta=\theta(d) \in (0, 1]$ such that
\begin{gather*}
\|J_{\le K}((\na\cd w)v)\|_{G_{\al}^{0}(T)}
\les T^{\theta}\min \{\|w\|_{F_{\ga}^s(T)}\|v\|_{F_{\be}^{0}(T)}, \|w\|_{F_{\ga}^{0}(T)}\|v\|_{F_{\be}^s(T)}\}, \\
\|J_{\le K}(( \ove{\na\cd w})u)\|_{G_{\be}^{0}(T)}
\les T^{\theta}\min\{\|w\|_{F_{\ga}^s(T)}\|u\|_{F_{\al}^{0}(T)}, \|w\|_{F_{\ga}^{0}(T)}\|u\|_{F_{\al}^s(T)}\}, \\
\|\na J_{\le K}(u\cd \ove{v})\|_{G_{\ga}^{0}(T)}
\les T^{\theta}\min\{\|u\|_{F_{\al}^s(T)}\|v\|_{F_{\be}^0(T)}, \|u\|_{F_{\al}^0(T)}\|v\|_{F_{\be}^{s}(T)}\}
\end{gather*}
for $K\in 2^{\N}\cup\{\infty\}$, $T\in (0, 1]$, $u\in F_{\al}^{s}(T)$, $v\in F_{\be}^{s}(T)$, and $w\in F_{\ga}^{s}(T)$.
\end{lem}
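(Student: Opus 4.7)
The strategy parallels the proof of Lemma \ref{nonlin1} carried out in Section 4; only the distribution of Sobolev weights in the dyadic summation changes, and this asymmetry is what produces the $\min$ on the right-hand side.

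To fix ideas, consider the first nonlinearity $(\na\cd w)v$. I would Littlewood--Paley decompose all three frequencies,
\[
P_{N_0}((\na\cd w)v) = \sum_{N_1, N_2} P_{N_0}((\na\cd P_{N_2} w) P_{N_1} v),
\]
with the convolution constraint forcing the two largest of $\{N_0, N_1, N_2\}$ to be comparable. From the work of Section 4 I would extract, for each dyadic block, a short-time bilinear estimate of the form
\[
\|P_{N_0}((\na\cd P_{N_2} w) P_{N_1} v)\|_{G_{\al, N_0}(T)}
\les T^{\theta} C(N_0, N_1, N_2) \|P_{N_2} w\|_{F_{\ga, N_2}(T)} \|P_{N_1} v\|_{F_{\be, N_1}(T)},
\]
where $C(N_0, N_1, N_2)$ absorbs the derivative of size $\sim N_{\max}$ (from $\na\cd w$) together with Bernstein-type losses coming from the geometry of the frequency interaction.

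Next, to obtain the first bound in the $\min$, I would square-sum in $N_0$ to reproduce $\|\cdot\|_{G_\al^0(T)}$, attach the $N_2^{s}$-weight to the $w$-factor so as to reconstruct $\|w\|_{F_\ga^s(T)}$, and then sum the remaining $N_1$ in $\ell^2$ without a Sobolev weight to reconstruct $\|v\|_{F_\be^0(T)}$. The fact that the two largest of $N_0, N_1, N_2$ are comparable lets me re-express any residual power of $N_1$ (or $N_0$) coming from $C(N_0,N_1,N_2)$ as a comparable power of $N_2$, which is then absorbed into the $s$-weight. The second bound in the $\min$ follows by swapping the roles of $w$ and $v$ in the weight placement. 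The operator $J_{\le K}$ acts as a uniformly bounded Fourier multiplier on all the spaces in play and contributes no $K$-dependence. The other two nonlinearities are treated identically, with the outer $\na$ in the third nonlinearity playing the same role as the derivative falling on $w$ in the first two.

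The main obstacle is the high-high to low regime $N_0 \ll N_1 \sim N_2$, where the output has low frequency but the derivative lands on a high-frequency factor, so one must trade the derivative off the factor with no Sobolev weight and onto the factor carrying the weight. This trade is where the hypothesis $s > \max\{1, \fr{d-1}{2}\}$ enters: the $1$ offsets the derivative loss, while the $\fr{d-1}{2}$ absorbs the Bernstein loss that appears in small dimensions. Once the dyadic block estimate is in hand, the remainder of the proof is a routine Cauchy--Schwarz summation exercise.
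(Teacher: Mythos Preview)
Your proposal is correct and follows essentially the same approach as the paper: the paper reuses the dyadic block estimates \eqref{Ia2}, \eqref{Ib2}, \eqref{IIi2}, \eqref{IIii2} already established in the proof of Lemma \ref{nonlin1}, and merely checks that in each interaction regime the resulting factor is bounded by $C_{\mb{N}}':=\min\{N_1^{s},N_2^{s}\}(N_3^{*})^{-\del}$, which is exactly the weight redistribution you describe. One small correction: the $(d-1)/2$ threshold is not a Bernstein loss in \emph{small} dimensions but rather the Strichartz/bilinear-Strichartz loss that is active for $d\ge 3$ (and in the High$\times$Low$\to$High case for all $d$), while the threshold $1$ comes exclusively from the High$\times$High$\to$Low case (II-ii), where the derivative factor $N_1^{*}\sim N_1\sim N_2$ must be absorbed by $\min\{N_1^{s},N_2^{s}\}$.
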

\noindent
The above lemma is proved in Section 4.
We also need the following estimate in the case of $H_1\neq H_2$.
This is immediately shown by Remark \ref{rem_del} and Lemma \ref{nonlin1}.
Thus, we omit the proof.
\begin{lem}\label{nonlin3}
Let $d\in\N$, $\al, \be, \ga\in \R\sm \{0\}$, and $s>\max\{\fr{1}{2}, \fr{d-1}{2}\}$.
Then, there exists $\theta=\theta(d) \in (0, 1]$ such that
\begin{gather*}
\|J_{(K_1, K_2]}((\na \cd w_1) v_1)\|_{G_{\al}^{0}(T)}
\les K_1^{-s}T^{\theta}\|v_1\|_{F_{\be}^{s}(T)}\|w_1\|_{F_{\ga}^{s}(T)}, \\
\|J_{(K_1, K_2]}((\ove{\na \cd w_1}) u_1)\|_{G_{\be}^{0}(T)}
\les K_1^{-s}T^{\theta}\|u_1\|_{F_{\al}^{s}(T)}\|w_1\|_{F_{\ga}^{s}(T)}, \\
\|\na J_{(K_1, K_2]}(u_1\cd \ove{v_1}))\|_{G_{\ga}^{0}(T)}
\les K_1^{-s}T^{\theta}\|u_1\|_{F_{\al}^{s}(T)}\|v_1\|_{F_{\be}^{s}(T)}
\end{gather*}
for $K_1\le K_2\in 2^{\N}$, $T\in (0, 1]$, $u\in F_{\al}^{s}(T)$, $v\in F_{\be}^{s}(T)$, and $w\in F_{\ga}^{s}(T)$.
\end{lem}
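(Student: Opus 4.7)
The plan is to decouple the frequency-localization gain from the bilinear estimate, reducing each of the three inequalities to a one-line consequence of Remark \ref{rem_del} combined with Lemma \ref{nonlin1}. The three inequalities are entirely parallel, so I would carry out the argument in detail only for the first and indicate that the others follow identically from the second and third bounds of Lemma \ref{nonlin1}.

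First, I would establish the auxiliary bound
\[
\|J_{(K_1, K_2]} f\|_{G_{\si}^{0}(T)} \les K_1^{-s} \|f\|_{G_{\si}^{s}(T)}
\]
for any $\si \in \R\sm\{0\}$ and $s > 0$, which is essentially Remark \ref{rem_del} rephrased on the $G^{0}$--$G^{s}$ scale. The point is that $\cf_x[J_{(K_1, K_2]} g]$ is supported in $\{K_1 < |\xi| \le K_2\}$, so only dyadic blocks $P_N g$ with $N \gtr K_1$ contribute to the defining $\ell^{2}_N$-sum of the left-hand side; for such $N$ the trivial weight inequality $1 \le K_1^{-s} N^{s}$ together with the boundedness of $J_{(K_1, K_2]}$ on each $G_{N, \si, T}$, supplied by Remark \ref{rem_del}, yields the claim after taking the $\ell^{2}_N$-norm.

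Next, I would apply Lemma \ref{nonlin1} with $\til s = s$ and $K = \infty$ (so that $J_{\le K}$ is the identity) to the unrestricted product $(\na\cd w_1) v_1$, obtaining
\[
\|(\na\cd w_1) v_1\|_{G_{\al}^{s}(T)} \les T^{\theta}\|w_1\|_{F_{\ga}^{s}(T)}\|v_1\|_{F_{\be}^{s}(T)},
\]
where the two symmetric terms on the right of Lemma \ref{nonlin1} collapse into one. Chaining this with the auxiliary bound gives the first inequality of Lemma \ref{nonlin3}. The estimates for $(\ove{\na\cd w_1}) u_1$ and $\na(u_1\cd \ove{v_1})$ follow identically from the remaining two bounds of Lemma \ref{nonlin1}; in the third, I would use that $\na$ and $J_{(K_1, K_2]}$ commute as Fourier multipliers, so that the outer $\na$ can be pulled inside and absorbed into the $G_{\ga}^{s}(T)$-norm before applying the auxiliary bound.

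There is no real obstacle here: the entire argument rests on the trivial observation that the Fourier support of $J_{(K_1, K_2]}$ lies in $\{|\xi| > K_1\}$, which costs the factor $K_1^{-s}$ when passing from the $G^{s}$-norm to the $G^{0}$-norm, while all the bilinear analysis is already carried out in Lemma \ref{nonlin1}. This is precisely why the paper omits the proof.
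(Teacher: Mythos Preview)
Your proof is correct and follows essentially the same route the paper indicates: the auxiliary bound $\|J_{(K_1,K_2]}f\|_{G_\si^0(T)}\les K_1^{-s}\|f\|_{G_\si^s(T)}$ is precisely the content of Remark~\ref{rem_del} (phrased there as $\||\na|^{-s}J_{(K_1,K_2]}u\|_{G_{N,\si}(T)}\les K_1^{-s}\|u\|_{G_{N,\si}(T)}$ applied dyadically), and chaining it with Lemma~\ref{nonlin1} at $\til s=s$, $K=\infty$ gives the result.
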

\noindent
Next, we consider the energy estimate for the difference of the solutions to \eqref{K-CC} and \eqref{CC}.
\begin{lem}\label{dene1}
Let $d\in \N$, $\al, \be, \ga\in \R\sm\{0\}$ satisfy $\al - \ga =0$ and $\be + \ga \neq 0$, and $s> \fr{1}{2}(d+1)$.
Then, there exists $\theta=\theta(d)\in (0, 1]$ such that
\begin{align*}
&\|\bfu_1-\bfu_2\|_{\bfe^{0}(T)}^2 \nt\\
&\les (1+\|\bfu_1(0)\|_{\ch^{s}}^2 + \|\bfu_2(0)\|_{\ch^{s}}^2)\|\bfu_1(0)-\bfu_2(0)\|_{\ch^0}^2 \\
& + T^{\theta}(\|\bfu_1\|_{\bff^{s}(T)} + \|\bfu_2\|_{\bff^{s}(T)} + \|\bfu_1\|_{\bff^{s}(T)}^3 + \|\bfu_2\|_{\bff^{s}(T)}^3)\|\bfu_1-\bfu_2\|_{\bff^{0}(T)}^2  \\
& + K_1^{-(s-\fr{1}{2}(d+1))}T^{\theta}\|\bfu_1\|_{\bff^{s}(T)}^2(1+ \|\bfu_1\|_{\bff^{s}(T)}^2 + \|\bfu_2\|_{\bff^{s}(T)}^2)\|\bfu_1-\bfu_2\|_{\bff^{0}(T)}
\end{align*}
for $T\in (0, 1]$, $j=1,2$ and $\bfu_j=(u_j, v_j, w_j)\in C_T\ch^s\cap \bfe^{s}(T)\cap \bff^s(T)$ which is a solution to \eqref{K-CC} with $K = K_j\in2^{\N}\cup\{\infty\}$ and $K_1\le K_2$.
\end{lem}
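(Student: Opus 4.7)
The plan is to adapt the modified energy argument that proves Lemma~\ref{energy1} to the difference $\delta\bfu := \bfu_1 - \bfu_2$, measured now in $\ch^0$ instead of $\ch^{\til s}$. Subtracting the approximation equations in \eqref{K-CC} for $K = K_1$ and $K = K_2$ and writing $\delta u = u_1 - u_2$, etc., the difference equation for $\delta u$ decomposes as
\begin{equation*}
(i\pt+\al\Del)\delta u = \mc N_{\rm diff} + \mc N_{\rm cut},
\end{equation*}
where $\mc N_{\rm diff}$ collects the contributions that are linear in $\delta\bfu$ (with all cutoffs fixed at the common level $J_{\le K_1}$ after a suitable add-and-subtract) and $\mc N_{\rm cut}$ gathers the remainder pieces carrying a factor $J_{(K_1,K_2]}$ applied to one input of $\bfu_1$; analogous decompositions hold for the $\delta v$ and $\delta w$ equations. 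Observe that $\mc N_{\rm cut}$ vanishes when $K_1 = K_2$ (in particular when $K_1=K_2=\infty$), consistent with the disappearance of the last line of the target estimate in that limit.

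Next I would introduce a modified energy $\cm^0(\delta\bfu)$ whose leading part is $\|\delta\bfu\|_{\ch^0}^2$ and whose frequency-localized cubic correction terms mirror those of Section~6, but with $\delta\bfu$ playing the role of the top-regularity field and $\bfu_1, \bfu_2$ filling the remaining slots. Since every correction term is at least cubic and carries at least one factor of a reference state $\bfu_j$ at regularity $\ch^s$, the hypothesis $s > \fr{1}{2}(d+1)$ together with Sobolev embedding yields
\begin{equation*}
\bl|\cm^0(\delta\bfu) - \|\delta\bfu\|_{\ch^0}^2\br| \les (\|\bfu_1\|_{\ch^s}+\|\bfu_2\|_{\ch^s})\|\delta\bfu\|_{\ch^0}^2,
\end{equation*}
so $\cm^0$ remains coercive and will ultimately control $\|\delta\bfu\|_{\bfe^0(T)}^2$ with the prefactor $(1+\|\bfu_1(0)\|_{\ch^s}^2+\|\bfu_2(0)\|_{\ch^s}^2)$ appearing on the first line of the target estimate, which arises from evaluating $\cm^0$ at $t=0$ together with the quartic self-interaction of the correction terms.

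Differentiating $\cm^0(\delta\bfu)$ in time and substituting the difference equations splits the resulting multilinear form along the decomposition $\mc N_{\rm diff} + \mc N_{\rm cut}$. The $\mc N_{\rm diff}$-contribution produces expressions that are multilinear in $(\delta\bfu, \bfu_1, \bfu_2)$ with exactly two copies of $\delta\bfu$, and is estimated exactly as in Section~6 via the trilinear estimate of Section~5 (with $\delta\bfu$ replacing the top-regularity factor there), giving the second line of the claim. The $\mc N_{\rm cut}$-contribution, on the other hand, carries one factor $J_{(K_1,K_2]}\bfu_1$ and only one copy of $\delta\bfu$; I would dualize so that $\delta\bfu$ is pulled out in $\bff^0(T)$ and then bound the remaining trilinear expression by combining Lemma~\ref{nonlin3} (which furnishes a factor $K_1^{-s}$) with the bilinear-Strichartz-based trilinear estimate of Section~5 (which costs $\fr{1}{2}(d+1)$ derivatives), producing the third line of the estimate.

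The step I expect to be the main obstacle is precisely this last point: a naive application of Lemma~\ref{nonlin3} alone would furnish $K_1^{-s}\|\delta\bfu\|_{\bff^0(T)}^2$, which is both too strong on the $\delta\bfu$ side (the statement is linear, not quadratic, in $\|\delta\bfu\|_{\bff^0(T)}$) and leaves no room for the energy-type derivative loss of $\fr{1}{2}(d+1)$. Recovering the correct asymmetric bound $K_1^{-(s-\fr{1}{2}(d+1))}\|\delta\bfu\|_{\bff^0(T)}$ requires carefully placing the $\fr{1}{2}(d+1)$-derivative loss on the localized factor $J_{(K_1,K_2]}\bfu_1$ via the bilinear Strichartz estimate, while keeping $\bfu_1,\bfu_2$ in their natural $\bff^s(T)$ norms and $\delta\bfu$ in $\bff^0(T)$; this is the essential new use of the bilinear Strichartz estimate in the difference argument.
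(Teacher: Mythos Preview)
Your overall plan mirrors the paper's, but there are two genuine gaps.

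First, the coercivity step fails for large data: from $|\cm^0-\|\delta\bfu\|_{\ch^0}^2|\les(\|\bfu_1\|_{\ch^s}+\|\bfu_2\|_{\ch^s})\|\delta\bfu\|_{\ch^0}^2$ you can only conclude $\cm^0\gtrsim\|\delta\bfu\|_{\ch^0}^2$ when the reference norms are small, which is not assumed. The paper's device (see \eqref{diff11} and Lemma~\ref{coer2}, modeled on \eqref{ene10}) is to build the prefactor $(1+N^{-1}\til C(\|\bfu_1\|_{\ch^{s_0}}^2+\|\bfu_2\|_{\ch^{s_0}}^2))$ into the leading quadratic part of the frequency-localized energy; then Young's inequality $\|\bfu_j\|_{\ch^{s_0}}\le\fr{1}{2C}+\fr{C}{2}\|\bfu_j\|_{\ch^{s_0}}^2$ absorbs the correction into this prefactor uniformly in the data size. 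This is precisely the mechanism the introduction highlights as removing the smallness hypothesis. The time derivative of the extra prefactor is then controlled separately by Lemma~\ref{s_0norm} and contributes to the cubic $\bff^s$ terms on the second line of the target estimate.

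Second, invoking Lemma~\ref{nonlin3} for the $\mc N_{\rm cut}$ contribution is the wrong tool: that lemma bounds the $\bfg^0(T)$-norm of a nonlinearity and feeds into the linear estimate (Lemma~\ref{lin}), not into the space-time integrals arising from $\fr{d}{dt}\cm^0$; there is no $\bff$--$\bfg$ duality available here. The paper instead bounds these integrals directly: after the commutator decomposition of \eqref{pr_ene6}--\eqref{pr_ene8}, it applies Proposition~\ref{trilin1} together with Remark~\ref{rem_del}, which converts $J_{(K_1,K_2]}$ on a factor at frequency $N'$ into a gain $K_1^{-\vep}$ at cost $N'^{\vep}$; choosing $\vep\in[s-\fr{1}{2}(d+1),s]$ balances this against the derivative and the $(N_3^*)^{(d-1)/2}$ loss of the trilinear estimate (see \eqref{pr_dene1_2}--\eqref{pr_dene1_3}). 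The quadrilinear integrals that arise when the cut-off falls inside a factor $P_N(\,\cdot\,)$ of the correction terms $M_N$ are handled not by the trilinear estimate but by Lemma~\ref{qu1} combined with Remark~\ref{rem_del}.
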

\noindent
We prove Lemma \ref{dene1} in Section 7.

From Lemmas \ref{lin}, \ref{nonlin2}--\ref{dene1} and Proposition \ref{apri1}, we obtain the following estimate of the solutions to \eqref{K-CC} in $\ch^0$.
\begin{prop}\label{dest1}
Let $d\in \N$, $\al, \be, \ga\in \R\sm\{0\}$ satisfy $\al - \ga =0$ and $\be + \ga \neq 0$, and $s> \fr{1}{2}(d+1)$.
Then, for any $R>0$, there exists $T\in (0, 1]$ as follows: 
For $j=1, 2$, if $\bfu_{0, j}=(u_{0, j}, v_{0, j}, w_{0, j})\in \ch^{2s+10}$ satisfy $\|\bfu_{0, j}\|_{\ch^s}\le R$, then for any $K_j\in 2^{\N}$, there exists a solution $\bfu_j\in \cap_{s'\in \R} C([0, T]; \ch^{s'}(\R^d))$ of \eqref{K-CC} with $(u_0, v_0, w_0)=\bfu_{0, j}$ and $K=K_j$.
Moreover, $\bfu_1, \bfu_2$ satisfies
\begin{align*}
&\|\bfu_1-\bfu_2\|_{\bfe^0(T)} + \|\bfu_1-\bfu_2\|_{\bff^0(T)} \\
&\les (1+R^2)(\|\bfu_1(0)-\bfu_2(0)\|_{\ch^{0}} + \max\{K_1^{-(s-\fr{1}{2}(d+1))}, K_2^{-(s-\fr{1}{2}(d+1))}\}),
\end{align*}
where the implicit constant is independent of $K_1, K_2$.
\end{prop}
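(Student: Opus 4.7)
The plan is to run the short-time Fourier restriction norm argument on the difference $\bfu_1 - \bfu_2$, exactly parallel to the proof of Proposition \ref{apri1} but measured in the low-regularity space $\ch^0$, using the nonlinear estimates for differences (Lemmas \ref{nonlin2}, \ref{nonlin3}) together with the energy estimate for differences (Lemma \ref{dene1}).

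First I invoke Proposition \ref{apri1} (with $\til s = s$) to produce smooth solutions $\bfu_j \in \cap_{s' \in \R} C([0, T_0]; \ch^{s'})$ on a common existence time $T_0 = T_0(R) \in (0,1]$, each satisfying the uniform bound $\|\bfu_j\|_{\bff^s(T_0)} + \|\bfu_j\|_{\bfe^s(T_0)} \les (1+R)R$. Subtracting the two equations for $\bfu_1$ and $\bfu_2$, the nonlinearity driving $\bfu_1 - \bfu_2$ splits as
\begin{equation*}
N_{K_1}(\bfu_1) - N_{K_2}(\bfu_2) = \bigl[N_{K_1}(\bfu_1) - N_{K_1}(\bfu_2)\bigr] - J_{(K_1, K_2]}\bigl(N_\infty(\bfu_2)\bigr),
\end{equation*}
where in each coordinate the first bracket expands into bilinear terms carrying one factor from $\bfu_1-\bfu_2$, and the second is a frequency-truncated bilinear form depending only on $\bfu_2$.

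Applying the linear estimate (Lemma \ref{lin}), bounding the first bracket by Lemma \ref{nonlin2} with the $F^0$-norm placed on the difference factor and the $F^s$-norm on the remaining $\bfu_j$-factor, and bounding the second bracket by Lemma \ref{nonlin3}, I obtain
\begin{equation*}
\|\bfu_1-\bfu_2\|_{\bff^0(T')} \les \|\bfu_1-\bfu_2\|_{\bfe^0(T')} + T'^\theta R\|\bfu_1-\bfu_2\|_{\bff^0(T')} + K_1^{-s} T'^\theta R^2
\end{equation*}
for $T' \in (0, T_0]$; shrinking $T'$ to $(0, T_1]$ for a suitable $T_1 = T_1(R)$ absorbs the middle term. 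Feeding the resulting inequality and the uniform $\bff^s$-control on $\bfu_j$ into Lemma \ref{dene1}, shrinking $T'$ once more to absorb the quadratic $T'^\theta(R+R^3)\|\bfu_1-\bfu_2\|_{\bfe^0(T')}^2$ term, and applying Young's inequality to the cross term $K_1^{-(s-\fr{1}{2}(d+1))}T'^\theta R^2(1+R^2)\|\bfu_1-\bfu_2\|_{\bff^0(T')}$ in order to absorb one further copy of $\|\bfu_1-\bfu_2\|_{\bfe^0(T')}$, while noting that $s > \fr{1}{2}(d+1)$ with $K_1 \ge 1$ gives $K_1^{-s} \le K_1^{-(s-\fr{1}{2}(d+1))}$, I reach
\begin{equation*}
\|\bfu_1-\bfu_2\|_{\bfe^0(T')} \les (1+R^2)\|\bfu_1(0)-\bfu_2(0)\|_{\ch^0} + (1+R^2) K_1^{-(s-\fr{1}{2}(d+1))},
\end{equation*}
and the same bound for $\|\bfu_1-\bfu_2\|_{\bff^0(T')}$ upon substituting back. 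Since $K_1 \le K_2$ gives $K_1^{-(s-\fr{1}{2}(d+1))} = \max\{K_1^{-(s-\fr{1}{2}(d+1))}, K_2^{-(s-\fr{1}{2}(d+1))}\}$, this concludes the proof with $T = T_1(R)$.

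The main obstacle is bookkeeping rather than any new analytic idea: every shrinking of $T'$ must depend only on $R$ (not on the $K_j$ or on the initial data) so that $T$ is uniform in $K_j$, every multiplicative constant must stay controlled by a fixed polynomial in $R$, and one must confirm that the weaker of the two $K_1$-decay rates produced separately by the nonlinear estimate ($K_1^{-s}$) and the energy estimate ($K_1^{-(s-\fr{1}{2}(d+1))}$) is what survives in the final bound.
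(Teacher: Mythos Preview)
Your proposal is correct and follows essentially the same route as the paper: invoke Proposition \ref{apri1} for uniform $\bff^s$-bounds, combine Lemma \ref{lin} with Lemmas \ref{nonlin2}, \ref{nonlin3}, and \ref{dene1}, and close by shrinking $T$ depending only on $R$. Two minor organizational differences are worth flagging. First, your algebraic decomposition places the truncation error $J_{(K_1,K_2]}$ on the bilinear form of $\bfu_2$, whereas the paper (via \eqref{K-CC_diff}) places it on $\bfu_1$; both are fine since $\bfu_1,\bfu_2$ satisfy identical a priori bounds, but your identity already presupposes $K_1\le K_2$, so state that WLOG assumption at the outset rather than only at the end. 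Second, the paper packages the bootstrap into a single quantity $Y(T')=\|\til\bfu\|_{\bfe^0(T')}+\|N_{K_1}(\bfu_1)-N_{K_2}(\bfu_2)\|_{\bfg^0(T')}$ and closes one linear inequality for $Y$, whereas you first absorb in $\bff^0$ and then substitute into Lemma \ref{dene1}; the two organizations are equivalent and your version is fine once you note that all quantities are a priori finite for the smooth approximate solutions.
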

\begin{proof}
For $R>0$, let $T\in (0, 1]$ satisfy the condition of Proposition \ref{apri1}.
Also, we put $\vep=s-\fr{1}{2}(d+1)$.
We may assume $K_1\le K_2$.
We set $\mb{\til u}=(\til u, \til v, \til w):= \bfu_1 - \bfu_2$.
Lemma \ref{lin} yields
\begin{equation}\label{pr_dest1_1}
\|\mb{\til u}\|_{\bff^0(T')}
\les \|\mb{\til u}\|_{\bfe^0(T')} + \|N_{K_1}(u_1, v_1, w_1)- N_{K_2}(u_2, v_2, w_2)\|_{\bfg^0(T')}
\end{equation}
for $T'\in (0, T]$.
Also, from \eqref{K-CC}, we obtain
\begin{equation}\label{K-CC_diff}
\hi\{\begin{aligned}
\pa_t \til u &= i\al \Del \til u -iJ_{\le K_2}((\na \cd \til w) \til v) + iJ_{\le K_2}((\na \cd \til w) v_1)  \\
&\hs{20mm}+ iJ_{\le K_2}((\na \cd w_1) \til v) -i J_{(K_1, K_2]}((\na \cd w_1) v_1), \\
\pa_t \til v &= i\be \Del \til v -iJ_{\le K_2}((\ove{\na \cd \til w}) \til u) + iJ_{\le K_2}((\ove{\na \cd \til w}) u_1)  \\
&\hs{20mm}+ iJ_{\le K_2}((\ove{\na \cd w_1}) \til u) -i J_{(K_1, K_2]}((\ove{\na \cd w_1}) u_1), \\
\pa_t \til w &= i\ga \Del \til w + i\na J_{\le K_2}(\til u \cd \ove{\til v}) -i\na J_{\le K_2}(\til u \cd \ove{v_1}) \\
&\hs{20mm}-i\na J_{\le K_2}(u_1\cd \ove{ \til v}) +i\na J_{(K_1, K_2]}(u_1\cd \ove{v_1}),
\end{aligned}\mi.
\end{equation}
where we used  $J_{\le K_2}\til \bfu = \til \bfu$ and $J_{\le K_1}\bfu_1 = \bfu_1$.
Thus, it follows from Lemmas \ref{nonlin2} and \ref{nonlin3} and Proposition \ref{apri1} that 
\begin{align}
&\|N_{K_1}(u_1, v_1, w_1)- N_{K_2}(u_2, v_2, w_2)\|_{\bfg^0(T)} \label{pr_dest1_2}\\
&\les T^{\theta_1}(\|\bfu_1\|_{\bff^s(T')} + \|\bfu_2\|_{\bff^s(T)})\|\mb{\til u}\|_{\bff^0(T)}+ K_1^{-\vep}T^{\theta_1}\|\bfu_1\|_{\bff^s(T)}^2 \nt\\
&\les (R+R^2)T^{\theta_1}(\|\mb{\til u}\|_{\bff^0(T')} + K_1^{-\vep}) \nt
\end{align}
for $T'\in(0, T]$ and some ${\theta_1}={\theta_1}(d)\in (0, 1]$.
Also, Lemma \ref{dene1}, Proposition \ref{apri1}, and the Cauchy-Schwarz inequality yield
\begin{align}
&\|\mb{\til u}\|_{\bfe^0(T')}^2 \label{pr_dest1_3}\\
&\les (1+R^4)\|\mb{\til u}(0)\|_{\ch^0}^2 + T'^{\theta_2}(R+R^6)\|\mb{\til u}\|_{\bff^0(T')}^2 + T'^{\theta_2}(R^2+R^8)K_1^{-\vep}\|\mb{\til u}\|_{\bff^0(T')} \nt\\
&\les (1+R^4)\|\mb{\til u}(0)\|_{\ch^0}^2 + T'^{\theta_2}(R+R^6)\|\mb{\til u}\|_{\bff^0(T')}^2 + T'^{\theta_2}(R^2+R^{10})K_1^{-2\vep} \nt
\end{align}
for $T'\in(0, T]$ and some ${\theta_2}={\theta_2}(d)\in (0, 1]$.
From  \eqref{pr_dest1_1}--\eqref{pr_dest1_3}, by setting $Y(T'):= \|\til \bfu\|_{\bfe^0(T')} + \|N_{K_1}(u_1, v_1, w_1)- N_{K_2}(u_2, v_2, w_2)\|_{\bfg^0(T')}$, we have
\[
Y(T')
\les(1+R^2)\|\mb{\til u}(0)\|_{\ch^0}+ T'^{\theta}(R^{\fr{1}{2}}+R^3)Y(T') + T'^{\theta}(R+R^{5})K_1^{-\vep}
\]
for $T'\in(0, T]$ and some $\theta=\theta(d)\in (0, 1]$.
Since $Y(T)<\infty$, by taking $T\les (R^{\fr{1}{2}}+R^3)^{-\fr{1}{\theta}}$ if necessary, we have
\[
Y(T')
\les (1+R^2)(\|\mb{\til u}(0)\|_{\ch^0} + K_1^{-\vep})
\]
for $T'\in (0, T]$.
Therefore, Lemma \ref{lin} yields the desired bound.
\end{proof}

Now, we prove the existence of a solution to \eqref{CC} for smooth initial data.
\begin{prop}\label{sm_exist}
Let $d\in \N$, $\al, \be, \ga\in \R\sm\{0\}$ satisfy $\al-\ga=0$ and $\be + \ga \neq 0$, and 
$s> \fr{1}{2}(d+1)$.
Then, for any $R>0$, there exists $T\in (0, 1]$ such that for any $\bfu_0=(u_0, v_0, w_0)\in \ch^{2s+10}(\R^d)$ satisfying $\|\bfu_0\|_{\ch^s}\le R$, there exists a solution $\bfu=(u, v, w) \in C([0, T]; \ch^{2s+5}(\R^d))$ of \eqref{CC}.
\end{prop}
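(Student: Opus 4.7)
The plan is to construct the desired solution of \eqref{CC} as a limit of the approximating solutions $\bfu_K = (u_K, v_K, w_K)$ to \eqref{K-CC} along $K \in 2^{\N}$, using Propositions \ref{apri1} and \ref{dest1}.

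Fix $\bfu_0 \in \ch^{2s+10}(\R^d)$ with $\|\bfu_0\|_{\ch^s} \le R$. First I would apply Proposition \ref{apri1}, which yields a common time $T = T(R) \in (0,1]$ such that for every $K \in 2^{\N}$ there is a solution $\bfu_K \in \bigcap_{s' \in \R} C([0,T]; \ch^{s'}(\R^d))$ of \eqref{K-CC} satisfying
\[
\|\bfu_K\|_{\bfe^{\til s}(T)} + \|\bfu_K\|_{\bff^{\til s}(T)} \les (1+R)\|\bfu_0\|_{\ch^{\til s}}
\]
uniformly in $K$ for every $s \le \til s \le 2s+10$. In particular, $\{\bfu_K\}_K$ is uniformly bounded in $L_T^\infty \ch^{2s+10}(\R^d)$.

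Next, applying Proposition \ref{dest1} to the pair $(\bfu_{K_1}, \bfu_{K_2})$ with $K_1 \le K_2 \in 2^{\N}$, shrinking $T$ once more if necessary to suit both propositions simultaneously, yields
\[
\|\bfu_{K_1} - \bfu_{K_2}\|_{\bfe^0(T)} + \|\bfu_{K_1} - \bfu_{K_2}\|_{\bff^0(T)} \les (1+R^2)\bl(\|\bfu_{K_1}(0) - \bfu_{K_2}(0)\|_{\ch^0} + K_1^{-(s-\fr{1}{2}(d+1))}\br).
\]
Since the initial data in \eqref{K-CC} are $J_{\le K_j}\bfu_0$, we have $\bfu_{K_1}(0) - \bfu_{K_2}(0) = -J_{(K_1,K_2]}\bfu_0$, and therefore $\|\bfu_{K_1}(0) - \bfu_{K_2}(0)\|_{\ch^0} \les K_1^{-s}\|\bfu_0\|_{\ch^s}$. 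Because $s > \fr{1}{2}(d+1)$, the right-hand side tends to $0$ as $K_1 \to \infty$, so $\{\bfu_K\}_K$ is Cauchy in $\bfe^0(T) \hookrightarrow C_T\ch^0$.

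Interpolating this $\ch^0$-convergence against the uniform $\ch^{2s+10}$ bound, with interpolation exponent $\theta = \fr{5}{2s+10} \in (0,1)$, shows that $\{\bfu_K\}_K$ is also Cauchy in $C_T\ch^{2s+5}$, hence converges to some limit $\bfu \in C([0,T]; \ch^{2s+5}(\R^d))$. To conclude, I would pass to the limit $K \to \infty$ in the integral identity \eqref{K-CC Hs-sol}: convergence of $\bfu_K$ in $C_T\ch^{2s+5}$ yields convergence of the bilinear terms $(\na \cd w_K)v_K$, $(\ove{\na \cd w_K})u_K$, and $\na(u_K \cd \ove{v_K})$ in a suitable Sobolev topology by the algebra property of $\ch^{s'}$ for $s' > d/2$, while the projectors $J_{\le K}$ converge strongly to the identity on this uniformly $\ch^{2s+10}$-bounded family. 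The main delicate point is this last step, where the regularity obtained by interpolation must be reconciled with the regularity in which Definition \ref{defi sol} interprets the Duhamel equation; this is where the extra five degrees of smoothness budgeted by the gap between $2s+5$ and $2s+10$ is used.
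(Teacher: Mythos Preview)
Your proof is correct and follows essentially the same strategy as the paper: use Proposition \ref{apri1} for uniform $\ch^{2s+10}$ bounds, Proposition \ref{dest1} for Cauchy in $C_T\ch^0$, upgrade to convergence in $C_T\ch^{2s+5}$, and pass to the limit in \eqref{K-CC Hs-sol}. The only difference is that the paper inserts an intermediate step establishing that $\{\bfu_K(t)\}$ is weakly Cauchy in $\ch^{2s+10}$ (uniformly in $t$), thereby identifying the limit in $C_w([0,T];\ch^{2s+10})$ before interpolating down to $\ch^r$ for $r<2s+10$; your direct interpolation between the $\ch^0$ convergence and the uniform $\ch^{2s+10}$ bound bypasses this and is slightly more economical, at the cost of not recording the extra information $\sup_t\|\bfu(t)\|_{\ch^{2s+10}}<\infty$ (which the paper does not actually use downstream).
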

\begin{rem}\rm
If we perform the similar argument as the standard energy method, we can prove that $\bfu\in C([0, T]; \ch^{2s+10})$.
However, we do not show this statement since Proposition \ref{sm_exist} is sufficient to prove Theorem \ref{thm1}.
\end{rem}
\noindent
In the following, we denote $\langle\cd, \cd\rangle_{\ch^s}$ by the standard inner product in $\ch^s$.
\begin{proof}
For $R>0$, let $T\in (0, 1]$ satisfy the condition of Propositions \ref{apri1} and \ref{dest1}.
For $K\in 2^{\N}$, let $\bfu_{K}=(u_K, v_K, w_K)\in \cap_{s'\in \R}C([0, T]; \ch^{s'}(\R^d))$ be a solution to \eqref{K-CC} with initial data $\bfu_{K}(0)=J_{\le K}\bfu_0$.
Proposition \ref{dest1}  yields that $\{\bfu_K\}$ is a Cauchy sequence in $C([0, T]; \ch^{0})$.
In particular, there exists $\bfu=(u, v, w)\in C([0, T]; \ch^{0})$ such that $\lim_{K\to \infty}\|\bfu_{K}-\bfu\|_{C([0, T]; \ch^{0})}=0$.

Also, $\{\bfu_{K}(t)\}$ is a weak Cauchy sequence in $\ch^{2s+10}$ uniformly with respect to $t\in [0, T]$.
Indeed, for any $\phi\in \ch^{2s+10}$ and $a>0$, by choosing $\phi_a\in \ch^{2(2s+10)}$ such that $\|\phi-\phi_a\|_{\ch^{2s+10}}\le a$, the triangle inequality and Proposition \ref{apri1} yield
\begin{align*}
&|\langle \bfu_{K_1}(t) -  \bfu_{K_2}(t), \phi\rangle_{\ch^{2s+10}}| \\
&\le |\langle \bfu_{K_1}(t) -  \bfu_{K_2}(t), \phi-\phi_a\rangle_{\ch^{2s+10}}| + |\langle \bfu_{K_1}(t) -  \bfu_{K_2}(t), \phi_a\rangle_{\ch^{2s+10}}| \\
&\les (R+R^2)\|\phi-\phi_a\|_{\ch^{2s+10}} + \|\bfu_{K_1}(t) -  \bfu_{K_2}(t)\|_{\ch^0}\|\phi_a\|_{\ch^{2(2s+10)}}.
\end{align*}
Therefore, we have
\[
\limsup_{K_1, K_2\to\infty}\sup_{t\in [0, T]}|\langle \bfu_{K_1}(t) -  \bfu_{K_2}(t), \phi\rangle_{\ch^{2s+10}}|
\les  (R+R^2)a.
\]
Since $a>0$ is taken arbitrarily, we obtain 
\[
\lim_{K_1, K_2\to\infty}\sup_{t\in [0, T]}|\langle \bfu_{K_1}(t) -  \bfu_{K_2}(t), \phi\rangle_{\ch^{2s+10}}|=0.
\]
From the weak completeness of $\ch^{2s+10}$, there exists $\mb{u'}\in C_w([0, T]; \ch^{2s+10})$ which is a weak limit of $\{\bfu_K\}$, and then we have $\bfu=\mb{u'}$.

From Proposition \ref{apri1} and the triangle inequality 
\[
|\langle\bfu(t), \phi\rangle_{\ch^{2s+10}}|\le |\langle\bfu(t)-\bfu_K(t), \phi\rangle_{\ch^{2s+10}}| + |\langle\bfu_K(t), \phi\rangle_{\ch^{2s+10}}|
\]
for $\phi\in \ch^{2s+10}$, we have $\sup_{t\in [0, T]}\|\bfu(t)\|_{\ch^{2s+10}}<\infty$.
Thus, we obtain $\bfu\in C([0, T]; \ch^r)$ and $\lim_{K\to \infty}\|\bfu_K-\bfu\|_{C([0, T]; \ch^{r})}=0$ for $r< {2s+10}$.
Therefore, taking $r=2s+5$ and $K\to\infty$ on \eqref{K-CC Hs-sol} in $\ch^{2s+5-2}$, $\bfu$ is a $\ch^{2s+5}$-solution to \eqref{CC}.
\end{proof}

For the solutions to \eqref{CC}, the similar estimates as Propositions \ref{apri1} and \ref{dest1} hold.
We omit the proof.
\begin{prop}\label{apri2}
Let $d\in \N$, $\al, \be, \ga\in \R\sm\{0\}$ satisfy $\al - \ga =0$ and $\be + \ga \neq 0$, $s> \fr{1}{2}(d+1)$, and $s\le \til s\le 2s$.
Then, for any $R>0$ there exists $T\in (0, 1]$ as follows: 
If $\bfu_0=(u_0, v_0, w_0) \in \ch^{2s+10}(\R^d)$ satisfies $\|\bfu_0\|_{\ch^s}\le R$, then there exists a solution $\bfu=(u, v, w)\in C([0, T]; \ch^{2s+5}(\R^d))$ of \eqref{CC}.
Moreover, $\bfu$ satisfies
\[
\|\bfu\|_{\bfe^{\til s}(T)} + \|\bfu\|_{\bff^{\til s}(T)}
\les (1+R)\|\bfu_0\|_{\ch^{\til s}}.
\]
\end{prop}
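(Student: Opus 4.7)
The existence of a solution $\bfu \in C([0,T]; \ch^{2s+5}(\R^d))$ is already supplied by Proposition \ref{sm_exist}, so the only remaining task is the a priori estimate. The plan is to re-run the bootstrap scheme of Proposition \ref{apri1} directly on $\bfu$, exploiting the fact that Lemmas \ref{lin}, \ref{nonlin1}, \ref{energy1}, and \ref{Xs_conti} all accommodate the endpoint $K=\infty$, so that the same inequalities used for $\bfu_K$ now apply verbatim to $\bfu$.

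For $R>0$, let $T\in (0,1]$ be as in Proposition \ref{sm_exist}, so that for $\bfu_0\in \ch^{2s+10}$ with $\|\bfu_0\|_{\ch^s}\le R$ we obtain a solution $\bfu\in C([0,T];\ch^{2s+5})$ of \eqref{CC}. Define $X_\infty^{\til s}(T')$ as in \eqref{Xs} with $s'=\til s$ and $K=\infty$. Since $\til s\le 2s$ gives $\til s+5\le 2s+5$, the inclusion $\bfu\in C([0,T];\ch^{\til s+5})$ holds, and Lemma \ref{Xs_conti} guarantees that $X_\infty^{\til s}$ is continuous and non-decreasing on $(0,T]$ with $\lim_{T'\searrow 0} X_\infty^{\til s}(T') \les \|\bfu_0\|_{\ch^{\til s}}$.

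I would first treat the critical endpoint $\til s = s$. Combining Lemmas \ref{lin}, \ref{nonlin1}, and \ref{energy1} with $K=\infty$ gives, exactly as in \eqref{pr_apri1_3}, constants $\theta\in(0,1]$ and $C_2>0$ such that
\[
X_\infty^{s}(T')
\le C_2\bl( (1+R)\|\bfu_0\|_{\ch^s} + T'^{\theta}(X_\infty^{s}(T')^{\fr{1}{2}} + X_\infty^{s}(T')^{\fr{3}{2}})\br) X_\infty^{s}(T')
\]
for all $T'\in (0,T]$. A standard continuity argument, shrinking $T$ so that $T\les (R^{1/2}+R^{3/2})^{-1/\theta}$ (up to an $R$-independent constant), yields the bound $X_\infty^{s}(T')\le 3C_2(1+R)\|\bfu_0\|_{\ch^s}$ on the resulting uniform interval. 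Lemma \ref{lin} then upgrades this to $\|\bfu\|_{\bff^s(T)}\les (1+R)\|\bfu_0\|_{\ch^s}$.

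For $s<\til s\le 2s$, the same three lemmas, now coupled with the already-established bound on $\|\bfu\|_{\bff^s(T)}$, produce
\[
X_\infty^{\til s}(T') \les (1+R)\|\bfu_0\|_{\ch^{\til s}} + T'^{\theta}(R+R^3)\,X_\infty^{\til s}(T'),
\]
and the finiteness $X_\infty^{\til s}(T')<\infty$ (from Lemma \ref{Xs_conti} and $\bfu\in C_T\ch^{\til s+5}$) lets us absorb the nonlinear term by choosing $T$ small enough depending only on $R$. The main (minor) subtlety is simply to verify that the regularity $\ch^{2s+5}$ delivered by Proposition \ref{sm_exist} is sufficient to invoke Lemma \ref{Xs_conti} at the level $\til s\le 2s$; the choice $\til s\le 2s$ (rather than $\til s\le 2s+10$ as in Proposition \ref{apri1}) is dictated precisely by this constraint, and no additional difficulty arises because the bootstrap step itself is insensitive to the ambient regularity beyond its use in ensuring continuity of $X_\infty^{\til s}$.
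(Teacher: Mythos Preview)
Your proposal is correct and matches the paper's intended argument: the paper explicitly omits the proof of Proposition \ref{apri2}, stating that it proceeds as for Proposition \ref{apri1}, and the remark following Proposition \ref{dest2} confirms that the restriction $\til s\le 2s$ is imposed precisely so that Lemma \ref{Xs_conti} applies at regularity $\til s$ given that Proposition \ref{sm_exist} only delivers $\bfu\in C_T\ch^{2s+5}$. You have also correctly observed that the iterated extension step from the proof of Proposition \ref{apri1} is unnecessary here, since Proposition \ref{sm_exist} already furnishes the solution on a uniform interval; the continuity argument then runs on that fixed interval (possibly shrunk depending only on $R$).
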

\begin{prop}\label{dest2}
Let $d\in \N$, $\al, \be, \ga\in \R\sm\{0\}$ satisfy $\al - \ga =0$ and $\be + \ga \neq 0$, and $s> \fr{1}{2}(d+1)$.
Then, for any $R>0$, there exists $T\in (0, 1]$ as follows: 
For $j=1, 2$, if $\bfu_{0, j}=(u_{0, j}, v_{0, j}, w_{0, j}) \in \ch^{2s+10}(\R^d)$ satisfies $\|\bfu_{0, j}\|_{\ch^s}\le R$, then there exists a solution $\bfu_j=(u_j, v_j, w_j)\in C([0, T]; \ch^{2s+5}(\R^d))$ of \eqref{CC} with $(u_0, v_0, w_0)=\bfu_{0, j}$.
Moreover, $\bfu_1, \bfu_2$ satisfy 
\begin{equation}\label{dest2_1}
\|\bfu_1-\bfu_2\|_{\bfe^0(T)} + \|\bfu_1-\bfu_2\|_{\bff^0(T)}
\les (1+R^2)\|\bfu_{0, 1}-\bfu_{0, 2}(0)\|_{\ch^{0}}.
\end{equation}
\end{prop}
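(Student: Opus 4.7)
The plan is to mimic the proof of Proposition \ref{dest1}, which becomes genuinely simpler once the approximation parameters are removed. Both lemmas \ref{nonlin2} and \ref{dene1} are stated for $K \in 2^{\N}\cup\{\infty\}$, and Proposition \ref{apri2} already supplies the existence of the two smooth solutions on a common time interval together with uniform $\bff^{s}$-bounds; what remains is a straightforward closing of the estimates for $\mb{\til u} := \bfu_1 - \bfu_2$ without any tail contribution from a cutoff.

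First I would apply Proposition \ref{apri2} (with $\til s = s$) to obtain $T\in (0,1]$ depending only on $R$ and solutions $\bfu_j\in C([0,T];\ch^{2s+5})$ of \eqref{CC} satisfying
\[
\|\bfu_j\|_{\bfe^{s}(T)}+\|\bfu_j\|_{\bff^{s}(T)}\les (1+R)R,\qquad j=1,2.
\]
Subtracting the two integral equations in Definition \ref{defi sol}, $\mb{\til u}=(\til u,\til v,\til w)$ solves the system obtained from \eqref{K-CC_diff} by letting $K_1=K_2=\infty$: the $J_{(K_1,K_2]}$-terms drop out entirely and the remaining nonlinear terms on the right are bilinear expressions in $(\mb{\til u},\bfu_1)$ and $(\mb{\til u},\bfu_2)$.

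Next, applying Lemma \ref{lin} componentwise and then Lemma \ref{nonlin2} (with $K=\infty$) gives, for any $T'\in(0,T]$,
\[
\|\mb{\til u}\|_{\bff^{0}(T')}
\les \|\mb{\til u}\|_{\bfe^{0}(T')}
+ T'^{\theta_1}\bl(\|\bfu_1\|_{\bff^{s}(T')}+\|\bfu_2\|_{\bff^{s}(T')}\br)\|\mb{\til u}\|_{\bff^{0}(T')},
\]
for some $\theta_1=\theta_1(d)\in(0,1]$, where the bilinear difference of nonlinearities is absorbed into the last factor using the uniform bound above. In parallel, the proof of Lemma \ref{dene1} applied with $K_1=K_2=\infty$ (so that the $K_1^{-(s-\fr{1}{2}(d+1))}$-contribution disappears) yields
\[
\|\mb{\til u}\|_{\bfe^{0}(T')}^{2}
\les (1+R^{4})\|\mb{\til u}(0)\|_{\ch^{0}}^{2}
+ T'^{\theta_2}(R+R^{6})\|\mb{\til u}\|_{\bff^{0}(T')}^{2}
\]
for some $\theta_2=\theta_2(d)\in(0,1]$.

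Setting $Y(T'):=\|\mb{\til u}\|_{\bfe^{0}(T')}+\|N_{\infty}(\bfu_1)-N_{\infty}(\bfu_2)\|_{\bfg^{0}(T')}$ and combining the two displays with Lemma \ref{lin} gives
\[
Y(T')\les (1+R^{2})\|\mb{\til u}(0)\|_{\ch^{0}}+T'^{\theta}(R^{\fr{1}{2}}+R^{3})Y(T'),
\]
with $\theta=\min\{\theta_1,\theta_2\}$. As in the proof of Proposition \ref{dest1}, $Y$ is continuous on $(0,T]$ by (the appropriate analogue of) Lemma \ref{Xs_conti}, so shrinking $T$ so that $T^{\theta}(R^{\fr{1}{2}}+R^{3})$ is sufficiently small allows the last term to be absorbed, yielding $Y(T)\les (1+R^{2})\|\mb{\til u}(0)\|_{\ch^{0}}$; a final application of Lemma \ref{lin} then gives \eqref{dest2_1}. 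The only mildly delicate point—and thus the main obstacle—is verifying that the modified-energy argument proving Lemma \ref{dene1} goes through verbatim for the untruncated system \eqref{CC} (so that no loss from the cutoffs is introduced in the derivation). This is however direct, since all the frequency-dependent correction terms used in Section 6 are insensitive to whether $J_{\le K}$ is present, provided one uses $\bfu_j\in C_T\ch^{2s+5}$ from Proposition \ref{apri2} to justify the manipulations.
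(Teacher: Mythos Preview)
Your proposal is correct and follows exactly the route the paper intends: the paper explicitly omits the proof of Proposition \ref{dest2}, stating that it is the same as Proposition \ref{dest1} specialized to the untruncated system, and your argument does precisely that (invoking Proposition \ref{apri2} for existence and uniform $\bff^{s}$-bounds, then Lemmas \ref{lin}, \ref{nonlin2}, and \ref{dene1} with $K_1=K_2=\infty$ so the $J_{(K_1,K_2]}$- and $K_1^{-\varepsilon}$-contributions vanish). One small simplification: you do not actually need continuity of $Y$ here, only that $Y(T)<\infty$, which follows immediately from $\bfu_j\in\bff^{s}(T)$ via Proposition \ref{apri2} and the nonlinear estimate; the absorption then works as in the proof of Proposition \ref{dest1}.
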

In Propositions \ref{apri2} and \ref{dest2}, we restrict the condition of $\til s$ in order to guarantee the continuity of $X_\infty^{\til s}$ which is defined by \eqref{Xs}.
However, this condition of $\til s$ is sufficient to prove the well-posedness for \eqref{CC}.

All that is left is to obtain the estimate of the difference of the solution to \eqref{CC} in $\ch^s$.
\begin{lem}\label{dene2}
Let $d\in \N$, $\al, \be, \ga\in \R\sm\{0\}$ satisfy $\al - \ga =0$ and $\be + \ga \neq 0$, and $s> \fr{1}{2}(d+1)$.
Then, there exists $\theta=\theta(d)\in (0, 1]$ such that
\begin{align}
&\|\bfu_1-\bfu_2\|_{\bfe^{s}(T)}^2 \nt\\
&\les (1 + \|\bfu_1(0)\|_{\ch^{s}}^2 + \|\bfu_2(0)\|_{\ch^{s}}^2 )\|\bfu_1(0)-\bfu_2(0)\|_{\ch^s}^2 \nt\\
&\quad + T^{\theta}(\|\bfu_1\|_{\bff^{s}(T)} + \|\bfu_2\|_{\bff^{s}(T)} + \|\bfu_1\|_{\bff^{s}(T)}^3 + \|\bfu_2\|_{\bff^{s}(T)}^3)\|\bfu_1-\bfu_2\|_{\bff^{s}(T)}^2  \nt\\
&\quad + T^{\theta}(\|\bfu_1\|_{\bff^{s}(T)} + \|\bfu_2\|_{\bff^{s}(T)})\|\bfu_1\|_{\bff^{2s}(T)}\|\bfu_1-\bfu_2\|_{\bff^{0}(T)}  \nt
\end{align}
for $T\in (0, 1]$, $j=1,2$ and $\bfu_j=(u_j, v_j, w_j)\in C_T\ch^{2s+5}$ which is a solution to \eqref{CC}.
\end{lem}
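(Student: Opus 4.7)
The plan is to adapt the modified energy method from the proof of Lemma~\ref{energy1} (carried out in Section~6) to the difference $\til\bfu := \bfu_1 - \bfu_2$. Subtracting the two copies of \eqref{CC} yields the perturbed system
\begin{align*}
(i\pt + \al\Del)\til u &= -(\na\cd \til w)\, v_1 - (\na\cd w_2)\, \til v,\\
(i\pt + \be\Del)\til v &= -(\ove{\na\cd \til w})\, u_1 - (\ove{\na\cd w_2})\, \til u,\\
(i\pt + \ga\Del)\til w &= \na(\til u\cd \ove{v_1}) + \na(u_2\cd \ove{\til v}).
\end{align*}
For each dyadic $N\in 2^{\N_0}$ I would compute $\fr{d}{dt}\|P_N \til\bfu\|_{L_x^2}^2$, multiply by $N^{2s}$, and sum over $N$.

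To remove the non-integrable resonant contributions that otherwise obstruct time integration of the trilinear estimates, I would construct a modified energy
\[
\mathcal{E}^s(\til\bfu) = \|\til\bfu\|_{\ch^s}^2 + (\text{frequency-localized correction terms}),
\]
mimicking the construction of Lemma~\ref{energy1}, but with each cubic correction term replaced by an analogous expression that is quadratic in $\til\bfu$ and linear in $\bfu_1$ or $\bfu_2$. The algebraic form is dictated by the same resonance identity exploited under $\al-\ga=0$ and $\be+\ga\neq 0$, and coercivity $\mathcal{E}^s(\til\bfu)\sim \|\til\bfu\|_{\ch^s}^2$ is inherited from the frequency-dependent weights of Lemma~\ref{energy1}, with no smallness requirement on the initial data.

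After cancellation, the surviving contributions to $\fr{d}{dt}\mathcal{E}^s(\til\bfu)$ fall into two groups. The first consists of symmetric trilinear terms schematically of the form $\til\bfu\cd\til\bfu\cd \bfu_j$, bounded by the same trilinear estimate from Section~5 used in Lemma~\ref{energy1}; iterated substitution of the difference equation inside the time derivative of the correction terms produces the linear and cubic powers of $\|\bfu_j\|_{\bff^s}$, yielding the second term on the right-hand side.

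The main obstacle lies in the second group: non-symmetric substitution terms carrying two factors of $\bfu_j$ and only one of $\til\bfu$, schematically $\bfu_1\cd\bfu_1\cd\til\bfu$. These cannot be reorganized to place two copies of $\til\bfu$ inside a single trilinear estimate, and thus do not close in terms of $\|\til\bfu\|_{\bff^s}$ alone. To handle them I would place one $\bfu_1$-factor in the higher-regularity space $\bff^{2s}$ (finite by Proposition~\ref{apri2}, which furnishes the a priori bound up to $\til s\le 2s$) and measure the difference in the low-regularity space $\bff^0$, producing the third term on the right-hand side. This high-low regularity trade-off is the source of the asymmetric term in the estimate and is characteristic of difference estimates for quasilinear-type dispersive problems.
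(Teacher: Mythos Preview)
Your overall architecture---difference system, frequency-localized modified energy with correction terms quadratic in $\til\bfu$ and linear in $\bfu_j$, coercivity as in Section~6, then trilinear estimates from Section~5---matches the paper's proof in Section~7.2. The paper uses precisely the corrections $M_N(\til u,\til v,\til w)$ and $M_N(u_1,\til v,\til w)$ and the coercivity Lemma~\ref{coer2}.

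However, you misidentify the source of the $\bff^{2s}\times\bff^0$ term. You claim the obstruction comes from ``non-symmetric substitution terms carrying two factors of $\bfu_j$ and only one of $\til\bfu$.'' But in Lemma~\ref{dene2} both solutions solve \eqref{CC} itself ($K_1=K_2=\infty$), so every term in \eqref{diff1}--\eqref{diff9} of the schematic shape $\bfu_1\cd\bfu_1\cd\til\bfu$ (these are exactly the $J_{(K_1,K_2]}$-remainder terms) vanishes identically; the paper notes this explicitly just after \eqref{pr_dene2_3}. The real obstruction lies inside your ``easy'' first group of $\til\bfu\cd\til\bfu\cd\bfu_j$ terms. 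Concretely, the first term on the right-hand side of \eqref{diff1},
\[
\ope{Im}\int(\na\cd P_N w_1)\,\til v\cd\ove{P_N\til u}\,dx,
\]
has two difference factors but the derivative sits on $P_N w_1$ at the top frequency. After multiplying by $N^{2s}$ and applying Proposition~\ref{trilin1} with $N_2\les N$, one is left with $N^{2s+1}N_2^{(d-1)/2}\|P_N w_1\|\,\|P_{N_2}\til v\|\,\|P_N\til u\|$; the weight $N^{2s+1}$ cannot be split as $N^s$ on $w_1$ and $N^s$ on $\til u$ while keeping the low-frequency factor $\til v$ controlled. The paper's remedy is to put $N^{2s}$ on $w_1$ (producing $\|\bfu_1\|_{\bff^{2s}}$), $N^{s}$ on $\til u$, and measure $\til v$ in $\bff^{0}$, exactly the high-low trade-off you describe---but applied to a different term than you think. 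Your plan to bound \emph{all} $\til\bfu\cd\til\bfu\cd\bfu_j$ terms by $\|\bfu_j\|_{\bff^s}\|\til\bfu\|_{\bff^s}^2$ would fail precisely here.
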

\noindent
Lemma \ref{dene2} is shown in Section 7.
From Lemma \ref{dene2}, we obtain the following estimate of the difference of the solutions to \eqref{CC} in $\ch^s$.
\begin{prop}\label{dest3}
Let $d\in \N$, $\al, \be, \ga\in \R\sm\{0\}$ satisfy $\al - \ga =0$ and $\be + \ga \neq 0$, and $s> \fr{1}{2}(d+1)$.
Then, for any $R>0$, there exists $T\in (0, 1]$ as follows: 
For $j=1, 2$, if $\bfu_{0, j}=(u_{0, j}, v_{0, j}, w_{0, j}) \in \ch^{2s+10}(\R^d)$ satisfies $\|\bfu_{0, j}\|_{\ch^s}\le R$, then there exists a solution $\bfu_j=(u_j, v_j, w_j)\in C([0, T]; \ch^{2s+5}(\R^d))$ of \eqref{CC} with $(u_0, v_0, w_0)=\bfu_{0, j}$.
Moreover, $\bfu_1, \bfu_2$ satisfy 
\begin{align}
&\|\bfu_1 - \bfu_2\|_{\bfe^{s}(T)} + \|\bfu_1 - \bfu_2\|_{\bff^{s}(T)} \nt\\
&\les (1+R^2)\|\bfu_1(0) - \bfu_2(0)\|_{\ch^{s}} + \|\bfu_1(0) - \bfu_2(0)\|_{\ch^0}^{1/2}\|\bfu_1(0)\|_{\ch^{2s}}^{1/2}. \nt
\end{align}
\end{prop}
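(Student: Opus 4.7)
The plan is to mimic the proof of Proposition \ref{dest1} at the $\ch^s$ level, replacing the $\ch^0$ difference energy estimate with Lemma \ref{dene2} and exploiting the higher-regularity a priori bound from Proposition \ref{apri2} to control the new cross term involving $\|\bfu_1\|_{\bff^{2s}(T)}$. Fix $R>0$ and pick $T_0 = T_0(R) \in (0,1]$ on which the conclusions of Propositions \ref{apri2} and \ref{dest2} hold simultaneously: both smooth solutions $\bfu_1, \bfu_2 \in C([0,T_0]; \ch^{2s+5})$ exist, satisfy $\|\bfu_j\|_{\bfe^{\til s}(T_0)} + \|\bfu_j\|_{\bff^{\til s}(T_0)} \lesssim (1+R)\|\bfu_{0,j}\|_{\ch^{\til s}}$ for $s \le \til s \le 2s$, and the difference obeys $\|\bfu_1 - \bfu_2\|_{\bff^0(T_0)} \lesssim (1+R^2)\|\bfu_1(0) - \bfu_2(0)\|_{\ch^0}$. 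Write $\mb{\til u} := \bfu_1 - \bfu_2$, which solves the difference equation analogous to \eqref{K-CC_diff} with $K_1=K_2=\infty$.

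Next, I would set up a bootstrap for $Y(T') := \|\mb{\til u}\|_{\bfe^s(T')} + \|N_\infty(u_1,v_1,w_1) - N_\infty(u_2,v_2,w_2)\|_{\bfg^s(T')}$. Lemma \ref{lin} applied to $\mb{\til u}$ gives $\|\mb{\til u}\|_{\bff^s(T')} \lesssim Y(T')$. Decomposing each nonlinear difference into a sum of terms carrying exactly one factor of $\mb{\til u}$ and one factor of some $\bfu_j$, an application of Lemma \ref{nonlin1} together with Proposition \ref{apri2} yields $\|N_\infty(u_1,v_1,w_1) - N_\infty(u_2,v_2,w_2)\|_{\bfg^s(T')} \lesssim T'^{\theta}(R + R^2) \|\mb{\til u}\|_{\bff^s(T')}$. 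Inserting these into Lemma \ref{dene2}, and using Proposition \ref{apri2} with $\til s = 2s$ to control $\|\bfu_1\|_{\bff^{2s}(T')} \lesssim (1+R)\|\bfu_1(0)\|_{\ch^{2s}}$ and Proposition \ref{dest2} to control $\|\mb{\til u}\|_{\bff^0(T')}$, one obtains a bootstrap inequality of the form
\begin{align*}
Y(T')^2
&\lesssim (1 + R^2)\|\mb{\til u}(0)\|_{\ch^s}^2 + T'^{\theta}(R + R^3)\, Y(T')^2 \\
&\quad + T'^{\theta}(R + R^4)\, \|\bfu_1(0)\|_{\ch^{2s}}\, \|\mb{\til u}(0)\|_{\ch^0}
\end{align*}
valid for $T' \in (0, T_0]$.

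The last step is a continuity argument. Applying Lemma \ref{Xs_conti} separately to $\|\mb{\til u}\|_{\bfe^s(T')}$ and to $\|N_\infty(u_j,v_j,w_j)\|_{\bfg^s(T')}$, which is permitted since $\bfu_j \in C_{T_0}\ch^{2s+5} \subset C_{T_0}\ch^{s+5}$, shows that $Y$ is non-decreasing and continuous on $(0,T_0]$ with $\lim_{T'\searrow 0} Y(T') \lesssim \|\mb{\til u}(0)\|_{\ch^s}$. Choosing $T = T(R) \in (0, T_0]$ small enough that $T^{\theta}(R + R^3)$ is, say, at most $1/2$ lets one absorb the quadratic term on the right; then $\sqrt{a+b} \le \sqrt{a} + \sqrt{b}$ applied to the remainder produces $Y(T) \lesssim (1+R)\|\mb{\til u}(0)\|_{\ch^s} + (R+R^4)^{1/2}\,\|\bfu_1(0)\|_{\ch^{2s}}^{1/2}\,\|\mb{\til u}(0)\|_{\ch^0}^{1/2}$, and a final invocation of Lemma \ref{lin} delivers the same bound for $\|\mb{\til u}\|_{\bff^s(T)}$.

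The principal obstacle is the last term in Lemma \ref{dene2}, which contains only one factor of $\mb{\til u}$ (in the weak $\bff^0$ norm) and therefore cannot be closed by smallness in $T$ on the left-hand side alone. The idea is to pay for it up front by combining the high-regularity a priori bound of Proposition \ref{apri2} at $\til s = 2s$ with the low-regularity difference estimate of Proposition \ref{dest2}; this is exactly what gives rise to the noncommensurate factor $\|\bfu_1(0)\|_{\ch^{2s}}^{1/2}\|\mb{\til u}(0)\|_{\ch^0}^{1/2}$ in the final bound, and is the reason the statement is not a clean Lipschitz estimate in $\ch^s$. Everything else is a routine adaptation of the argument carried out for Proposition \ref{dest1}.
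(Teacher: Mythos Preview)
Your proposal is correct and follows essentially the same approach as the paper's proof: combine Lemma \ref{lin}, Lemma \ref{nonlin1}, Lemma \ref{dene2}, Proposition \ref{apri2} (at $\til s=2s$), and Proposition \ref{dest2} into a bootstrap inequality for the quantity you call $Y(T')$ (the paper calls it $Z(T')$), then absorb the small-in-$T$ term. The only minor difference is that the paper simply uses $Z(T)<\infty$ (guaranteed by the $\ch^{2s+5}$ smoothness of the solutions) to absorb directly, rather than invoking Lemma \ref{Xs_conti} for a full continuity argument.
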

\begin{proof}
For $R>0$, let $T\in (0, 1]$ satisfy the conditions of Propositions \ref{apri2} and \ref{dest2}.
Let $\mb{\til u}:= \bfu_1 - \bfu_2$.
Then, Lemma \ref{lin} yields
\begin{equation}\label{pr_dest3_1}
\|\mb{\til u}\|_{\bff^s(T')}
\les \|\mb{\til u}\|_{\bfe^s(T')} + \|N_{\infty}(u_1, v_1, w_1)- N_{\infty}(u_2, v_2, w_2)\|_{\bfg^s(T')}
\end{equation}
for $T'\in (0, T]$, where $N_{\infty}$ is defined by \eqref{N_H}.
Also, it follows from \eqref{K-CC_diff}, Lemma \ref{nonlin1}, and Proposition \ref{apri2} that 
\begin{equation}\label{pr_dest3_2}
\|N_{\infty}(u_1, v_1, w_1)- N_{\infty}(u_2, v_2, w_2)\|_{\bfg^s(T')} 
\les T'^{\theta_1}(R+R^2)\|\mb{\til u}\|_{\bff^s(T')}
\end{equation}
for $T'\in (0, T]$ and some $\theta_1=\theta_1(d)\in (0, 1]$.
Moreover, Lemma \ref{dene2} and Proposition \ref{apri2} with $\til s=2s$ yield
\begin{align}
\|\mb{\til u}\|_{\bfe^s(T')}^2
&\les (1+R^4)\|\mb{\til u}(0)\|_{\ch^s}^2 + T'^{\theta_2}(R+R^6)\|\mb{\til u}\|_{\bff^s(T')}^2 \label{pr_dest3_3}\\
&\quad + T'^{\theta_2}(R+R^2)\|\bfu_1\|_{\bff^{2s}(T')}\|\mb{\til u}\|_{\bff^0(T')} \nt\\
&\les (1+R^4)\|\mb{\til u}(0)\|_{\ch^s}^2 + T'^{\theta_2}(R+R^6)\|\mb{\til u}\|_{\bff^s(T')}^2 \nt\\
&\quad + T'^{\theta_2}(R+R^5)\|\bfu_1(0)\|_{\ch^{2s}}\|\mb{\til u(0)}\|_{\ch^0} \nt
\end{align}
for $T'\in (0, T]$ and some $\theta_2=\theta_2(d)\in (0, 1]$.
From  \eqref{pr_dest3_1}--\eqref{pr_dest3_3}, by putting $Z(T'):= \|\til \bfu\|_{\bfe^s(T')} + \|N_{\infty}(u_1, v_1, w_1)- N_{\infty}(u_2, v_2, w_2)\|_{\bfg^s(T')}$, we have
\begin{align}
Z(T')
&\les (1+R^2)\|\mb{\til u}(0)\|_{\ch^s} +  T'^{\theta}(R^{\fr{1}{2}}+R^3)Z(T') \nt\\
&\quad + T'^{\theta}(R^{\fr{1}{2}}+R^{\fr{5}{2}})\|\bfu_1(0)\|_{\ch^{2s}}^{\fr{1}{2}}\|\mb{\til u(0)}\|_{\ch^0}^{\fr{1}{2}} \nt
\end{align}
for $T'\in (0, T]$ and some $\theta=\theta(d)\in (0, 1]$.
Since $Z(T)<\infty$, by taking $T\les (R^{\fr{1}{2}}+R^3)^{-\fr{1}{\theta}}$ if necessary, it holds
\begin{equation}\label{pr_dest3_4}
Z(T')
\les (1+R^2)\|\mb{\til u}(0)\|_{\ch^s} + \|\bfu_1(0)\|_{\ch^{2s}}^{\fr{1}{2}}\|\mb{\til u}(0)\|_{\ch^{0}}^{\fr{1}{2}}
\end{equation}
for $T'\in (0, T]$.
Hence, Lemma \ref{lin} leads to the desired estimate.
\end{proof}

Now, we prove Theorem \ref{thm1}.
\begin{proof}[Proof of Theorem \ref{thm1}]
For $R>0$, let $T\in (0, 1]$ satisfy the conditions of Propositions \ref{sm_exist}, \ref{apri2}, \ref{dest2} and \ref{dest3}.
Also, let $\bfu_0=(u_0, v_0, w_0)\in \ch^s$ satisfy $\|\bfu_0\|_{\ch^s}\le R$.

We first prove the existence of a solution.
From Proposition \ref{sm_exist}, for $j\in \N$, let $J:=2^j$ and  $\bfu_{j}=(u_{j}, v_{j}, w_{j})\in C([0, T]; \ch^{2s+5})$ be a solution to \eqref{CC} with $\bfu_{j}(0)=P_{\le J}\bfu_0\in \ch^{2s+10}$.
Then, Proposition \ref{dest3} yields
\begin{align}
&\|\bfu_{j_1} - \bfu_{j_2}\|_{\bfe^{s}(T)} \label{pr_main1_0}\\
&\les (1+R^2)\|\bfu_{j_1}(0) - \bfu_{j_2}(0)\|_{\ch^{s}} + \|\bfu_{j_1}(0) - \bfu_{j_2}(0)\|_{\ch^0}^{\fr{1}{2}}\|\bfu_{j_1}(0)\|_{\ch^{2s}}^{\fr{1}{2}} \nt\\
&\les (1+R^2)\|\bfu_{j_1}(0) - \bfu_{j_2}(0)\|_{\ch^{s}} + 2^{-sj_1}\|\bfu_{j_1}(0) - \bfu_{j_2}(0)\|_{\ch^s}^{\fr{1}{2}}\cd 2^{sj_1}\|\bfu_{j_1}(0)\|_{\ch^{s}}^{\fr{1}{2}} \nt\\
&\le (1+R^2)\|\bfu_{j_1}(0) - \bfu_{j_2}(0)\|_{\ch^{s}} + R^{\fr{1}{2}}\|\bfu_{j_1}(0) - \bfu_{j_2}(0)\|_{\ch^s}^{\fr{1}{2}} \nt
\end{align}
for $j_1\le j_2$.
Since $\{\bfu_{j}(0)\}$ is a Cauchy sequence in $\ch^{s}$, $\{\bfu_{j}\}$ becomes a Cauchy sequence in $\bfe^{s}(T)$, in particular, in $C([0, T]; \ch^s)$.
Let $\bfu:=\lim_{j\to\infty}\bfu_j$, where the limit is taken in $\bfe^s(T)$. 
Then $\bfu$ is a $\ch^s$-solution to \eqref{CC} with initial data $\bfu_0$.
We note that Proposition \ref{apri2} and taking the limit as $j\to\infty$ yield
\begin{equation}\label{pr_main1_1}
\sup_{t\in [0, T]}\|\bfu(t)\|_{\ch^s}
\le C\|\bfu_0\|_{\ch^{s}}.
\end{equation}

Next, we show the uniqueness in $C([0, T]; \ch^s)\cap \bfe^s(T)\cap \bff^s(T)$.
We assume that $\mb{v}\in C([0, T]; \ch^s)\cap \bfe^s(T)\cap \bff^s(T)$ is also a solution to \eqref{CC} with initial data $\bfu_0$.
Then, by setting $\til Y(T'):= \|\bfu - \mb{v}\|_{\bfe^0(T')} + \|N_{\infty}(\bfu)- N_{\infty}(\mb{v})\|_{\bfg^0(T')}$ for $T'\in (0, T]$, the same argument as the proof of Proposition \ref{dest1} yields
\[
\til Y(T')
\les T'^{\theta}\Bl(\til Y(T)^{\fr{1}{2}} + \til Y(T)^{\fr{3}{2}}\Br)\til Y(T')
\]
for $T'\in(0, T]$ and some $\theta=\theta(d)\in (0, 1]$.
Since $\til Y(T)<\infty$, by taking $T_0\les (\til Y(T)^{\fr{1}{2}} + \til Y(T)^{\fr{3}{2}})^{-\fr{1}{\theta}}$, we have $\bfu=\mb{v}$ on $[0, T_0]$.
In particular, we obtain $\bfu(T_0)=\mb{v}(T_0)$.
Thus, the same argument as above yields $\bfu=\mb{v}$ on $[0, 2T_0]$.
By letting $k$ satisfy $kT_0 \ge T$ and repeating the above argument for $k$ times, we obtain the uniqueness on $[0, T]$.

Finally, we prove the continuity of the flow map.
Let $\bfu_{0, n}\in \ch^s$ satisfy $\|\bfu_{0, n}\|_{\ch^s}\le R$ for $n\in \N$ and $\lim_{n\to\infty}\bfu_{0, n} = \bfu_0$ in $\ch^s$.
Also, let $\bfu_n\in C([0, T]; \ch^s)\cap \bfe^s(T)\cap \bff^s(T)$ be the solution to \eqref{CC} with initial data $\bfu_{0, n}$. 
For $j\in\N$, let $J:= 2^j$ and $\bfu^{(j)}, \bfu_n^{(j)}\in C([0, T]; \ch^{2s+5})$ be the solutions to \eqref{CC} with initial data $P_{\le J}\bfu_0$ and $P_{\le J}\bfu_{0, n}$, respectiely.
From the proof of the existence and the uniqueness, we note that
\[
\lim_{j\to \infty}(\|\bfu - \bfu^{(j)}\|_{\bfe^{s}(T)} + \|\bfu_{n}^{(j)} - \bfu_{n}\|_{\bfe^{s}(T)})=0
\]
for any $n\in \N$.
Proposition \ref{dest3} and the same argument as \eqref{pr_main1_0} yield
\begin{align*}
&\|\bfu^{(j)} - \bfu_{n}^{(j)}\|_{\bfe^{s}(T)} \nt\\
&\les  (1+R^2)\|\bfu^{(j)}(0) - \bfu_{n}^{(j)}(0)\|_{\ch^{s}} + R^{\fr{1}{2}}\|\bfu^{(j)}(0) - \bfu_{n}^{(j)}(0)\|_{\ch^s}^{1/2} \\
&\les  (1+R^2)\|\bfu(0) - \bfu_n(0) \|_{\ch^{s}} +  R^{\fr{1}{2}}\|\bfu(0) - \bfu_n(0) \|_{\ch^{s}}^{1/2} 
\end{align*}
for $j\in \N$.
In particular, we have
\[
\limsup_{j\to\infty}\|\bfu^{(j)} - \bfu_{n}^{(j)}\|_{\bfe^{s}(T)}
\les (1+R^2)\|\bfu(0) - \bfu_n(0) \|_{\ch^{s}} +  R^{\fr{1}{2}}\|\bfu(0) - \bfu_n(0) \|_{\ch^{s}}^{1/2}.
\]
Therefore, by taking the limit superior with respect to $j$ in the following triangle inequality
\[
\|\bfu - \bfu_{n}\|_{\bfe^{s}(T)}
\le \|\bfu - \bfu^{(j)}\|_{\bfe^{s}(T)} + \|\bfu^{(j)} - \bfu_{n}^{(j)}\|_{\bfe^{s}(T)} + \|\bfu_{n}^{(j)} - \bfu_{n}\|_{\bfe^{s}(T)},
\]
we obtain
\[
\|\bfu - \bfu_{n}\|_{\bfe^{s}(T)} 
\les (1+R^2)\|\bfu(0) - \bfu_n(0) \|_{\ch^{s}} +  R^{\fr{1}{2}}\|\bfu(0) - \bfu_n(0) \|_{\ch^{s}}^{1/2}.
\]
This yields the continuity of the flow map.
\end{proof}

\section{Nonlinear estimate}
\subsection{Strichartz and bilinear Strichartz estimates}
In this subsection, we collect the Strichartz and the bilinear Strichartz estimates.
In the following, we say $(q, p)$ are admissible pair if $p, q$ satisfy $2\le p, q \le \infty$, $\fr{2}{q}=\fr{d}{2}- \fr{d}{p}$, and $(d, q, p)\neq (2, 2, \infty)$.
Also, we denote $L_x^p:=L_x^p(\R^d)$ and $L_t^qL_x^p:=L_t^q(\R; L_x^p(\R^d))$.
The Strichartz estimate is as follows:
\begin{lem}[cf. \cite{GV}, \cite{KeTa}]
Let $d\in \N$, $\si\in \R\sm\{0\}$, and $(q, p)$ be an admissible pair.
Then, we have
\[
\|e^{it\si\Del}\varphi\|_{L_t^qL_x^p}
\les \|\varphi\|_{L_x^2}
\]
for any $\varphi\in L^2(\R^d)$.
\end{lem}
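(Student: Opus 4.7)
The plan is to prove the Strichartz estimate by the classical $TT^*$ duality argument combined with the dispersive decay of the free Schr\"{o}dinger propagator. First I would record the two fundamental bounds: the energy identity $\|e^{it\si\Del}\varphi\|_{L_x^2}=\|\varphi\|_{L_x^2}$, which is immediate from Plancherel's theorem since the Fourier symbol $e^{-it\si|\xi|^2}$ has modulus one, and the dispersive bound $\|e^{it\si\Del}\varphi\|_{L_x^\infty}\les |t|^{-d/2}\|\varphi\|_{L_x^1}$ for $t\neq 0$, which follows by writing $e^{it\si\Del}\varphi$ as convolution with the explicit kernel $(4\pi i\si t)^{-d/2}e^{i|x|^2/(4\si t)}$ and bounding the kernel in $L_x^\infty$. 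Riesz--Thorin interpolation between these endpoints then yields
\[
\|e^{it\si\Del}\varphi\|_{L_x^p}
\les |t|^{-d(\fr{1}{2}-\fr{1}{p})}\|\varphi\|_{L_x^{p'}}
\]
for every $2\le p\le \infty$, where $p'$ denotes the H\"{o}lder conjugate of $p$.

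Next I would set up the $TT^*$ reduction. Writing $T\varphi(t,x):=e^{it\si\Del}\varphi(x)$, the desired estimate $\|T\varphi\|_{L_t^qL_x^p}\les \|\varphi\|_{L_x^2}$ is equivalent by duality to the boundedness of
\[
TT^*F(t,x) = \int_{\R} e^{i(t-s)\si\Del}F(s,x)\,ds
\]
from $L_t^{q'}L_x^{p'}$ into $L_t^qL_x^p$. Applying the interpolated dispersive bound pointwise in $s$ gives
\[
\|(TT^*F)(t,\cd)\|_{L_x^p}
\les \int_{\R} |t-s|^{-d(\fr{1}{2}-\fr{1}{p})}\|F(s,\cd)\|_{L_x^{p'}}\,ds,
\]
and the admissibility relation $\fr{2}{q}=\fr{d}{2}-\fr{d}{p}$ is exactly what converts the kernel decay into $|t-s|^{-2/q}$.

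For the non-endpoint range $q>2$ the argument is finished by Minkowski's inequality in $x$ followed by the one-dimensional Hardy--Littlewood--Sobolev inequality applied to the resulting convolution on $\R$, whose exponents are admissible precisely because $q>2$. The main obstacle is the endpoint case $q=2$, which forces $d\ge 3$ and $p=\fr{2d}{d-2}$: here HLS is unavailable and one must invoke the dyadic decomposition of $|t-s|$ combined with the bilinear real interpolation argument of Keel--Tao. Since the admissibility condition in the statement excludes only $(d,q,p)=(2,2,\infty)$, this endpoint does appear and I would cite the Keel--Tao theorem rather than reproduce its proof; the two references \cite{GV} and \cite{KeTa} already cover the non-endpoint and endpoint cases, respectively.
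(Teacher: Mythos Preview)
Your outline is correct and is precisely the standard argument found in the cited references. The paper itself gives no proof of this lemma at all: it is stated with the attribution ``cf.\ \cite{GV}, \cite{KeTa}'' and used as a black box, so your proposal supplies strictly more detail than the paper does.
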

From the Strichartz estimate, we obtain the following corollary.
For the proof, see Lemma 2.3 in \cite{GTV}.
\begin{cor}\label{str_cor}
Let $\si \in \R\sm\{0\}$, $j\in \N_0$, and $(q, p)$ be an admissible pair.
Then, we have
\[
\|Q_j^{\si}u\|_{L_t^qL_x^p}
\les 2^{\fr{j}{2}}\|Q_j^{\si}u\|_{L_t^2L_x^2}
\]
for any $u \in L_t^2(\R ; L_x^2(\R^d))$.
In particular, the definition of $F_{N, \si, T}$ given by \eqref{Fnst} yields
\[
\|g(t)\cd u\|_{L_t^qL_x^p}
\les \|u\|_{F_{N, \si, T}}
\]
for any  $N\in 2^{\N_0}$, $T\in (0, 1]$, $t_N\in \R$, $u\in F_{N, \si, T}$, and $g\in L_t^\infty(\R) $ satisfying $|g(t)|\le \eta_0(NT^{-1}(t-t_N))$ for $t\in\R$, where $\eta_0$ is defined by \eqref{eta}.
Thus we have
\[
\|g(t)\cd u\|_{L_T^q L_x^p}\les \|u\|_{F_{N, \si}(T)}
\]
for $u\in F_{N, \si}(T)$, which yields $\|u\|_{L_T^\infty L_x^2}\les \|u\|_{F_{N, \si}(T)}$ for $u\in F_{N, \si}(T)$.
\end{cor}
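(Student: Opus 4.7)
The plan is to establish the three assertions in order, each following routinely from its predecessor together with the Strichartz estimate of the preceding lemma.

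For the core bound $\|Q_j^\si u\|_{L_t^qL_x^p}\les 2^{j/2}\|Q_j^\si u\|_{L_t^2L_x^2}$, I would invoke the standard transfer principle (this is the content of Lemma 2.3 in \cite{GTV}). Setting $v:=e^{-it\si\Del}Q_j^\si u$, a direct Fourier computation gives
\[
\cf_{t,x}[v](\tau,\xi)=\eta_j(\tau)\,\cf_{t,x}[u](\tau-\si|\xi|^2,\xi),
\]
so the space-time spectrum of $v$ lies in $\{|\tau|\les 2^j\}$. Expressing $Q_j^\si u$ as a $\tau$-superposition of free solutions,
\[
Q_j^\si u(t)=(2\pi)^{-1/2}\int_\R e^{it\tau}e^{it\si\Del}f_\tau\,d\tau,\qquad \cf_x[f_\tau](\xi):=\cf_{t,x}[v](\tau,\xi),
\]
and then combining Minkowski in $\tau$, the Strichartz bound $\|e^{it\si\Del}f_\tau\|_{L_t^qL_x^p}\les\|f_\tau\|_{L_x^2}$, Cauchy--Schwarz in $\tau$ (exploiting that the $\tau$-support has length $\les 2^j$), and Plancherel yields the claimed $2^{j/2}$ loss.

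For the second assertion, the pointwise inequality $|g(t)|\le \eta_0(NT^{-1}(t-t_N))$ majorizes
\[
\|g\cd u\|_{L_t^qL_x^p}\le \|\til u\|_{L_t^qL_x^p},\qquad \til u(t,x):=\eta_0(NT^{-1}(t-t_N))u(t,x).
\]
A dyadic decomposition in modulation combined with the first assertion applied to each $Q_j^\si\til u$ gives
\[
\|\til u\|_{L_t^qL_x^p}\le \sum_{j\ge 0}\|Q_j^\si \til u\|_{L_t^qL_x^p}\les \sum_{j\ge 0} 2^{j/2}\bl\|\eta_j(\tau+\si|\xi|^2)\cf_{t,x}[\til u]\br\|_{L^2_{\tau,\xi}}=\|\cf_{t,x}[\til u]\|_{X_{N,\si}}.
\]
Taking the supremum over $t_N$ identifies the right-hand side with $\|u\|_{F_{N,\si,T}}$, which is the desired bound.

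The two consequences at the $F_{N,\si}(T)$ level are then extracted by infimizing over admissible extensions of $u$ for the first, and by specializing to the admissible pair $(q,p)=(\infty,2)$ for the second: for any $t_0\in[0,T]$, choosing $t_N=t_0$ and $g(t)=\eta_0(NT^{-1}(t-t_0))$ forces $g(t_0)=1$, so evaluating the preceding estimate at $t_0$ bounds $\|u(t_0)\|_{L_x^2}\les\|u\|_{F_{N,\si}(T)}$ uniformly in $t_0\in[0,T]$. There is no serious obstacle here: the only genuinely nontrivial ingredient is the modulation-localized Strichartz estimate of the first step, which is a standard tool in the Fourier restriction norm machinery, and the remaining work is essentially bookkeeping, with the only mild care being to align the time-cutoff coming from $g$ with the defining cutoff of $F_{N,\si,T}$ so that no loss is incurred.
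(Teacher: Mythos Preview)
Your proposal is correct and follows exactly the approach the paper intends: the paper's entire proof is the reference ``See Lemma 2.3 in \cite{GTV}'', which is precisely the transfer-principle argument you spell out (decompose $Q_j^\si u$ as a $\tau$-superposition of free solutions, apply Strichartz, then Cauchy--Schwarz over the $O(2^j)$-sized $\tau$-support). The deductions for $F_{N,\si,T}$ and $F_{N,\si}(T)$ are the natural ones; the only cosmetic point is that in the second step you need not ``take the supremum over $t_N$'' --- the $t_N$ is fixed in the hypothesis, and the right-hand side $\|\cf_{t,x}[\til u]\|_{X_{N,\si}}$ is already dominated by the supremum defining $\|u\|_{F_{N,\si,T}}$.
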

\noindent
From Corollary \ref{str_cor}, we immediately obtain $F_{\si}^s(T)\hookrightarrow E^s(T)$.

The bilinear Strichartz estimate is as follows.
\begin{lem}[\cite{Hirayama2014}, Lemma 3.1]\label{bilin1}
Let $\si_1, \si_2 \in \R\sm\{0\}$ and $j_1, j_2\in \N_0$.
\\
(i) Let $d\in \N$ and  $N_1, N_2 \in 2^{\N_0}$ satisfy $N_1 \gg N_2$.
Then, we have
\begin{align}
&\|(P_{N_1}Q_{j_1}^{\si_1} f_1)(P_{N_2}Q_{j_2}^{\si_2} f_2)\|_{L_t^2L_x^2}  \nt\\
&\les N_2^{\fr{d-1}{2}}N_1^{-\fr{1}{2}}2^{\fr{j_1}{2}}2^{\fr{j_2}{2}}\|P_{N_1}Q_{j_1}^{\si_1} f_1\|_{L_t^2L_x^2}\|P_{N_2}Q_{j_2}^{\si_2} f_2\|_{L_t^2L_x^2} \nt
\end{align}
for $f_1, f_2\in L_t^2(\R ; L_x^2(\R^d))$.
\\
(ii) Let $H>0$ and $A\subset \R^2$ satisfy $A\subset\{(\xi_1, \xi_2)\in \R^2 \mid |\si_1\xi_1-\si_2\xi_2|\ge H\}$.
Then, we have
\begin{align}
&\Bl\|\iint_{\substack{\tau_1+\tau_2=\tau \\ \xi_1+\xi_2=\xi}}\mb{1}_{A}(\xi_1, \xi_2)\cf_{t, x}[Q_{j_1}^{\si_1}f_1](\tau_1, \xi_1)\cf_{t, x}[Q_{j_2}^{\si_2}f_2](\tau_2, \xi_2) \Br\|_{L_{\tau}^2(\R; L_{\xi}^2(\R))}  \nt\\
&\les H^{-\fr{1}{2}}2^{\fr{j_1}{2}}2^{\fr{j_2}{2}}\|P_{N_1}Q_{j_1}^{\si_1} f_1\|_{L_t^2L_x^2}\|P_{N_2}Q_{j_2}^{\si_2} f_2\|_{L_t^2L_x^2} \nt
\end{align}
for $f_1, f_2\in L_t^2(\R ; L_x^2(\R))$.
\end{lem}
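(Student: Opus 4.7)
The plan is to prove both bilinear estimates by passing to the space-time Fourier side and reducing everything to a measure estimate on the convolution support of the two frequency-localized factors, in the standard spirit of the Klein--Machedon style bilinear Strichartz argument.

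For (i), write $\til f_i := P_{N_i}Q_{j_i}^{\si_i}f_i$ and $F_i := \cf_{t,x}[\til f_i]$, so $F_i$ is supported in $\{(\tau_i,\xi_i) : |\tau_i + \si_i|\xi_i|^2| \les 2^{j_i},\ |\xi_i|\sim N_i\}$. Plancherel yields $\|\til f_1\til f_2\|_{L_t^2L_x^2}^2 = \|F_1*F_2\|_{L_{\tau,\xi}^2}^2$, and a pointwise Cauchy--Schwarz on the convolution gives
\[
|(F_1*F_2)(\tau,\xi)|^2 \le |E(\tau,\xi)|\int|F_1(\tau_1,\xi_1)|^2|F_2(\tau-\tau_1,\xi-\xi_1)|^2\,d\tau_1d\xi_1,
\]
where $E(\tau,\xi)$ is the set of $(\tau_1,\xi_1)$ for which both Fourier supports contribute. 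Integrating in $(\tau,\xi)$ by Fubini reduces matters to bounding $\sup_{\tau,\xi}|E(\tau,\xi)|$.

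For fixed $(\tau,\xi)$, I first integrate out $\tau_1$: its measure is at most the intersection of two intervals of lengths $\les 2^{j_1}, 2^{j_2}$, hence $\les \min(2^{j_1},2^{j_2})$. The existence of a valid $\tau_1$ forces the phase $\phi(\xi_1):=\tau + \si_1|\xi_1|^2 + \si_2|\xi-\xi_1|^2$ to satisfy $|\phi(\xi_1)|\les \max(2^{j_1},2^{j_2})$. Since $|\xi_1|\sim N_1\gg N_2 \sim |\xi-\xi_1|$, we have $|\xi|\sim N_1$; substituting $\eta=\xi-\xi_1$ yields $\phi = \si_1|\xi|^2+\tau - 2\si_1\xi\cd\eta + (\si_1+\si_2)|\eta|^2$, and on $|\eta|\sim N_2$ the linear term dominates because $|\si_1\xi\cd\eta|\sim N_1N_2 \gg N_2^2$. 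Decomposing $\eta$ along $\hat\xi$ and $\R^{d-1}$, the component $\eta\cd\hat\xi$ is confined to an interval of length $\les \max(2^{j_1},2^{j_2})/N_1$, while the perpendicular components sweep a set of measure $\les N_2^{d-1}$. Combining,
\[
|E(\tau,\xi)| \les \min(2^{j_1},2^{j_2})\cd\max(2^{j_1},2^{j_2})\cd N_2^{d-1}N_1^{-1} = 2^{j_1+j_2}\,N_2^{d-1}N_1^{-1},
\]
and taking a square root proves (i).

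For (ii), the same Plancherel/Cauchy--Schwarz reduction applies to the stated Fourier convolution, with $\mb{1}_A$ restricting $E(\tau,\xi)$ to $\{(\xi_1,\xi-\xi_1)\in A\}$. In this one-dimensional setting the phase derivative is $\phi'(\xi_1) = 2(\si_1\xi_1-\si_2\xi_2)$, which satisfies $|\phi'|\ge 2H$ on $A$ by hypothesis. Hence the $\xi_1$-measure of $\{|\phi(\xi_1)|\les\max(2^{j_1},2^{j_2})\}\cap A$ is $\les \max(2^{j_1},2^{j_2})/H$, yielding $|E|\les 2^{j_1+j_2}/H$ after combining with the $\tau_1$-bound. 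The main obstacle is the quantitative lower bound $|\na\phi|\sim N_1$ in (i): a priori $\na\phi = 2\si_1\xi_1 - 2\si_2(\xi-\xi_1)$ could be small, but the separation $|\xi_1|\sim N_1\gg N_2\sim|\xi-\xi_1|$ combined with $\si_1\neq 0$ forces $|\na\phi|\sim|\si_1|N_1$, and the coordinate alignment with $\hat\xi$ extracts the $N_2^{d-1}N_1^{-1}$ factor cleanly without any worry about the geometry of level sets of the quadric $\phi^{-1}(s)$.
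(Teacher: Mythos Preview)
Your proof is correct and follows the standard Fourier-side measure-estimate argument for bilinear Strichartz inequalities. The paper itself does not prove this lemma; it is quoted from \cite{Hirayama2014}, Lemma~3.1, and the argument there is precisely the Plancherel/Cauchy--Schwarz reduction to $\sup_{\tau,\xi}|E(\tau,\xi)|$ that you carry out, so your approach matches the intended one.

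Two minor remarks on presentation. First, the sentence ``the linear term dominates because $|\si_1\xi\cd\eta|\sim N_1N_2$'' is imprecise as written, since $\xi\cd\eta$ can vanish when $\eta\perp\xi$; what actually matters, and what you go on to use, is that the \emph{derivative} of $\phi$ in the $\hat\xi$-direction satisfies $|\pa_s\phi|=|2(\si_1+\si_2)s-2\si_1|\xi||\gtrsim N_1$ on the range $|s|\les N_2$, which forces monotonicity in $s$ and yields the interval length $\les\max(2^{j_1},2^{j_2})/N_1$. Second, in part (ii) it is worth noting explicitly that $\phi'(\xi_1)=2(\si_1+\si_2)\xi_1-2\si_2\xi$ is affine in $\xi_1$, so the set $\{|\phi'|\ge 2H\}$ consists of at most two rays on each of which $\phi$ is monotone; this guarantees the sublevel-set bound $\les\max(2^{j_1},2^{j_2})/H$ with a universal constant, without any extra assumption on $A$.
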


From Lemma \ref{bilin1} and the definition of $F_{N, \si, T}$, the following corollary holds.

\begin{cor}\label{bilin2}
Let $\si_1, \si_2 \in \R\sm\{0\}$. 
\\
(i)
Let $d\in \N$ and $N_1, N_2 \in 2^{\N_0}$ satisfy $N_1 \gg N_2$.
Then, we have
\[
\|(g_1(t)\cd u_1)(g_2(t)\cd u_2)\|_{L_t^2L_x^2}
\les N_2^{\fr{d-1}{2}}N_1^{-\fr{1}{2}}\|u_1\|_{F_{N_1, \si_1, T}}\|u_2\|_{F_{N_2, \si_2, T}}
\]
for $T\in (0, 1]$, $t_{N_1}, t_{N_2}\in\R$, $u_1\in F_{N_1, \si_1, T}$ and $u_2\in F_{N_2, \si_2, T}$, and $g_1, g_2\in L_t^\infty(\R) $ satisfying $|g_j(t)|\le \eta_0(N_jT^{-1}(t-t_{N_j}))$ for $j=1, 2$ and $t\in\R$.
The same estimate holds if we change $F_{N_1, \si_1, T}$ and $F_{N_2, \si_2, T}$ into $F_{N_1, \si_1}(T)$ and $F_{N_2, \si_2}(T)$, respectively.
\\
(ii) Let $H>0$ and $A\subset \R^2$ satisfy $A\subset\{(\xi_1, \xi_2)\in \R^2 \mid |\si_1\xi_1-\si_2\xi_2|\ge H\}$.
Then, we have
\begin{align}
&\Bl\|\iint_{\substack{\tau_1+\tau_2=\tau \\ \xi_1+\xi_2=\xi}}\mb{1}_{A}(\xi_1, \xi_2)\cf_{t, x}[g_1(t)\cd u_1](\tau_1, \xi_1)\cf_{t, x}[g_2(t)\cd u_2](\tau_2, \xi_2) \Br\|_{L_{\tau}^2(\R; L_{\xi}^2(\R))}  \nt\\
&\les H^{-\fr{1}{2}}\|u_1\|_{F_{N_1, \si_1}(T)}\|u_2\|_{F_{N_2, \si_2}(T)} \nt
\end{align}
for $T\in (0, 1]$, $N_1, N_2\in 2^{\N_0}$, $t_{N_1}, t_{N_2}\in \R$, $u_1\in F_{N_1, \si_1}(T)$, $u_2\in F_{N_2, \si_2}(T)$, and $g_1, g_2\in L_t^\infty(\R) $ satisfying $|g_j(t)|\le \eta_0(N_jT^{-1}(t-t_{N_j}))$ for $j=1, 2$ and $t\in\R$.
\end{cor}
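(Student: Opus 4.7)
The plan is to reduce both parts to Lemma \ref{bilin1} by means of a modulation decomposition of the inputs, combined with the fact that $\|u_j\|_{F_{N_j, \si_j, T}}$ directly controls the $X_{N_j, \si_j}$-norm of a time-localized version of $u_j$.

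For part (i), I would set $\til u_j := \eta_0(N_j T^{-1}(t - t_{N_j})) u_j$, so that by the definition of $F_{N_j, \si_j, T}$ we have $\|\til u_j\|_{X_{N_j, \si_j}} \le \|u_j\|_{F_{N_j, \si_j, T}}$. The hypothesis $|g_j(t)| \le \eta_0(N_j T^{-1}(t-t_{N_j}))$ yields the pointwise bound
\[
|(g_1 u_1)(g_2 u_2)|(t, x) \le |\til u_1(t, x)|\,|\til u_2(t, x)|,
\]
so that $\|(g_1 u_1)(g_2 u_2)\|_{L_t^2 L_x^2} \le \|\til u_1 \til u_2\|_{L_t^2 L_x^2}$. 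Next, I decompose each $\til u_j = \sum_{k_j \in \N_0} Q_{k_j}^{\si_j} \til u_j$ by modulation. Since $\til u_j$ has space-time Fourier support in $\R \times \ci_{N_j}$, Lemma \ref{bilin1}(i) applied term by term gives
\[
\|(Q_{k_1}^{\si_1}\til u_1)(Q_{k_2}^{\si_2}\til u_2)\|_{L_t^2 L_x^2} \les N_2^{\fr{d-1}{2}} N_1^{-\fr{1}{2}} 2^{\fr{k_1}{2}} 2^{\fr{k_2}{2}}\, \|Q_{k_1}^{\si_1}\til u_1\|_{L_t^2 L_x^2}\, \|Q_{k_2}^{\si_2}\til u_2\|_{L_t^2 L_x^2}.
\]
Summing over $k_1, k_2$ reproduces the $X_{N_j, \si_j}$-norms of $\til u_j$ and therefore yields the claimed bound with $F_{N_j, \si_j, T}$ on the right-hand side; the variant with $F_{N_j, \si_j}(T)$ follows by taking infima over extensions.

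For part (ii), I would apply the analogous strategy with Lemma \ref{bilin1}(ii) in place of (i). Decomposing $g_j u_j = \sum_{k_j \in \N_0} Q_{k_j}^{\si_j}(g_j u_j)$, noting that $g_j u_j$ still has spatial Fourier support in $\ci_{N_j}$, inserting these decompositions into the bilinear convolution defining the left-hand side, and applying Lemma \ref{bilin1}(ii) to each pair gives after summation
\[
\text{LHS} \les H^{-\fr{1}{2}}\, \|g_1 u_1\|_{X_{N_1, \si_1}}\, \|g_2 u_2\|_{X_{N_2, \si_2}}.
\]
It remains to establish $\|g_j u_j\|_{X_{N_j, \si_j}} \les \|u_j\|_{F_{N_j, \si_j}(T)}$. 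For this, I take an extension $\til u_j$ of $u_j$ that approximately realizes the infimum in the $F_{N_j, \si_j}(T)$-norm, write $g_j = g_j\, \zeta(N_j T^{-1}(t - t_{N_j}))$ for a Schwartz bump $\zeta$ equal to $1$ on $\ope{supp}\eta_0$, and then combine the smoothing bound \eqref{Xns4} with a finite partition-of-unity argument at the scale $N_j^{-1} T$ to reduce multiplication by $g_j$ to bounded operations on $X_{N_j, \si_j}$ that are controlled by $\|\eta_0(N_j T^{-1}(t - t_{N_j})) \til u_j\|_{X_{N_j, \si_j}} \le \|\til u_j\|_{F_{N_j, \si_j, T}}$.

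The main obstacle is the last bound in (ii). Since $g_j$ is only assumed to be $L^\infty$, multiplication by $g_j$ is not directly bounded on $X_{N_j, \si_j}$, and the pointwise domination $|g_j| \le \eta_0(N_j T^{-1}(t - t_{N_j}))$ must be exploited indirectly through the Schwartz structure of $\eta_0$ together with \eqref{Xns4}. Part (i) bypasses this difficulty entirely by working with the $L^2$-norm in physical space, where the pointwise bound is immediate.
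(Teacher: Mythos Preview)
Your argument for part~(i) is correct and is exactly the intended route: dominate pointwise by $\eta_0(N_jT^{-1}(\cdot-t_{N_j}))\,u_j$, decompose in modulation, apply Lemma~\ref{bilin1}(i) termwise, and sum to recover the $X_{N_j,\si_j}$-norms, which are bounded by $\|u_j\|_{F_{N_j,\si_j,T}}$ by definition.

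For part~(ii) there is a real gap. Your plan requires the bound $\|\cf_{t,x}[g_j u_j]\|_{X_{N_j,\si_j}}\les\|u_j\|_{F_{N_j,\si_j}(T)}$, but multiplication by a merely bounded $g_j$ (e.g.\ a characteristic function, which is how the corollary is used in the proof of Proposition~\ref{trilin1}) is \emph{not} bounded on $X_{N,\si}$: a jump discontinuity in time produces a $|\tau|^{-1}$ tail in the temporal Fourier transform, so that $2^{j/2}\|Q_j^\si(g_ju_j)\|_{L^2}$ does not decay and the $\ell^1$-sum over $j$ diverges. The smoothing bound~\eqref{Xns4} only covers Schwartz multipliers, and a partition of unity at scale $N_j^{-1}T$ cannot repair a rough edge that already lives at that scale.

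The fix is to notice that the symbol $\mb{1}_A(\xi_1,\xi_2)$ depends only on the \emph{spatial} frequencies, so the bilinear operator $T_A$ whose Fourier side is the convolution in the statement acts pointwise in $t$: for each fixed $t$,
\[
T_A(g_1u_1,g_2u_2)(t,\cdot)=g_1(t)\,g_2(t)\,T_A(u_1,u_2)(t,\cdot).
\]
Hence, exactly as in (i), one has the pointwise bound $|T_A(g_1u_1,g_2u_2)|\le |T_A(\til u_1,\til u_2)|$ with $\til u_j=\eta_0(N_jT^{-1}(\cdot-t_{N_j}))\,u_j$, and by Plancherel the left-hand side of (ii) is dominated by the same expression with $g_ju_j$ replaced by $\til u_j$. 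Now decompose $\til u_j=\sum_{k_j}Q_{k_j}^{\si_j}\til u_j$ and apply Lemma~\ref{bilin1}(ii) termwise; summing yields $H^{-1/2}\|\cf_{t,x}[\til u_1]\|_{X_{N_1,\si_1}}\|\cf_{t,x}[\til u_2]\|_{X_{N_2,\si_2}}$, which is controlled by the $F_{N_j,\si_j,T}$-norms by definition. Taking infima over extensions gives the $F_{N_j,\si_j}(T)$ version.
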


From Lemma \ref{bilin1}, we also obtain the following corollary, which is used to consider the High$\times$High$\to$Low interaction in the proof of nonlinear estimates.
\begin{cor}\label{bilin3}
Let $d\in \N$, $\si_1, \si_2 \in \R\sm\{0\}$, $a\in(0, 1)$, $j_1, j_2\in\N_0$, and $N_1, N_2 \in 2^{\N_0}$ satisfy $N_1 \gg N_2$, we have
\begin{align}
&\|(P_{N_1}Q_{j_1}^{\si_1} f_1)(P_{N_2}Q_{j_2}^{\si_2} f_2)\|_{L_t^2L_x^2} \nt\\
&\les N_2^{\fr{d-1}{2}(1-a)}N_1^{-\fr{1}{2}(1-a)+\fr{d}{2}a}2^{\fr{j_1}{2}}2^{\fr{j_2}{2}(1-a)} \|P_{N_1}Q_{j_1}^{\si_1} f_1\|_{L_t^2L_x^2}\|P_{N_2}Q_{j_2}^{\si_2} f_2\|_{L_t^2L_x^2} \nt
\end{align}
for $f_1, f_2\in L_t^2(\R ; L_x^2(\R^d))$.
In particular, we have
\begin{align}
&\|(g_1(t)\cd u_1)(P_{N_2}Q_{j}^{\si_2}u_2)\|_{L_t^2L_x^2} \nt\\
&\les N_2^{\fr{d-1}{2}(1-a)}N_1^{-\fr{1}{2}(1-a)+\fr{d}{2}a}2^{\fr{j}{2}(1-a)}\|u_1\|_{F_{N_1, \si_1, T}}\|P_{N_2}Q_{j}^{\si_2}u_2\|_{L_t^2L_x^2} \nt
\end{align}
for $T\in (0, 1]$,  $j\in\N_0$, $t_{N_1}\in\R$, $u_1\in F_{N_1, \si_1, T}$, $u_2\in F_{N_2, \si_2, T}$, and $g_1\in L_t^\infty(\R) $ satisfying $|g_1(t)|\le \eta_0(N_1T^{-1}(t-t_{N_1}))$ for $t\in\R$.
\end{cor}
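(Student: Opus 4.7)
My plan is to deduce both inequalities of Corollary \ref{bilin3} from a geometric-mean interpolation between Lemma \ref{bilin1}(i) (which contributes the transversality gain $N_2^{(d-1)/2}N_1^{-1/2}$) and an elementary Bernstein-plus-Strichartz bound that trades this gain for $N_1^{d/2}$. Concretely, for the first displayed inequality I first obtain the auxiliary bound
\[
\|(P_{N_1}Q_{j_1}^{\si_1}f_1)(P_{N_2}Q_{j_2}^{\si_2}f_2)\|_{L_t^2L_x^2}\les N_1^{d/2}\,2^{\fr{j_1}{2}}\|P_{N_1}Q_{j_1}^{\si_1}f_1\|_{L_t^2L_x^2}\|P_{N_2}Q_{j_2}^{\si_2}f_2\|_{L_t^2L_x^2}
\]
by H\"older's inequality $L_{t,x}^2\subset L_t^\infty L_x^\infty\cd L_t^2L_x^2$, Bernstein in $x$ on the high-frequency factor (producing $N_1^{d/2}$), and Corollary \ref{str_cor} with the admissible pair $(q,p)=(\infty,2)$, which converts the residual $L_t^\infty L_x^2$ norm of the $Q_{j_1}^{\si_1}$-localized piece into $2^{j_1/2}\|\cd\|_{L_t^2L_x^2}$. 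Raising Lemma \ref{bilin1}(i) to the power $1-a$ and the auxiliary bound to the power $a$ and multiplying gives $N_1$-exponent $-\fr{1}{2}(1-a)+\fr{d}{2}a$, $N_2$-exponent $\fr{d-1}{2}(1-a)$, $j_1$-weight $\fr{1-a}{2}+\fr{a}{2}=\fr{1}{2}$, and $j_2$-weight $\fr{1-a}{2}$, matching the statement exactly.

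For the ``in particular'' claim, I note that $g_1(t)\cd u_1$ inherits the spatial Fourier support $\ci_{N_1}$ of $u_1$, since $g_1$ depends only on $t$. Decomposing $g_1u_1=\sum_{j_1\in\N_0}Q_{j_1}^{\si_1}(g_1u_1)$, applying the first inequality to each pair $\bl(Q_{j_1}^{\si_1}(g_1u_1),\,P_{N_2}Q_j^{\si_2}u_2\br)$, and summing in $j_1$ by Minkowski, the problem reduces to the $X_{N_1,\si_1}$-type bound
\[
\sum_{j_1\in\N_0}2^{\fr{j_1}{2}}\|Q_{j_1}^{\si_1}(g_1u_1)\|_{L_t^2L_x^2}\les \|u_1\|_{F_{N_1,\si_1,T}}.
\]
The left-hand side is exactly $\|\cf_{t,x}[g_1u_1]\|_{X_{N_1,\si_1}}$, and the hypothesis $|g_1(t)|\le\eta_0(N_1T^{-1}(t-t_{N_1}))$, combined with \eqref{Xns4} applied after writing $g_1$ as the product of $\eta_0(N_1T^{-1}(t-t_{N_1}))$ with a Schwartz factor, then yields this bound, exactly as in the derivation of Corollary \ref{bilin2}(i) from Lemma \ref{bilin1}(i).

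The main obstacle I anticipate is this last bookkeeping step. The analytic heart of the corollary is the one-line geometric-mean interpolation in the first paragraph; extracting $\|u_1\|_{F_{N_1,\si_1,T}}$ from $g_1\cd u_1$ requires careful invocation of \eqref{Xns4} together with the multiplication lemma recorded at the end of Section~2, as well as a little care because $g_1u_1$ is supported in $\ci_{N_1}$ but is not literally of the form $P_{N_1}f_1$ (so one inserts $\sum_{N'\sim N_1}P_{N'}$ and absorbs the $O(1)$ loss). This step is however fully parallel to the already-performed passage from Lemma \ref{bilin1}(i) to Corollary \ref{bilin2}(i), and introduces no essentially new analytic idea.
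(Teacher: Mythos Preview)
Your interpolation argument for the first inequality is exactly the paper's proof: take the geometric mean (with weights $1-a$ and $a$) of Lemma~\ref{bilin1}(i) and the H\"older--Bernstein--Strichartz bound placing the high-frequency factor in $L_t^\infty L_x^\infty$.

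For the ``in particular'' part (which the paper does not detail separately), your outline is on the right track, but the specific device you name---writing $g_1=\eta_0(N_1T^{-1}(\cd-t_{N_1}))\times(\text{Schwartz})$ and invoking \eqref{Xns4}---does not work as stated: for a merely bounded $g_1\in L_t^\infty$ the ratio $g_1/\eta_0$ need not be smooth (or even continuous), so you cannot control $\|\cf_{t,x}[g_1u_1]\|_{X_{N_1,\si_1}}$ this way. The fix is simpler than what you propose. Since the left-hand side is an $L_t^2L_x^2$ norm of a product and $|g_1|\le\eta_0(N_1T^{-1}(\cd-t_{N_1}))$ pointwise, one first bounds
\[
\|(g_1u_1)(P_{N_2}Q_j^{\si_2}u_2)\|_{L_t^2L_x^2}\le\bl\|\bl(\eta_0(N_1T^{-1}(\cd-t_{N_1}))u_1\br)(P_{N_2}Q_j^{\si_2}u_2)\br\|_{L_t^2L_x^2},
\]
and \emph{then} decomposes $\eta_0(\ldots)u_1$ in modulation and applies the first inequality; the resulting $j_1$-sum is $\|\cf_{t,x}[\eta_0(\ldots)u_1]\|_{X_{N_1,\si_1}}\le\|u_1\|_{F_{N_1,\si_1,T}}$ directly from the definition of $F_{N_1,\si_1,T}$. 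This is also the passage behind Corollary~\ref{bilin2}, so your instinct to defer to that derivation was correct---only the order of operations (pointwise domination before, not after, the Fourier-side decomposition) needs adjusting.
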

\begin{proof}
This estimate is obtained by the interpolation of Lemma \ref{bilin1} and the following inequality 
\begin{align}
\|(P_{N_1}Q_{j_1}^{\si_1} f_1)(P_{N_2}Q_{j_2}^{\si_2} f_2)\|_{L_t^2L_x^2} 
&\les \|P_{N_1}Q_{j_1}^{\si_1} f_1\|_{L_t^\infty L_x^\infty}\|(P_{N_2}Q_{j_2}^{\si_2} f_2)\|_{L_t^2L_x^2} \nt\\
&\les N_1^{\fr{d}{2}}2^{\fr{j_1}{2}} \|P_{N_1}Q_{j_1}^{\si_1} f_1\|_{L_t^2L_x^2}\|(P_{N_2}Q_{j_2}^{\si_2} f_2)\|_{L_t^2L_x^2} \nt
\end{align}
which follows from Corollary \ref{str_cor} and the H\"older and Bernstein inequalities.
\end{proof}

\subsection{Proof of Lemmas \ref{nonlin1} and \ref{nonlin2}}
In this subsection, we prove Lemmas \ref{nonlin1} and \ref{nonlin2}.
In the following, for $k\in \N$, we write $N_1^*, \ldots, N_k^*$ to denote $N_1, \ldots, N_k\in 2^{\N_0}$ in order to satisfy $N_1^*\ge \ldots\ge N_k^*$.

\begin{proof}[Proof of Proposition \ref{nonlin1}]
We omit $J_{\le K}$ since it is harmless.
It suffices to show
\begin{align} \label{pr_nlin1_1}
\|P_{N_3} ((\na\cd P_{N_1}w)P_{N_2}v)\|_{G_{{N_3}, \al, T}} 
\les T^{\theta}C_{\mb{N}}\|P_{N_1}w\|_{F_{N_1, \ga, T}}\|P_{N_2}v\|_{F_{N_2, \be, T}}, 
\end{align}
\begin{align} \label{pr_nlin1_2}
\|P_{N_3}((\ove{\na\cd P_{N_1}w})P_{N_2}u)\|_{G_{{N_3}, \be, T}} 
\les  T^{\theta}C_{\mb{N}}\|P_{N_1}w\|_{F_{N_1, \ga, T}}\|P_{N_2}u\|_{F_{N_2, \al, T}}, 
\end{align}
\begin{align} \label{pr_nlin1_3}
\|\na P_{N_3}(P_{N_1}u\cd \ove{P_{N_2}v})\|_{G_{{N_3}, \ga, T}} 
\les  T^{\theta}C_{\mb{N}}\|P_{N_1}u\|_{F_{N_1, \al, T}}\|P_{N_2}v\|_{F_{N_2, \be, T}} 
\end{align}
for any $T\in (0, 1]$ and some $\theta=\theta(d)>0$ and $\del=\del(s, d)>0$ when $\til s\ge s>\max\{\fr{1}{2}, \fr{d-1}{2}\}$, where $C_{\mb{N}}$ is defined for $\mb{N}=(N_1, N_2, N_3)$ by
\[
C_{\mb{N}}
:=N_3^{-\til s}(N_1^{\til s}N_2^s + N_1^sN_2^{\til s})(N_3^*)^{-\del}.
\]
We note that the left-hand sides of \eqref{pr_nlin1_1}--\eqref{pr_nlin1_3} vanish if $N_1^*\gg N_2^*$, and thus we may assume $N_1^*\sim N_2^*$.
We only prove \eqref{pr_nlin1_2}, but \eqref{pr_nlin1_1} and \eqref{pr_nlin1_3} can be shown in the same manner.
For $T>0$ and $m\in\Z$, we define
\begin{equation}\label{g_m}
g_m(t):=\mb{1}_{[(m-1)T/N_1^*, mT/N_1^*]}(t).
\end{equation}
Then, we have
\begin{equation}\label{g_m1}
|g_m(t)|\le \eta_0(NT^{-1}(t-t_m))
\end{equation}
for $N=N_1, N_2, N_3$ and $t\in\R$, where $t_m=(m-1)T/N_1^*$.
Also, for $T>0$ and $t_{N_3}\in\R$, we set 
\[
\til\eta_{0}(t):=\eta_0(N_3T^{-1}(t-t_{N_3})).
\]
In the following, we set $L_t^qL_x^p:=L_t^q(\R ;L_x^p(\R^d))$.\\
\noindent
(I) The case of $N_1^*\sim N_3^*$ (High$\times$High$\to$High, $N_1^*/N_3\sim 1$).
\\
(I-a) The case of $d=1, 2$.

The H\"{o}lder inequality and Corollary \ref{str_cor} yield
\begin{align}
&\|\til\eta_{0}(t) P_{N_3}((\ove{\na\cd P_{N_1}w})P_{N_2}u)\|_{L_t^2L_x^2} \label{Ia1}\\
&\le \sum_{|m|\les N_1^*/N_3}\|\til\eta_{0}(t) P_{N_3}((\ove{g_{m}(t)\na\cd P_{N_1}w})g_{m}(t)P_{N_2}u)\|_{L_t^2L_x^2} \nt\\
&\les \sup_{m\in \Z}|N_3^{-1}T|^{\fr{1}{2}-\fr{d}{4}}\|g_{m}(t)\na\cd  P_{N_1} w\|_{L_t^{8/d}L_x^4}\|g_{m}(t)P_{N_2} u\|_{L_t^{8/d}L_x^4} \nt\\
&\les T^{\fr{1}{2}-\fr{d}{4}}(N_1^*)^{\fr{1}{2}+\fr{d}{4}}\|P_{N_1}w\|_{F_{N_1, \ga, T}}\|P_{N_2}u\|_{F_{N_2, \al, T}}. \nt
\end{align}
Also, from the definitions of $X_{N, \si}$ and $G_{N, \si, T}$, we have
\begin{equation}\label{estG}
\|f\|_{G_{N_3, \be, T}} \les T^{\fr{1}{2}}N_3^{-\fr{1}{2}}\sup_{t_{N_3}\in\R}\|\eta_0(N_3T^{-1}(t-t_{N_3}))\cd f\|_{L_t^2L_x^2}.
\end{equation}
Thus, \eqref{Ia1} and \eqref{estG} yield
\begin{equation}
\|P_{N_3} ( \hs{-0.5mm} (\ove{\na \hs{-0.5mm} \cd  \hs{-0.5mm} P_{N_1} \hs{-0.2mm} w})P_{N_2} \hs{-0.2mm} u)\|_{G_{{N_3}, \be, T}}
 \hs{-0.5mm} \les  \hs{-0.5mm} T^{1-\fr{d}{4}} \hs{-0.5mm} (N_1^*)^{\fr{d}{4}} \hs{-0.5mm} \|P_{N_1} \hs{-0.2mm} w\|_{F_{N_1, \ga, T}} \hs{-0.5mm} \|P_{N_2} \hs{-0.2mm} u\|_{F_{N_2, \al, T}}.
\label{Ia2}
\end{equation}
This yields \eqref{pr_nlin1_2} if $s>\fr{d}{4}$, $\theta=1-\fr{d}{4}$, and $\del=s-\fr{d}{4}$ since we have $ (N_1^*)^{\fr{d}{4}}\les N_3^{-\til s}(N_1^{\til s}N_2^s+N_1^sN_2^{\til s})(N_3^*)^{-(s-\fr{d}{4})}$.
\\
(I-b) The case of $d\ge 3$.

The Sobolev embedding $W^{\fr{d-2}{2}, \fr{2d}{d-1}}(\R^d)\hookrightarrow L^{2d}(\R^d)$ and the H\"{o}lder inequality yield
\begin{align}
&\sum_{|m|\les N_1^*/N_3}\|\til\eta_{0}(t) P_{N_3}((\ove{g_{m}(t)\na\cd P_{N_1}w})g_{m}(t)P_{N_2}u)\|_{L_t^2L_x^2}  \label{Ib1}\\
&\les \sup_{m \in \Z}\|g_{m}(t)\na\cd  P_{N_1} w\|_{L_t^4L_x^{2d/(d-1)}}\|g_{m}(t)P_{N_2} u\|_{L_t^4L_x^{2d}} \nt\\
&\les (N_3^*)^{\fr{d}{2}}\|P_{N_1}w\|_{F_{N_1, \ga, T}}\|P_{N_2}u\|_{F_{N_2, \al, T}}.\nt
\end{align}
Thus, we have from \eqref{estG} and \eqref{Ib1}  that
\begin{equation}
\|P_{N_3} ( \hs{-0.5mm} (\ove{\na \hs{-0.5mm} \cd  \hs{-0.5mm} P_{N_1} \hs{-0.2mm} w})P_{N_2} \hs{-0.2mm} u)\|_{G_{{N_3}, \be, T}}
 \hs{-0.5mm} \les  \hs{-0.5mm} T^{\fr{1}{2}} \hs{-0.5mm} (N_1^*)^{\fr{d-1}{2}} \hs{-0.5mm} \|P_{N_1} \hs{-0.2mm} w\|_{F_{N_1, \ga, T}} \hs{-0.5mm} \|P_{N_2} \hs{-0.2mm} u\|_{F_{N_2, \al, T}}, \label{Ib2}
\end{equation}
which yields \eqref{pr_nlin1_2} if $s>\fr{d-1}{2}$, $\theta=\fr{1}{2}$, and $\del=s-\fr{d-1}{2}$.
\\
(II) The case of $N_2^*\gg N_3^*$.

We divide this case into two cases. 
\\
(II-i) $N_3\neq N_3^*$ (High$\times$Low$\to$High, $N_1^*/N_3\sim 1$). 

We only consider the case of $N_2=N_3^*$.
The case of $N_1=N_3^*$ is similar.
Since $N_1\gg N_2$, Corollary \ref{bilin1} yields
\begin{align} \label{IIi1}
&\sum_{|m|\les N_1^*/N_3}\|\til\eta_{0}(t) P_{N_3}((\ove{g_{m}(t)\na\cd P_{N_1}w})g_{m}(t)P_{N_2}u)\|_{L_t^2L_x^2} \\
&\les N_2^{\fr{d-1}{2}}N_1^{-\fr{1}{2}}\cd N_1\|P_{N_1}w\|_{F_{N_1, \ga, T}}\|P_{N_2}u\|_{F_{N_2, \al, T}} \nt\\
&= (N_3^*)^{\fr{d-1}{2}}N_1^{\fr{1}{2}}\|P_{N_1}w\|_{F_{N_1, \ga, T}}\|P_{N_2}u\|_{F_{N_2, \al, T}}. \nt
\end{align}
Thus, it holds from \eqref{estG} and \eqref{IIi1} that
\begin{equation}
\|P_{N_3} ( \hs{-0.5mm} (\ove{\na \hs{-0.5mm} \cd  \hs{-0.5mm} P_{N_1} \hs{-0.2mm} w})P_{N_2} \hs{-0.2mm} u)\|_{G_{{N_3}, \be, T}}
 \hs{-0.5mm} \les  \hs{-0.5mm} T^{\fr{1}{2}} \hs{-0.5mm} (N_1^*)^{\fr{d-1}{2}} \hs{-0.5mm} \|P_{N_1} \hs{-0.2mm} w\|_{F_{N_1,  \hs{-0.2mm} \ga,  \hs{-0.2mm} T}} \hs{-0.5mm} \|P_{N_2} \hs{-0.2mm} u\|_{F_{N_2,  \hs{-0.2mm} \al,  \hs{-0.2mm} T}} \hs{-0.2mm} , \label{IIi2}
\end{equation}
which yields \eqref{pr_nlin1_2} if $s>\fr{d-1}{2}$, $\theta=\fr{1}{2}$, and $\del=s-\fr{d-1}{2}$.
\\
(II-ii) $N_3= N_3^*$ (High$\times$High$\to$Low, $N_1^*/N_3\gg 1$).

Let $0<a<1$ be chosen later.
Then, the H\"{o}lder inequality (with $(\fr{1}{2}, \fr{1}{\infty}) + (\fr{1}{\infty}, \fr{1}{2}) + (\fr{1}{2}, \fr{1}{2}) = (1, 1)$), $N_2\gg N_3$, and Corollary \ref{str_cor} and \ref{bilin3} yield
\begin{align*}
&2^{-\fr{j_3}{2}}\sum_{|m|\les N_1^*/N_3}\|\til\eta_{0}(t) P_{N_3}Q_{j_3}^{\be}((\ove{g_{m}(t)\na\cd P_{N_1}w})g_{m}(t)P_{N_2}u)\|_{L_t^2L_x^2} \\
&\les 2^{-\fr{j_3}{2}}\fr{N_1^*}{N_3^*} \sup_{\substack{m\in\Z \\ \|f_m\|_{L_{t, x}^2}=1}}\hs{-3mm}\Bl|\int_{\R}\int_{\R^d} (\ove{g_{m}(t)\na \hs{-0.5mm} \cd \hs{-0.5mm} P_{N_1}w})(g_{m}(t) P_{N_2}u \hs{-0.5mm} \cd \hs{-0.5mm} \ove{\til \eta_0(t) P_{N_3}Q_{j_3}^\be f_m})   dxdt\Br| \\
&\les 2^{-\fr{j_3}{2}}\fr{N_1^*}{N_3^*}\cd N_1\cd (N_1^*T^{-1})^{-\fr{1}{2}}\cd N_3^{\fr{d-1}{2}(1-a)}N_2^{-\fr{1}{2}(1-a)+\fr{d}{2}a}2^{\fr{j_3}{2}(1-a)} \\
&\hs{20mm} \times\|P_{N_1}w\|_{F_{N_1, \ga, T}}\|P_{N_2}u\|_{F_{N_2, \al, T}} \\
&\le T^{\fr{1}{2}}(N_1^*)^{1+\fr{d+1}{2}a}(N_3^*)^{\fr{d-3}{2}-\fr{d-1}{2}a}2^{-\fr{j_3}{2}a}\|P_{N_1}w\|_{F_{N_1, \ga, T}}\|P_{N_2}u\|_{F_{N_2, \al, T}}
\end{align*}
for $j_3\in \N_0$.
By the definition of $G_{N_3, \be, T}$ and summing up over $j_3\in\N_0$, we obtain
\begin{align}
&\|P_{N_3}((\ove{\na\cd P_{N_1}w})P_{N_2}u)\|_{G_{{N_3}, \be, T}} \label{IIii2}\\
&\les T^{\fr{1}{2}}(N_1^*)^{1+\fr{d+1}{2}a}(N_3^*)^{\fr{d-3}{2}-\fr{d-1}{2}a}\|P_{N_1}w\|_{F_{N_1, \ga, T}}\|P_{N_2}u\|_{F_{N_2, \al, T}}, \nt
\end{align}
which yields \eqref{pr_nlin1_2} if $s> \max\{\fr{d-1}{2}, \fr{1}{2}\}$, $\theta=\fr{1}{2}$, and $\del=\fr{1}{2}(s-\max\{\fr{d-1}{2}, \fr{1}{2}\})$.
Indeed, \eqref{IIii2} is shown by
\begin{equation}\label{pr_nlin1_4}
(N_1^*)^{1+\fr{d+1}{2}a}(N_3^*)^{\fr{d-3}{2}}
\les (N_3)^{-\til s}(N_1^{\til s}N_2^{s} + N_1^{s}N_2^{\til s})(N_3^*)^{-\fr{1}{2}(s-\max\{\fr{d-1}{2}, \fr{1}{2}\})}
\end{equation}
with $a=\min\{\fr{1}{2}, \fr{1}{d+1}(s-\max\{\fr{d-1}{2}, \fr{1}{2}\})\}$.
In the case of $d=1$, we can obtain \eqref{pr_nlin1_4} by dividing the condition of $\til s$ into the case of $\fr{1}{2}< \til s\le 1$ and the case of $\til s>1$.
Also, the case of $d\ge 2$ is straightforward because $\fr{d-3}{2}+\til s>0$.
\end{proof}

\begin{proof}[Proof of Lemma \ref{nonlin2}]
It suffices to show
\begin{equation*}
\|P_{N_3}((\na\cd P_{N_1}w)P_{N_2}v)\|_{G_{{N_3}, \al, T}}
\les  T^{\theta}C_{\mb{N}}'\|P_{N_1}w\|_{F_{N_1, \ga, T}}\|P_{N_2}v\|_{F_{N_2, \be, T}},
\end{equation*}
\begin{equation}\label{pr_nlin2_2}
\|P_{N_3}(( \ove{\na\cd P_{N_1}w})P_{N_2}u)\|_{G_{{N_3}, \be, T}}
\les  T^{\theta}C_{\mb{N}}'\|P_{N_1}w\|_{F_{N_1, \ga, T}}\|P_{N_2}u\|_{F_{N_2, \be, T}},
\end{equation}
\begin{equation*}
\|\na P_{N_3}(P_{N_1}u\cd \ove{P_{N_2}v})\|_{G_{{N_3}, \ga, T}} 
\les T^{\theta}C_{\mb{N}}'\|P_{N_1}u\|_{F_{N_1, \al, T}}\|P_{N_2}v\|_{F_{N_2, \be, T}}
\end{equation*}
for any $T\in (0, 1]$ and some $\theta=\theta(d)>0$ and $\del=\del(s, d)>0$ if $ s>\max\{\fr{d-1}{2}, 1\}$, where $C_{\mb{N}}'$ is defined for $\mb{N}=(N_1, N_2, N_3)$ by
\[
C_{\mb{N}}'
:=\min\{N_1^s, N_2^s\}(N_3^*)^{-\del}.
\]
We only prove \eqref{pr_nlin2_2}.
Other estimates are shown in the same manner.
We divide four cases as in the proof of Lemma \ref{nonlin1}.

In the case of (I-a), \eqref{Ia2} yields \eqref{pr_nlin2_2} if $s>\fr{d}{4}$, $\theta=1-\fr{d}{4}$, and $\del=s-\fr{d}{4}$.
In the case of (I-b) and (II-i), it holds from \eqref{Ib2} and \eqref{IIi2} that we have \eqref{pr_nlin2_2} if $s>\fr{d-1}{2}$, $\theta=\fr{1}{2}$, and $\del=s-\fr{d-1}{2}$.
In the case of (II-ii), \eqref{IIii2} yields \eqref{pr_nlin2_2} if $s> \max\{\fr{d-1}{2}, 1\}$, $\theta=\fr{1}{2}$, and $\del=\fr{1}{2}(s-\max\{\fr{d-1}{2}, 1\}$ since we have $(N_1^*)^{1+\fr{d+1}{2}a}(N_3^*)^{\fr{d-3}{2}} \les \min\{N_1^s, N_2^s\}(N_3^*)^{-\fr{1}{2}(s-\max\{\fr{d-1}{2}, 1\})}$ with $a=\min\{\fr{1}{2}, \fr{1}{d+1}(s-\max\{\fr{d-1}{2}, 1\})\}$.
\end{proof}

\section{Multilinar estimates}
\subsection{Trilinear estimates}
Also in this section, for $k\in \N$ we write $N_1^*, \ldots, N_k^*$ to denote $N_1, \ldots, N_k\in 2^{\N_0}$ in order to satisfy $N_1^*\ge \ldots\ge N_k^*$.
The trilinear estimate which will be used to obtain the energy estimate and the estimate of the difference is as follows.
\begin{prop}\label{trilin1}
Let $d\in \N$, $\al, \be, \ga\in \R\sm\{0\}$ satisfy $\al-\ga=0$ and $\be + \ga \neq 0$, and $N_1, N_2, N_3\in 2^{\N_0}$.
Then, there exists $\theta=\theta(d)>0$ such that
\begin{align}
&\Bl|\int_0^T\int_{\R^d}(\ove{P_{N_1}u})(P_{N_2}v)(P_{N_3}w)\, dxdt\Br| \label{trilin1_1}\\
&\les  T^{\theta}(N_3^*)^{\fr{d-1}{2}}\|P_{N_1}u\|_{F_{N_1, \al}(T)}\|P_{N_2}v\|_{F_{N_2, \be}(T)}\|P_{N_3}w\|_{F_{N_3, \ga}(T)} \nt
\end{align}
for $T\in (0, 1]$, $u\in F_{\al}^0(T)$, $v\in F_{\be}^0(T)$, and $w\in F_{\ga}^0(T)$.
\end{prop}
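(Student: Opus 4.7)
The plan is to combine a time-interval partition with the bilinear Strichartz estimates from Corollary \ref{bilin2}. Since the Fourier support of $\ove{P_{N_1}u}\cd P_{N_2}v\cd P_{N_3}w$ lies in $\{\xi_1=\xi_2+\xi_3\}$, the integral vanishes unless the two largest of $N_1,N_2,N_3$ are comparable, so I first reduce to $N_1^*\sim N_2^*$. Then, partitioning $[0,T]$ into $\sim N_1^*$ intervals of length $T/N_1^*$ via the sharp cutoffs $g_m$ from \eqref{g_m} (which simultaneously satisfy $|g_m(t)|\le \eta_0(NT^{-1}(t-t_m))$ for $N=N_1,N_2,N_3$), I apply Cauchy--Schwarz on each slice to peel off one factor in $L_{t,x}^2$ (bounded by $T^{\fr{1}{2}}$ times its $F$-norm through Corollary \ref{str_cor}) and reduce to estimating a bilinear $L_{t,x}^2$-norm so as to produce the required $(N_3^*)^{\fr{d-1}{2}}$.

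In the non-resonant regime $N_3^*\ll N_1^*\sim N_2^*$, I pair the low-frequency factor with one of the two high-frequency factors and apply Corollary \ref{bilin2}(i) to obtain a bilinear gain $(N_3^*)^{\fr{d-1}{2}}(N_1^*)^{-\fr{1}{2}}$ per slice. Squaring and summing the $\sim N_1^*$ slices via Cauchy--Schwarz contributes a factor $(N_1^*)^{\fr{1}{2}}$, which exactly cancels the $(N_1^*)^{-\fr{1}{2}}$, leaving the bound with $\theta=\fr{1}{2}$. This mirrors the analysis carried out in the non-linear estimate of Section 4.

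The main obstacle is the resonant regime $N_1\sim N_2\sim N_3\sim N$, where no factor has strictly smaller frequency and the pair $(u,w)$ has matching dispersion $\al=\ga$, precluding a useful bilinear smoothing from that pair. In dimensions $d\ge 2$, a direct H\"older argument using admissible Strichartz pairs together with Bernstein's inequality (with partition-summation losses of $(N_1^*)^{1/q}$) already yields the desired $(N_3^*)^{\fr{d-1}{2}}$ bound by routine bookkeeping. The delicate case is $d=1$: here I pair $v$ (dispersion $\be$) with $w$ (dispersion $\ga$) and invoke the hypothesis $\be+\ga\neq 0$. Substituting $\xi_2=\xi_1-\xi_3$ into the relevant bilinear symbol $|\be\xi_2-\ga\xi_3|$ produces $|\be\xi_1-(\be+\ga)\xi_3|$, which is $\gtr N$ outside a thin set of $\xi_3$-values aligned with $\be\xi_1/(\be+\ga)$. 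On this main region Corollary \ref{bilin2}(ii) supplies the bilinear gain $N^{-\fr{1}{2}}$, recovering the $N^{-\fr{1}{2}}$ needed to match $(N_3^*)^{\fr{d-1}{2}}=1$ after summation over time slices; the thin exceptional set is controlled by Bernstein together with Strichartz. Carrying out the frequency decomposition so that the sum over sectors is uniformly controlled, and verifying that the exceptional set does not spoil the gain, is the principal technical step; it is precisely here that the hypothesis $\al-\ga=0$, $\be+\ga\neq 0$ enters the trilinear estimate.
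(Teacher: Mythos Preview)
Your overall architecture matches the paper: partition $[0,T]$ into $\sim N_1^*$ slices via $g_m$, treat $N_3^*\ll N_1^*\sim N_2^*$ by Corollary~\ref{bilin2}(i), and handle $N_1\sim N_2\sim N_3$ in $d\ge 2$ by H\"older combined with admissible Strichartz pairs. One minor point: in the non-resonant case you speak of ``squaring and summing via Cauchy--Schwarz''; in fact each slice already carries $T^{1/2}(N_1^*)^{-1}(N_3^*)^{(d-1)/2}$ and a direct sum over the $\sim N_1^*$ slices suffices.

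The genuine gap is the one-dimensional resonant case $N_1\sim N_2\sim N_3\sim N$. You propose to cover the set where $|\be\xi_2-\ga\xi_3|<H$ by ``Bernstein together with Strichartz''. This does not close: after the slice decomposition the good region yields $T^{1/2}N^{1/2}H^{-1/2}$, forcing $H\sim N$, while on the bad strip the best that Bernstein plus $L^2$-orthogonality over width-$H$ sub-intervals produces (putting one factor in $L_t^\infty L_x^2$, one in $L_t^2 L_x^2$, and the constrained one in $L_t^2 L_x^\infty$) is $H^{1/2}T$, hence $T N^{1/2}$ with $H\sim N$. No choice of $H$ removes the $N^{1/2}$ loss. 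The obstruction is structural: the ``thin'' strip is thin only relative to $\xi_2$ (or $\xi_3$), not in absolute measure, and almost-orthogonality over sub-intervals cannot be combined with $L_t^\infty$-based $F$-norm bounds without incurring a further factor $(N/H)^{1/2}$.

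The paper's remedy is different and essential: on the complementary set one has, by Lemma~\ref{modest2}(B), a lower bound on the resonance function
\[
\bl|(\tau_1+\al\xi_1^2)-(\tau_2+\be\xi_2^2)-(\tau_3+\ga\xi_3^2)\br|\gtrsim N^2,
\]
so at least one factor must carry modulation $\gtrsim N^2$. The $\ell^1$-in-modulation structure of $X_{N,\si}$ (via \eqref{highmod}) then converts this into an $N^{-1}$ gain, replacing the missing bilinear smoothing. This modulation argument is precisely where the $X^{s,b}$-type definition of the $F$-spaces enters the trilinear estimate; it is not reducible to physical-space Bernstein and Strichartz bounds. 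You should replace your treatment of the exceptional set by this high-modulation decomposition.
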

We prove this proposition by dividing it into the multi-dimensional case and the one-dimensional case.
We first prove the multi-dimensional case.
\begin{proof}[Proof of Proposition \ref{trilin1} in multi-dimensional case]
In this proof, we denote $L_t^qL_x^p:=L_t^q(\R; L_x^p(\R^d))$.
We divide the interval $[0, T]$ into $N_1^*$ pieces, that is, we put $g_m(t)$ by \eqref{g_m} for $m=1, \ldots, N_1^*$.
Then, note that we have \eqref{g_m1} with $t_m=(m-1)T/N_1^*$ for $N=N_1, N_2, N_3$ and $t\in\R$.
To prove \eqref{trilin1_1}, it suffices to show
\begin{align}
&I
:=\Bl|\int_\R\int_{\R^d}(\ove{g_m(t)P_{N_1}u})(g_m(t)P_{N_2}v)(g_m(t)P_{N_3}w)\, dxdt\Br| \label{trilin1_2}\\
&\les  T^{\theta}(N_1^*)^{-1}(N_3^*)^{\fr{d-1}{2}}\|P_{N_1}u\|_{F_{N_1, \al}(T)}\|P_{N_2}v\|_{F_{N_2, \be}(T)}\|P_{N_3}w\|_{F_{N_3, \ga}(T)} \nt
\end{align}
for $m=1, \ldots, N_1^*$.
We only prove the case of $N_j = N_j^*$ for $j=1,2,3$, but other cases are shown in the same manner.
\\
(i-i) The case of $N_1^* \sim N_2^*\gg N_3^*$.

From Corollaries \ref{str_cor} and \ref{bilin2}, we obtain 
\begin{align*}
I
&\les |\ope{supp}g_m|^{\fr{1}{2}}\|g_m(t)P_{N_1}u\|_{L_t^\infty L_x^2}\|(g_m(t)P_{N_2}v)(g_m(t)P_{N_3}w)\|_{L_t^2 L_x^2} \\
&\les  T^{\fr{1}{2}}(N_1^*)^{-1}(N_3^*)^{\fr{d-1}{2}}\|P_{N_1}u\|_{F_{N_1, \al}(T)}\|P_{N_2}v\|_{F_{N_2, \be}(T)}\|P_{N_3}w\|_{F_{N_3, \ga}(T)} .
\end{align*}
Hence, we obtain the desired estimate by taking $\theta=\fr{1}{2}$.

\noindent
(i-ii) The case of $N_1^* \sim N_3^*$.
\\
(i-iia) The case of $d=2, 3$.

Corollary \ref{str_cor} yields
\begin{align*}
I
&\les |\ope{supp}g_m|^{1-\fr{d}{4}}\|g_mP_{N_1}u\|_{L_t^{12/d} L_x^3}\|g_mP_{N_2}v\|_{L_t^{12/d} L_x^3}\|g_mP_{N_3}w\|_{L_t^{12/d} L_x^3} \\
&\les T^{1-\fr{d}{4}}(N_1^*)^{-1}(N_3^*)^{\fr{d}{4}}\|P_{N_1}u\|_{F_{N_1, \al}(T)}\|P_{N_2}v\|_{F_{N_2, \be}(T)}\|P_{N_3}w\|_{F_{N_3, \ga}(T)} .
\end{align*}
Since $\fr{d}{4}\le \fr{d-1}{2}$, the desired estimate is obtained by taking $\theta= 1-\fr{d}{4}$.
\\
(i-iib) The case of $d\ge 4$.

Let $\theta\in(0, 1)$ be chosen later.
Also, we put $p:=\fr{2d}{d-2(1-\theta)}$ and $q:=\fr{2}{1-\theta}$.
Then, $(q, p)$ is an admissible pair.
Thus, the H\"{o}lder inequality, the Sobolev embedding $H^{(d-4(1-\theta))/2}(\R^d)\hookrightarrow L^{d/(2(1-\theta))}(\R^d)$, and Corollary \ref{str_cor} yield
\begin{align*}
I
&\les |\ope{supp}g_m|^{\theta}\|g_mP_{N_1}u\|_{L_t^{q}L_x^{p}}\|g_mP_{N_2}v\|_{L_t^{q}L_x^{p}}\|g_mP_{N_3}w\|_{L_t^\infty L_x^{d/(2(1-\theta))}}\\
&\les T^{\theta}(N_1^*)^{-\theta}(N_3^*)^{\fr{d-4(1-\theta)}{2}}\|P_{N_1}u\|_{F_{N_1, \al}(T)}\|P_{N_2}v\|_{F_{N_2, \be}(T)}\|P_{N_3}w\|_{F_{N_3, \ga}(T)}.
\end{align*}
Since $N_1^*\sim N_3^*$, we obtain the desired bound by taking $\theta= \fr{1}{2}$.
\end{proof}

Next, we consider the one-dimensional case.
First, we prepare the following lemma.
This means that we can use at least one of the bilinear Strichartz estimate and the modulation estimate in any case.
\begin{lem}\label{modest2}
Let $\al, \be, \ga\in \R\sm\{0\}$ satisfy $\al-\ga=0$ and $\be+\ga\neq 0$.
We take $b\in\R\sm\{-2, -1\}$ such that $\be = \ga(b+1)$.
Let $(\tau_1, \xi_1), (\tau_2, \xi_2), (\tau_3, \xi_3)\in \R\times\R$ satisfy $\tau_1 - \tau_2 -\tau_3 =0$ and $\xi_1-\xi_2-\xi_3=0$.
Then, the followings hold.
\\
(A) If $|\fr{\xi_3}{\xi_2} - \fr{b}{2}|\le \fr{1}{4}|b+2|$, then we have
\[
|\be \xi_2- \ga\xi_3|\ge |\ga||b+2||\xi_2|/4.
\]
(B) If $|\fr{\xi_3}{\xi_2} - \fr{b}{2}|\ge \fr{1}{4}|b+2|$, then we have 
\[
\bl|(\tau_1+\al\xi_1^2) - (\tau_2+\be\xi_2^2) - (\tau_3+\ga\xi_3^2)\br|
\ge |\ga||b+2||\xi_2|^2/2.
\]
\end{lem}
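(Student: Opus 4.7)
The strategy is to reduce both quantities to closed-form algebraic expressions in $\xi_2$, $\xi_3$, and the parameter $b$, exploiting the four constraints $\al=\ga$, $\tau_1=\tau_2+\tau_3$, $\xi_1=\xi_2+\xi_3$, and $\be=\ga(b+1)$. I would first dispose of the degenerate case $\xi_2=0$: both conclusions then reduce to $|\,\cdot\,|\ge 0$, which holds trivially. For the remainder of the argument I may assume $\xi_2\neq 0$, so that $\xi_3/\xi_2$ is well-defined.

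For part (A), the identity $\be\xi_2-\ga\xi_3 = \ga\xi_2(b+1-\xi_3/\xi_2)$ is immediate from $\be=\ga(b+1)$. Splitting $b+1 = (b+2)/2 + b/2$, I would rewrite
\[
b+1-\frac{\xi_3}{\xi_2} = \frac{b+2}{2} - \Bl(\frac{\xi_3}{\xi_2}-\frac{b}{2}\Br),
\]
so the hypothesis of (A) combined with the triangle inequality forces $|b+1-\xi_3/\xi_2|\ge |b+2|/2 - |b+2|/4 = |b+2|/4$. Multiplying through by $|\ga||\xi_2|$ yields the bound claimed in (A).

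For part (B), I would use $\al=\ga$ together with $\tau_1=\tau_2+\tau_3$ to cancel the modulation variables, reducing the resonance function to $\ga\xi_1^2-\be\xi_2^2-\ga\xi_3^2$. Substituting $\xi_1=\xi_2+\xi_3$ and the identity $\be-\ga=\ga b$, this expands as
\[
\ga\xi_1^2-\be\xi_2^2-\ga\xi_3^2 = (\ga-\be)\xi_2^2+2\ga\xi_2\xi_3 = -\ga b\xi_2^2+2\ga\xi_2\xi_3 = 2\ga\xi_2^2\Bl(\frac{\xi_3}{\xi_2}-\frac{b}{2}\Br),
\]
and the hypothesis of (B) then produces the stated lower bound directly. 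The lemma is essentially a routine algebraic manipulation, and there is no real obstacle; the only conceptual point is to recognize that the normalized variable $\xi_3/\xi_2-b/2$ simultaneously governs both the bilinear transversality factor $\be\xi_2-\ga\xi_3$ and the trilinear resonance function, which is precisely why the threshold $|b+2|/4$ cleanly partitions every frequency configuration into a transversal regime (A) and a high-modulation regime (B).
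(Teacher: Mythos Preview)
Your proof is correct and follows essentially the same route as the paper: both derive the identity $|\be\xi_2-\ga\xi_3|=|\ga||\xi_2|\bigl|\tfrac{b+2}{2}-(\tfrac{\xi_3}{\xi_2}-\tfrac{b}{2})\bigr|$ for (A) and the resonance identity $(\tau_1+\al\xi_1^2)-(\tau_2+\be\xi_2^2)-(\tau_3+\ga\xi_3^2)=2\ga\xi_2^2(\tfrac{\xi_3}{\xi_2}-\tfrac{b}{2})$ for (B), then read off the bounds from the respective hypotheses. Your treatment is slightly more detailed, and your explicit handling of the degenerate case $\xi_2=0$ is a reasonable addition, though the hypotheses of the lemma implicitly presuppose $\xi_2\neq 0$.
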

\begin{proof}
We have $|\be \xi_2- \ga\xi_3|= |\ga||\xi_2||\fr{b+2}{2}-(\fr{\xi_3}{\xi_2}-\fr{b}{2})|$,
which yields (A).
Also, $\al-\ga=0$, $\tau_1 - \tau_2 -\tau_3 =0$, and $\xi_1-\xi_2-\xi_3=0$ yields
\begin{equation}\label{modest2_1}
\bl|(\tau_1+\al\xi_1^2) - (\tau_2+\be\xi_2^2) - (\tau_3+\ga\xi_3^2)\br|
= 2|\ga||\xi_2|^2\Bl|\fr{\xi_3}{\xi_2}-\fr{b}{2}\Br|.
\end{equation}
Thus, we obtain (B).
\end{proof}
\begin{proof}[Proof of Proposition \ref{trilin1} in one-dimensional case]
Also in this case, we show \eqref{trilin1_2}.
For $m=1, \ldots, N_1^*$, we set $g_m(t)$ by \eqref{g_m}.

\noindent
(i) The case of $N_1^* \sim N_2^*\gg N_3^*$.

In this case, we can obtain the desired estimate by the same argument as (i-i) in the proof of the multi-dimensional case.
\\
(ii) The case of $N_1^* \sim N_3^*$.

The case of $N_3^*=1$ is easy because we can obtain the desired estimate from the H\"older and Bernstein inequalities.
We consider the case of $N_3^*\in 2^{\N}$.
Let $b$ be what is taken as in Lemma \ref{modest2}.

Let $A:= \{(\xi_2, \xi_3)\in \R^2 \mid |\fr{\xi_3}{\xi_2} - \fr{b}{2}|\le \fr{1}{4}|b+2|\}$, $A^c$ be the complement of $A$, and $I$ be what defined in \eqref{trilin1_2}.
Then, we have
\begin{align}
I&= c\iint_{\substack{\xi_1-\xi_2-\xi_3=0 \\ \tau_1-\tau_2-\tau_3=0}} (\mb{1}_{A}(\xi_2, \xi_3) + \mb{1}_{A^c}(\xi_2, \xi_3)) \nt\\
&\hs{20mm}\times\ove{\cf_{t ,x}[g_mP_{N_1}u](\tau_1, \xi_1)}\cf_{t ,x}[g_mP_{N_2}v](\tau_2, \xi_2)\cf_{t ,x}[g_mP_{N_3}w](\tau_3, \xi_3) \nt\\
&=: I_A + I_{A^c}. \nt
\end{align}
It suffices to consider the estimate of $I_A$ and $I_{A^c}$.
Lemma \ref{modest2} yields $A\subset \{(\xi_2, \xi_3)\in \R^2 \mid |\be \xi_2 -\ga\xi_3|\gtr N_2\}$.
Thus, it holds from Corollary \ref{bilin2}, Plancherel's theorem, and the H\"older inequality that
\begin{align}
|I_A| 
&\les |\ope{supp} g_m|^{1/2}\|g_mP_{N_1}u\|_{L_{t}^\infty L_{x}^2} \nt\\
&\  \times\Bl\|\iint_{\substack{\xi_2+\xi_3=\xi_1 \\ \tau_2+\tau_3=\tau_1}}\mb{1}_{A}(\xi_2, \xi_3)\cf_{t, x}[g_mP_{N_2}v](\tau_2, \xi_2)\cf_{t, x}[g_mP_{N_3}w](\tau_3, \xi_3) \Br\|_{L_{\tau_1}^2L_{\xi_1}^2} \nt\\
&\les T^{1/2}(N_1^*)^{-1/2}N_2^{-1/2}\|P_{N_1}u\|_{F_{N_1, \al}(T)}\|P_{N_2}v\|_{F_{N_2, \be}(T)}\|P_{N_3}w\|_{F_{N_3, \ga}(T)}. \nt
\end{align}
Since $N_1^*\sim N_3^*$, $I_A$ is bounded by the right-hand side of \eqref{trilin1_2} if $\theta=1/2$.

Next, we consider the part of $I_{A^c}$.
Let $n_0\in \N$ satisfy $|\ga||b+2|\gg 2^{-n_0}$ and $M:= 2^{-n_0}(N_1^*)^2\in 2^{\mathbb{Z}}$.
Also, for $\si\in \R\sm\{0\}$ we set
\[
\mc{F}_{t, x}[R_{\ge M}^{\si}f](\tau, \xi):=\mb{1}_{\{|\tau+\si|\xi|^2|\ge M\}}(\tau, \xi)\mc{F}_{t, x}[f](\tau, \xi)
\]
and $R_{< M}^{\si}=1-R_{\ge M}^{\si}$.
From the definition of $X_{N', \si}$, we have
\begin{equation}\label{highmod}
\|R_{\ge M}^{\si}f\|_{L_t^2L_x^2}\les M^{-1/2}\|\cf_{t, x}[f]\|_{X_{N', \si}}
\end{equation}
for $N'\in 2^{\N_0}$, $\si\in\R\sm\{0\}$, and $f\in L_t^2(\R; L_x^2(\R))$ satisfying $\cf_{t, x}[f]\in X_{N', \si}$.
We divide $I_{A^c}$ into $8$ pieces of the integral
\begin{align*}
&I_{A^c}^{*_1, *_2, *_3}
:=c\iint_{\substack{\xi_1-\xi_2-\xi_3=0 \\ \tau_1-\tau_2-\tau_3=0}} \mb{1}_{A^c}(\xi_2, \xi_3)\ove{\cf_{t ,x}[R_{*_1}^{\al}(g_mP_{N_1}u)](\tau_1, \xi_1)} \nt\\
&\hs{25mm}\times\cf_{t ,x}[R_{*_2}^{\be}(g_mP_{N_2}v)](\tau_2, \xi_2)\cf_{t ,x}[R_{*_3}^{\ga}(g_mP_{N_3}w)](\tau_3, \xi_3), \nt
\end{align*}
where $*_1, *_2, *_3$ represent one of $``<M"$ or $``\ge M"$.
Lemma \ref{modest2} (B) and the triangle inequality yield $\max\{|\tau_1+\al|\xi_1|^2|, |\tau_2+\be|\xi_2|^2|, |\tau_3+\ga|\xi_3|^2|\}\ge M$ if $\tau_1- \tau_2- \tau_3=0$, $\xi_1- \xi_2- \xi_3=0$, and $(\xi_2, \xi_3)\in A^c$.
Hence, Plancherel's theorem yields $I_{A^c}^{*_1, *_2, *_3} =0$ when $*_1=*_2=*_3=``<M."$ 
Thus, at least one of $*_1, *_2, *_3$ becomes $``\ge M."$
We may assume that $*_1$ is $``\ge M."$
Other cases are shown in the same manner.
From Plancherel's theorem, the H\"older and Young inequalities, Corollary \ref{str_cor}, and \eqref{highmod}, we obtain
\begin{align*}
&|I_{A^c}^{*_1, *_2, *_3}| \\
&=c\Bl|\int_{\R}\Bl(\int_{\xi_1-\xi_2-\xi_3=0} \mb{1}_{A^c}(\xi_2, \xi_3)\ove{\cf_{x}[R_{*_1}^{\al}(g_mP_{N_1}u)](t, \xi_1)} \nt\\
&\hs{25mm}\times\cf_{x}[R_{*_2}^{\be}(g_mP_{N_2}v)](t, \xi_2)\cf_{x}[R_{*_3}^{\ga}(g_mP_{N_3}w)](t, \xi_3)\Br)\,dt\Br| \nt\\
&\les \bl\|\|\cf_{x}[R_{*_1}^{\al}(g_mP_{N_1}u)]\|_{L_\xi^2}\|\cf_{x}[R_{*_2}^{\be}(g_mP_{N_2}v)]\|_{L_\xi^2}\|\cf_{x}[R_{*_3}^{\ga}(g_mP_{N_3}w)]\|_{L_\xi^1} \br\|_{L_t^1} \nt\\
&\le \|\cf_{x}[R_{*_1}^{\al}(g_mP_{N_1}u)]\|_{L_\tau^2L_\xi^2}\|\cf_{x}[R_{*_2}^{\be}(g_mP_{N_2}v)]\|_{L_\tau^2L_\xi^2}\|\cf_{x}[R_{*_3}^{\ga}(g_mP_{N_3}w)]\|_{L_t^\infty L_\xi^1} \nt\\
&\les |\ope{supp}g_m|^{1/2}N_3^{1/2}\|R_{*_1}^{\al}(g_mP_{N_1}u)\|_{L_t^2 L_x^2}\|g_mP_{N_2}v\|_{L_t^\infty L_x^2}\|g_mP_{N_3}w\|_{L_t^\infty L_x^2} \\
&\les T^{1/2}M^{-1/2}\|P_{N_1}u\|_{F_{N_1, \al}(T)}\|P_{N_2}v\|_{F_{N_2, \be}(T)}\|P_{N_3}w\|_{F_{N_3, \ga}(T)}.
\end{align*}
Since $M\sim (N_1^*)^2$, we obtain the desired estimate.
\end{proof}

\subsection{Some auxiliary estimates}
In this subsection, we prove two quadratic estimates to obtain the energy estimates and estimates of the difference of the solutions to \eqref{CC} and \eqref{K-CC}.
In the following, for $T\in (0, 1]$, $N\in 2^{\N_0}$, $\si_1, \si_2\in \R\sm\{0\}$, and $\mb{f}=(f_1, f_2)\in \til\bff(T):= F_{\si_1}^0(T)\times F_{\si_2}^0(T)$, we denote
\[
\|P_N\mb{f}\|_{\til\bff_{N}(T)}
:= \|P_Nf_1\|_{F_{N, \si_1}(T)} + \|P_Nf_2\|_{F_{N, \si_2}(T)}.
\]

\begin{lem}\label{qu1}
Let $d\in \N$, $\si_1, \si_2\in \R\sm\{0\}$, and $\til s\ge s\ge \fr{1}{2}(d+1)$, then we have
\begin{align}
&N^{\til s-1}\|P_N((\pa_{x_j}f_1)f_2)\|_{L_T^\infty L_x^2} \nt\\
&\les \|\mb{f}\|_{\til\bff^{s}(T)}\sum_{N_1\sim N}N_1^{\til s}\|P_{N_1}\mb{f}\|_{\til\bff_{N_1}(T)}
+ N^{-(s-\fr{d}{2})}\|\mb{f}\|_{\til\bff^{\til s}(T)}\|\mb{f}\|_{\til\bff^{s}(T)} \nt
\end{align}
for $N\in 2^{\N_0}$, $1\le j \le d$, $T>0$ and $\mb{f}=(f_1, f_2)\in \til \bff^{\til s}:=F_{\si_1}^{\til s}(T)\times F_{\si_2}^{\til s}(T)$.
\end{lem}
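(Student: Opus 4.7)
The plan is to perform a standard Littlewood--Paley decomposition of both factors and separate the frequency interactions, using Bernstein's inequality together with the embedding $F_{N', \si}(T) \hookrightarrow L_T^\infty L_x^2$ from Corollary \ref{str_cor} to pass from $L_T^\infty L_x^2$ norms to the $F^s_{\si}(T)$-type norms. Writing $f_k = \sum_{N_k} P_{N_k} f_k$ for $k = 1, 2$, the product $P_N((\pa_{x_j} P_{N_1}f_1)P_{N_2}f_2)$ is nonzero only in the following three regimes: (I) $N_1 \sim N$ and $N_2 \lesssim N$ (high-low with derivative on the high piece); (II) $N_2 \sim N$ and $N_1 \lesssim N$ (high-low with derivative on the low piece); and (III) $N_1 \sim N_2 \gtrsim N$ (high-high). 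Cases (I) and (II) will give the first summand on the right-hand side, while case (III) will produce the $N^{-(s-d/2)}$ term.

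In case (I), H\"older and the Bernstein inequality $\|P_{N_2} f_2\|_{L_x^\infty} \les N_2^{d/2} \|P_{N_2} f_2\|_{L_x^2}$ give
\[
\|P_N((\pa_{x_j} P_{N_1}f_1)P_{N_2}f_2)\|_{L_T^\infty L_x^2}
\les N_1 N_2^{d/2} \|P_{N_1} f_1\|_{F_{N_1,\si_1}(T)} \|P_{N_2} f_2\|_{F_{N_2,\si_2}(T)}.
\]
After multiplying by $N^{\til s-1} \sim N_1^{\til s - 1}$ and summing $N_2 \le N_1$, Cauchy--Schwarz together with the elementary bound $\sum_{N_2} N_2^{d-2s} < \infty$ (valid for $s > d/2$, which follows from the hypothesis $s\ge(d+1)/2$) absorbs the $N_2^{d/2}$ factor into $\|\mb f\|_{\til \bff^s(T)}$, leaving exactly the contribution $\sum_{N_1\sim N} N_1^{\til s}\|P_{N_1}\mb f\|_{\til \bff_{N_1}(T)}$. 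Case (II) is entirely analogous: now $\|\pa_{x_j}P_{N_1}f_1\|_{L_x^\infty} \les N_1^{1+d/2}\|P_{N_1}f_1\|_{L_x^2}$, and since $N_1 \le N_2 \sim N$, the factor $N_1^{1+d/2}/N_2 \le N_1^{d/2}$ is summable against $N_1^s\|P_{N_1}f_1\|_{F_{N_1,\si_1}(T)}$ by the same Cauchy--Schwarz step.

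In the high-high case (III), I would use the dual Bernstein inequality on the outer projection,
\[
\|P_N g\|_{L_x^2} \les N^{d/2}\|P_N g\|_{L_x^1},
\]
followed by plain Cauchy--Schwarz inside the $L_x^1$ norm, giving
\[
\|P_N((\pa_{x_j} P_{N_1}f_1)P_{N_2}f_2)\|_{L_T^\infty L_x^2}
\les N^{d/2} N_1 \|P_{N_1} f_1\|_{F_{N_1,\si_1}(T)} \|P_{N_2} f_2\|_{F_{N_2,\si_2}(T)}.
\]
Multiplying by $N^{\til s - 1}$, the resulting factor equals $N^{-(s-d/2)} (N/N_1)^{\til s + s - 1} N_1^{\til s} N_2^s$, since $N_1 \sim N_2$. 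Because $\til s + s - 1 \ge 2\cdot\tfrac{d+1}{2} - 1 = d \ge 1 > 0$ under our assumptions, $(N/N_1)^{\til s + s - 1} \le 1$, and then Cauchy--Schwarz in $N_1$ (using the $\ell^2_{N_1}$ structure defining the $F^{\til s}$ and $F^s$ norms, together with the fact that $N_2$ ranges over only finitely many dyadic integers comparable to $N_1$) yields the claimed bound $N^{-(s-d/2)}\|\mb f\|_{\til\bff^{\til s}(T)}\|\mb f\|_{\til\bff^s(T)}$.

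The main delicacy is the high-high regime, where the derivative sits at the much higher scale $N_1 \sim N_2 \gg N$ and one cannot afford to lose the full $N_1$ from $\pa_{x_j}$. The trick is to buy back the decay via Bernstein on the $P_N$ projection, producing the small factor $N^{d/2}/N_1 \cdot (N/N_1)^{\til s + s - 1} \cdot N^{-(s-d/2)}$, which is where the condition $s \ge \tfrac{d+1}{2}$ (and the corresponding $\til s + s \ge d+1$) is used to guarantee both the summability of dyadic tails and the nonnegativity of the decay exponent. Everything else is a routine orchestration of Littlewood--Paley, H\"older, Bernstein, and Cauchy--Schwarz.
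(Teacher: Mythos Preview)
Your proof is correct and follows essentially the same approach as the paper. The only difference is cosmetic: the paper records a single unified estimate
\[
N^{\til s-1}\|P_N((\pa_{x_j}P_{N_1}f_1)(P_{N_2}f_2))\|_{L_T^\infty L_x^2}
\les N^{\til s-1}\min\{N, N_1, N_2\}^{d/2}N_1\|P_{N_1}f_1\|_{F_{N_1,\si_1}(T)}\|P_{N_2}f_2\|_{F_{N_2,\si_2}(T)}
\]
and then specializes it to the three frequency regimes, whereas you apply Bernstein to the appropriate factor separately in each case; the resulting arithmetic and the use of $s>d/2$ for summability and $\til s+s-1\ge 0$ in the high-high case are identical.
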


\begin{proof}
The H\"older and Bernstein inequalities and Corollary \ref{str_cor} yield
\begin{align}
&N^{\til s-1}\|P_N((\pa_{x_j}P_{N_1}f_1)(P_{N_2}f_2))\|_{L_T^\infty L_x^2} \label{pr_qu1_1}\\
&\les N^{\til s-1}\min\{N, N_1, N_2\}^{\fr{d}{2}}N_1\|P_{N_1}f_1\|_{F_{N_1, \si_1}(T)}\|P_{N_2}f_2\|_{F_{N_2, \si_2}(T)}. \nt
\end{align}
(i) The case of $N\sim N_1\gg N_2$. 

In this case, we have $N^{\til s-1}\min\{N, N_1, N_2\}^{\fr{d}{2}}N_1\les N^{\til s}N_2^{\fr{d}{2}}$.
Therefore, if $s>\fr{d}{2}$, then we obtain
\[
\sum_{N\sim N_1\gg  N_2}	(\tm{L.H.S. of \eqref{pr_qu1_1}})
\les \|\mb{f}\|_{\til \bff^{s}(T)}\sum_{N_1\sim N}N_1^{\til s}\|P_{N_1}\mb{f}\|_{\til \bff_{N_1}(T)}.
\]

\noindent
(ii) The case of $N\sim N_2\gg N_1$.

This case is similar to the case of (i). 

\noindent
(iii) The case of $N_1\sim N_2\gtr N$. 

It holds that $N^{\til s-1}\min\{N, N_1, N_2\}^{\fr{d}{2}}N_1\les N^{-(s-\fr{d}{2})}N_1^{\til s}N_2^s$ since $\til s+ \fr{d}{2}\ge 1$.
Thus, if $s> \fr{d}{2}$, then we have
\[
\sum_{N_1\sim N_2\gtr N} (\tm{L.H.S. of \eqref{pr_qu1_1}})
\les N^{-(s-\fr{d}{2})}\|\mb{f}\|_{\til \bff^{\til s}(T)}\|\mb{f}\|_{\til \bff^{s}(T)}.
\]

From  (i)--(iii), we obtain the desired estimate.
\end{proof}
From the similar argument as Lemma \ref{qu1}, the following lemma holds.
We omit the proof.

\begin{lem}\label{qu2}
Let $d\in \N$, $\si_1, \si_2\in \R\sm\{0\}$, $N\in 2^{\N_0}$, and $s>\fr{d}{2}$, then we have
\begin{align}
\|P_N(f_1 f_2)\|_{L_T^\infty L_x^2}
&\les \sum_{\{i_1, i_2\}=\{1, 2\}}\min\Bl\{\sum_{N_{i_1}\sim N}\|P_{N_{i_1}}f_{i_1}\|_{F_{N_{i_1}, \si_{i_1}}(T)}\|f_{i_2}\|_{F_{\si_{i_2}}^{s}(T)}, \nt\\
&\hs{30mm}\sum_{N_{i_1}\sim N}N_{i_1}^s\|P_{N_{i_1}}f_{i_1}\|_{F_{N_{i_1}, \si_{i_1}}(T)}\|f_{i_2}\|_{F_{\si_{i_2}}^{0}(T)}\Br\} \nt\\
&+ N^{-(s-\fr{d}{2})}\min\bl\{\|f_1\|_{F_{\si_1}^{0}(T)}\|f_2\|_{F_{\si_2}^{s}(T)}, \|f_1\|_{F_{\si_1}^{s}(T)}\|f_2\|_{F_{\si_2}^{0}(T)}\br\} \nt
\end{align}
for $N\in 2^{\N_0}$ $T\in (0, 1]$, $f_1\in F_{\si_1}^s(T)$, and $f_2\in F_{\si_2}^s(T)$.
\end{lem}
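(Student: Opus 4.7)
The plan is to mimic the proof of Lemma \ref{qu1}: decompose $f_1, f_2$ by Littlewood--Paley pieces, split into the three standard frequency interactions, and apply H\"older and Bernstein together with the embedding $\|u\|_{L_T^\infty L_x^2}\les \|u\|_{F_{N,\si}(T)}$ from Corollary \ref{str_cor}. Concretely, writing $f_k=\sum_{N_k}P_{N_k}f_k$ and using $P_N(P_{N_1}f_1\cd P_{N_2}f_2)\neq 0$ only when $(N_1,N_2)$ falls into (i) $N\sim N_1\gg N_2$, (ii) $N\sim N_2\gg N_1$, or (iii) $N_1\sim N_2\gtr N$, the basic building block is
\[
\|P_N(P_{N_1}f_1\cd P_{N_2}f_2)\|_{L_T^\infty L_x^2}\les \min\{N,N_1,N_2\}^{\fr{d}{2}}\|P_{N_1}f_1\|_{F_{N_1,\si_1}(T)}\|P_{N_2}f_2\|_{F_{N_2,\si_2}(T)},
\]
obtained by placing the lowest frequency factor in $L_x^\infty$ via Bernstein and the other two factors in $L_x^2$.

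In case (i), $\min\{N,N_1,N_2\}^{\fr{d}{2}}=N_2^{\fr{d}{2}}$. Writing $N_2^{\fr{d}{2}}=N_2^{s}\cd N_2^{\fr{d}{2}-s}$ and summing over $N_2\ll N_1$ by the Cauchy--Schwarz inequality (which converges thanks to $s>\fr{d}{2}$), we obtain the contribution
\[
\sum_{N_1\sim N}\|P_{N_1}f_1\|_{F_{N_1,\si_1}(T)}\|f_2\|_{F_{\si_2}^{s}(T)},
\]
which is the ``$i_1=1$'' term of the first choice in the minimum. Alternatively, factoring $N_2^{\fr{d}{2}}\le N_1^{s}\cd N_2^{\fr{d}{2}-s}$ (valid here because $N_2\ll N_1$ and $s>\fr{d}{2}$) yields the ``$i_1=1$'' term of the second choice, namely $\sum_{N_1\sim N}N_1^{s}\|P_{N_1}f_1\|_{F_{N_1,\si_1}(T)}\|f_2\|_{F_{\si_2}^{0}(T)}$. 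Case (ii) is identical with the roles of $f_1$ and $f_2$ interchanged, producing the $i_1=2$ contributions.

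For case (iii), $\min\{N,N_1,N_2\}^{\fr{d}{2}}=N^{\fr{d}{2}}=N^{-(s-\fr{d}{2})}\cd N^{s}\les N^{-(s-\fr{d}{2})}N_1^{s}$ using $N_1\gtr N$. Summing over $N_1\sim N_2\gtr N$ by the Cauchy--Schwarz inequality gives the tail term
\[
N^{-(s-\fr{d}{2})}\|f_1\|_{F_{\si_1}^{s}(T)}\|f_2\|_{F_{\si_2}^{0}(T)},
\]
and placing the $N^{s}$ factor on $f_2$ instead yields the other member of the final minimum. The main obstacle, if any, is bookkeeping: one has to repeat the argument four times (two cases $\times$ two ways of distributing regularity) to assemble all branches of the two minima, but there is no new analytic ingredient beyond those already used in Lemma \ref{qu1}, and the hypothesis $s>\fr{d}{2}$ is precisely what is needed for every dyadic summation above.
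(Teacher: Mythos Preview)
Your proposal is correct and is precisely the approach indicated by the paper, which explicitly omits the proof with the remark that it follows ``from the similar argument as Lemma \ref{qu1}.'' Your case splitting, use of the building block from H\"older--Bernstein and Corollary \ref{str_cor}, and the two alternative placements of the $s$ derivatives to produce both branches of each minimum are exactly what is required.
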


\section{Energy estimate for a solution}
\subsection{Construction of the modified energy}
To prove Lemma \ref{energy1}, we add the correction terms which depend on the frequency.
In the following, we set $\int f\,dx:= \int_{\R^d}f\,dx$.
For the solution $(u_K, v_K, w_K)$ to \eqref{K-CC}, we can calculate as follows:
\begin{align}
&\fr{1}{2}\fr{d}{dt}\|P_N u_{K}\|_{L^2}^2 
+\ope{Im} \int   (\na\cd  P_Nw_{K}) v_{K} \cd \ove{ P_N u_{K}}\,dx \label{ene1}\\
&= -\ope{Im} \int   ([P_N, v_{K}]\na\cd  w_{K}) \cd \ove{ P_N u_{K}}\,dx, \nt
\end{align}
where $[\cd, \cd]$ denotes the commutator and we used Plancherel's theorem and $J_{\le K} u_K= u_K$.
Similarly, we have
\begin{align}
&\fr{1}{2}\fr{d}{dt}\|P_N v_{K}\|_{L^2}^2
+\ope{Im} \int   (\ove{\na\cd  P_Nw_{K}}) u_{K} \cd \ove{ P_N v_{K}}\,dx  \label{ene2}\\
&=-\ope{Im} \int   ([P_N, u_{K}]\ove{\na\cd  w_{K}}) \cd \ove{ P_N v_{K}}\,dx, \nt
\end{align}
\begin{align}
&\fr{1}{2}\fr{d}{dt}\|P_N w_{K}\|_{L^2}^2 \label{ene3}
 +\ope{Im} \int  (u_{K}\cd \ove{P_N  v_{K}})\ove{\na \cd P_N  w_{K}}\,dx \\
&\quad+\ope{Im} \int  (P_N  u_{K} \cd \ove{v_{K}})\ove{\na \cd P_N  w_{K}}\,dx \nt\\
&=-\ope{Im} \int  (\com(P_N, u_{K}, \ove{v_{K}}))\ove{\na \cd P_N  w_{K}}\,dx, \nt
\end{align}
where $\com(P_N, f, g)$ is defined by
\begin{equation}\label{com}
\com(P_N, f, g):= P_N(f\cd g) - P_Nf\cd g -f\cd P_{N}g.
\end{equation}
Thus, the right-hand side of \eqref{ene3} corresponds to the right-hand sides of \eqref{ene1} and \eqref{ene2}.

The second term on the left-hand side of \eqref{ene1} is canceled out by the third term on the left-hand side of \eqref{ene3} since they have the different sign.
To cancel the second term on the left-hand sides of \eqref{ene2} and \eqref{ene3} which have the same sign, for $N\in 2^{\N}$, we define the correction term $M_N$ by
\begin{equation}\label{ene4}
M_{N}(f, g, h)
:= \ope{Re}\int(f \cd \ove{P_N g}) \ove{\na \cd\Del ^{-1}P_N h}\, dx.
\end{equation}
We note that $\Del^{-1}P_{N}$ is well-defined when $N\in 2^{\N}$.
Also, the derivative in the second term on the left-hand sides of \eqref{ene2} and \eqref{ene3} is harmless in the case of $N=1$.

We have
\begin{align}
&M_{N}(\pt u_{K},  v_{K},  w_{K})  \label{ene5}\\
&=-\al \ope{Im}\int(\Del  u_{K} \cd \ove{P_N  v_{K}})\ove{\na \cd\Del^{-1} P_N  w_{K}}\, dx  \nt\\
&\quad - \ope{Im}\int(J_{\le K}((\na \cd  w_{K})  v_{K}) \cd \ove{P_N  v_{K}})\ove{\na \cd\Del^{-1} P_N  w_{K}}\, dx,  \nt
\end{align}
\begin{align}
&M_{N}(u_{K},  v_{K}, \pt w_{K}) 
-\ga \ope{Im} \int  ( u_{K} \cd \ove{P_N v_{K}})\ove{\na \cd P_N  w_{K}}\,dx \label{ene6}\\
&= - \ope{Im}\int(u_{K}\cd \ove{P_N  v_{K}})\ove{P_N J_{\le K} ( u_{K} \cd \ove{v_{K}})}\, dx. \nt
\end{align}
Also, the integration by parts yields
\begin{align}
&M_{N}(u_{K},  \pt v_{K},  w_{K}) 
-\be \ope{Im} \int  ( u_{K} \cd \ove{P_N v_{K}})\ove{\na \cd P_N  w_{K}}\,dx \label{ene7}\\
&= \be \ope{Im}\int(\Del  u_{K} \cd \ove{P_N  v_{K}})\ove{\na \cd \Del^{-1} P_N  w_{K}}\, dx \nt\\
&\quad\quad +2\be \sum_{j=1}^d \ope{Im}\int(\pa_{x_j}  u_{K} \cd \ove{P_N  v_{K}})\ove{\na \cd \pa_{x_j} \Del^{-1}P_N  w_{K}}\, dx \nt\\
&\quad\quad+ \ope{Im}\int( u_{K}\cd \ove{P_N J_{\le K} (( \ove{\na \cd  w_{K}}) u_{K})})\ove{\na \cd\Del^{-1} P_N  w_{K}}\, dx.  \nt
\end{align}

Therefore, we obtain
\begin{align}
&\fr{d}{dt}M_{N}(u_{K},  v_{K},  w_{K}) 
-(\be+\ga)\ope{Im} \int  ( u_{K} \cd \ove{P_N v_{K}})\ove{\na \cd P_N  w_{K}}\,dx \nt\\
&= (\textrm{R.H.S. of \eqref{ene5}--\eqref{ene7}}), \nt
\end{align}
which yields
\begin{align}
&\fr{1}{2}\fr{d}{dt}\bl(\|(P_N u_K, P_N v_K, P_N w_K)\|_{\ch^0}^2  + \fr{4}{\be + \ga}M_{N}(u_K,  v_K,  w_K)\br) \label{ene9} \\
&= (\textrm{R.H.S. of \eqref{ene1}--\eqref{ene3}}) + \fr{4}{\be+\ga}(\textrm{R.H.S. of \eqref{ene5}--\eqref{ene7}}). \nt
\end{align}

To guarantee the coercivity of the modified energy, we add furthur correction terms as follows:
\begin{equation}\label{ene10}
E_N^{\til s}(\bfu_K) 
:= N^{2\til s} (1 + N^{-1}\til C\|\bfu_K\|_{\ch^{s_0}}^2)\|P_N\bfu_K\|_{\ch^{0}}^2 + \fr{4}{\be + \ga}N^{2\til s}M_{N}(u_K, v_K, w_K) 
\end{equation}
where $\bfu_K=(u_K, v_K, w_K)$, $s_0:= \fr{d}{2} + \fr{1}{2}(s - \fr{1}{2}(d+1))$, and $\til C=\til C(s_0)>0$.
This modified energy has the coercivity.
\begin{lem}\label{coer1}
Let $d\in \N$ and $\til s\ge s> \fr{1}{2}(d+1)$.
Then, there exist $\til C>0$ such that $E_N^{\til s}(\bfu) \gtr N^{2\til s} \|P_N\bfu\|_{\ch^{0}}^2$ for $N\in 2^{\N}$ and $\bfu=(u, v, w)\in \ch^{\til s}$.
\end{lem}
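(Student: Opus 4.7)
The plan is to show that the correction term $\fr{4}{\be+\ga} N^{2\til s} M_N(u,v,w)$ in $E_N^{\til s}$ is dominated by the positive quadratic part, provided $\til C$ is chosen sufficiently large. I would begin with the pointwise bound
$$
|M_N(u,v,w)| \le \|u\|_{L^\infty}\|P_N v\|_{L^2}\|\na\cd\Del^{-1}P_N w\|_{L^2} \les N^{-1}\|u\|_{L^\infty}\|P_N v\|_{L^2}\|P_N w\|_{L^2},
$$
obtained from H\"older's inequality together with the multiplier estimate $\|\na\cd\Del^{-1}P_N h\|_{L^2}\les N^{-1}\|P_N h\|_{L^2}$, which is valid precisely because $N\in 2^{\N}$ forces $\ope{supp}\psi_N$ to stay bounded away from $\xi=0$. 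The hypothesis $s>\fr{d+1}{2}$ gives $s_0=\fr{d}{2}+\fr{1}{2}(s-\fr{d+1}{2})>\fr{d}{2}$, so the Sobolev embedding $H^{s_0}\hookrightarrow L^\infty$ yields $\|u\|_{L^\infty}\les \|\bfu\|_{\ch^{s_0}}$, and combined with $\|P_N v\|_{L^2}\|P_N w\|_{L^2}\le \fr{1}{2}\|P_N\bfu\|_{\ch^0}^2$ this produces
$$
\fr{4}{|\be+\ga|} N^{2\til s}|M_N(u,v,w)| \le C_0\, N^{2\til s-1}\|\bfu\|_{\ch^{s_0}}\|P_N\bfu\|_{\ch^0}^2
$$
for some $C_0 = C_0(d,\be,\ga)>0$.

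To absorb this into $E_N^{\til s}$, I would apply Young's inequality $C_0 x\le \til C x^2+\fr{C_0^2}{4\til C}$ with $x=\|\bfu\|_{\ch^{s_0}}$ and then multiply by $N^{-1}$, yielding
$$
C_0 N^{-1}\|\bfu\|_{\ch^{s_0}} \le \til C N^{-1}\|\bfu\|_{\ch^{s_0}}^2 + \fr{C_0^2}{4\til C}\,N^{-1}.
$$
Choosing $\til C = \til C(d,\be,\ga,s_0) \ge \fr{C_0^2}{2}$ (so that $\fr{C_0^2}{4\til C}N^{-1}\le \fr{1}{2}$ uniformly in $N\ge 1$), multiplication by $N^{2\til s}\|P_N\bfu\|_{\ch^0}^2$ gives
$$
\fr{4}{|\be+\ga|} N^{2\til s}|M_N(u,v,w)| \le \hi(\fr{1}{2} + \til C N^{-1}\|\bfu\|_{\ch^{s_0}}^2\mi) N^{2\til s}\|P_N\bfu\|_{\ch^0}^2.
$$
Substituting this into the definition \eqref{ene10} of $E_N^{\til s}$, the $\til C N^{-1}\|\bfu\|_{\ch^{s_0}}^2\|P_N\bfu\|_{\ch^0}^2$ contributions cancel exactly, and we conclude $E_N^{\til s}(\bfu)\ge \fr{1}{2} N^{2\til s}\|P_N\bfu\|_{\ch^0}^2$, which is the desired coercivity.

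There is no serious obstacle here: the argument is essentially a balancing act via Young's inequality. The hypothesis $s>\fr{d+1}{2}$ is used precisely to produce a valid Sobolev exponent $s_0>\fr{d}{2}$ controlling the ``unprojected'' factor $u$ in $L^\infty$, while the restriction $N\in 2^{\N}$ ensures $\Del^{-1}P_N$ is a well-defined bounded operator with norm $\sim N^{-2}$. The only care required in writing out the details is to fix the order in which the constants are selected so that a single global choice of $\til C$, depending only on $d$, $\be$, $\ga$, and $s_0$, works uniformly in $N$ and $\bfu$.
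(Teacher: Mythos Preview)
Your argument is correct and follows essentially the same approach as the paper: bound $|M_N(u,v,w)|\les N^{-1}\|\bfu\|_{\ch^{s_0}}\|P_N\bfu\|_{\ch^0}^2$ via H\"older and the Sobolev embedding $H^{s_0}\hookrightarrow L^\infty$, then apply Young's inequality to convert the linear factor $\|\bfu\|_{\ch^{s_0}}$ into $\til C\|\bfu\|_{\ch^{s_0}}^2$ plus a constant absorbable into the leading term. Your version is more explicit about the constant tracking and the choice of $\til C$, but the substance is identical.
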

\begin{proof}
It suffices to show
\begin{equation}\label{pr_coer1}
\fr{4}{|\be +\ga|}N^{2\til s}|M_{N}(u, v, w)|-\fr{1}{2}N^{2\til s} \|P_N \bfu\|_{\ch^0}^2
\les N^{2\til s-1}\|\bfu\|_{\ch^{s_0}}^2\|P_N\bfu\|_{\ch^{0}}^2.
\end{equation}
Then, the H\"older inequality and the Sobolev embedding $H^{s_0}(\R^d)\hookrightarrow L^\infty(\R^d)$ yields $|M_N(u, v, w)|\les N^{-1}\|\bfu\|_{\ch^{s_0}}\|P_{N} \bfu\|_{\ch^{0}}^2$.
Therefore, Young's inequality $\|\bfu\|_{\ch^{s_0}}\le \fr{1}{2C} + \fr{C}{2}\|\bfu\|_{\ch^{s_0}}^2$ for $C>0$ leads to the desired bound \eqref{pr_coer1}.
\end{proof}

Before showing  Lemma \ref{energy1}, we collect the following lemma.
\begin{lem}\label{s_0norm}
We have
\[
\fr{1}{2}\fr{d}{dt}\|\bfu_K\|_{\ch^{s_0}}^2
\les \|\bfu_K\|_{\ch^s}^3
\]
for any $s>\fr{1}{2}(d+1)$, $T>0$, $t\in [0, T]$, $K\in 2^{\N}\cup\{\infty\}$ and a solution $\bfu_K=(u_K, v_K, w_K)\in C([0, T]; \ch^{s+\fr{3}{2}})$ to \eqref{K-CC}.
\end{lem}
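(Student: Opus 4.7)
The plan is to differentiate $\|\bfu_K\|_{\ch^{s_0}}^2$ in time via \eqref{K-CC Hs-sol}. The skew-adjointness of $i\al\Del$, $i\be\Del$, $i\ga\Del$ on $H^{s_0}$ eliminates the linear parts, so $\fr{1}{2}\fr{d}{dt}\|\bfu_K\|_{\ch^{s_0}}^2$ reduces to a sum of three pairings of the form $\ope{Im}\lan h_1, h_2\ran_{H^{s_0}}$, where $h_1\in\{u_K,v_K,w_K\}$ and $h_2$ is the corresponding nonlinear term. Throughout, $J_{\le K}$ is a bounded Fourier cutoff and may be dropped.

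The key trick is to redistribute a single half-derivative between the two factors using the self-adjointness of $\jb{\na}^{1/2}$:
\[
\lan h_1, h_2\ran_{H^{s_0}} = \lan \jb{\na}^{s_0+\fr{1}{2}} h_1, \jb{\na}^{s_0-\fr{1}{2}} h_2\ran_{L^2}.
\]
An elementary computation shows $s_0 + \fr{1}{2} = \fr{s}{2} + \fr{d+1}{4} \le s$ exactly because $s > \fr{d+1}{2}$, so by Cauchy--Schwarz each pairing is bounded by $\|h_1\|_{H^s}\|h_2\|_{H^{s_0-1/2}}$.

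For the $u$- and $v$-equations, I plan to apply the Sobolev multiplication $H^{s-1}(\R^d)\cd H^s(\R^d) \hookrightarrow H^{s_0-1/2}(\R^d)$; the conditions $s_0-\fr{1}{2}\le s-1$ and $s_0-\fr{1}{2} < 2s-1-\fr{d}{2}$ both reduce to $s > \fr{d+1}{2}$ (and one checks $s_0-\fr{1}{2}\ge 0$ throughout the parameter range), yielding for instance $\|(\na\cd w_K)v_K\|_{H^{s_0-1/2}} \les \|w_K\|_{H^s}\|v_K\|_{H^s}$. For the $w$-equation, I absorb the outer gradient into the shifted regularity via $\|\jb{\na}^{s_0-1/2}\na f\|_{L^2} \le \|f\|_{H^{s_0+1/2}}$, and then use that $H^s$ is an algebra (as $s>\fr{d}{2}$) together with $s_0+\fr{1}{2}\le s$ to obtain $\|u_K\cd\ove{v_K}\|_{H^{s_0+1/2}} \le \|u_K\cd\ove{v_K}\|_{H^s}\les \|u_K\|_{H^s}\|v_K\|_{H^s}$.

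Summing the three estimates yields the required bound by $\|\bfu_K\|_{\ch^s}^3$. No cancellation between the three equations nor any commutator analysis is required; the gap $s-s_0 > \fr{1}{2}$ built into the definition of $s_0$ is precisely what makes the half-derivative redistribution succeed, and this is the only substantive place where the strict inequality $s > \fr{d+1}{2}$ is invoked. The only delicate point to check carefully is the Sobolev product condition when $s$ is close to $\fr{d+1}{2}$, but the strict hypothesis leaves positive slack in every required inequality.
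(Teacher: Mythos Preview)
Your proof is correct and follows essentially the same route as the paper: differentiate, drop the skew-adjoint linear parts, shift a half-derivative across the $H^{s_0}$ pairing to reduce to $\|h_1\|_{H^{s_0+1/2}}\|h_2\|_{H^{s_0-1/2}}$, and close with the product estimate $\|fg\|_{H^{s_0-1/2}}\les\|f\|_{H^{s-1}}\|g\|_{H^s}$ together with $s_0+\tfrac12\le s$. The paper carries this out only for the $u$-equation and asserts the others are analogous; your separate handling of the $w$-equation via $\|\na f\|_{H^{s_0-1/2}}\le\|f\|_{H^{s_0+1/2}}\le\|f\|_{H^s}$ and the algebra property of $H^s$ is a legitimate (and equally simple) variant.
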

\begin{proof}
We only consider the time derivative of $\|u_K\|_{\ch^{s_0}}^2$, but the same argument holds for $\|v_K\|_{\ch^{s_0}}^2$ and $\|w_K\|_{\ch^{s_0}}^2$.
We obtain
\begin{align}
\fr{1}{2}\fr{d}{dt}\|u_K\|_{H^{s_0}}^2
&= -\ope{Im}\int\lan\na\ran^{s_0}((\na\cd w_K)v_K)\cd\ove{\lan\na\ran^{s_0}u_K}\,dx \nt\\
&\les \|(\na\cd w_K)v_K\|_{H^{s_0-\fr{1}{2}}}\|u_K\|_{H^{s_0+\fr{1}{2}}} \nt\\
&\les \|(\na\cd w_K)\|_{H^{s-1}}\|v_K\|_{H^{s}}\|u_K\|_{H^{s}}, \nt
\end{align}
where we used the inequality $\|fg\|_{H^{s_0-\fr{1}{2}}}\les \|f\|_{H^{s-1}}\|g\|_{H^{s}}$.
This completes the proof.
\end{proof}

\subsection{Proof of Lemma \ref{energy1}}
In the following, we may omit the subscript $K$ since it is harmless.
From Lemma \ref{coer1}, \eqref{ene9}, and \eqref{ene10}, we obtain
\begin{align}
&N^{2\til s}\|P_N \bfu(t)\|_{\ch^0}^2
\les  E_N^{\til s}(\bfu(t)) \label{pr_ene1}\\
&=  E_N^{\til s}(\bfu(0)) + \int_0^t\fr{d}{dt'}E_N^{\til s}(\bfu(t'))\,dt'\nt\\
&\les N^{2\til s}(1+\|\bfu(0)\|_{\ch^{s_0}}^2)\|P_N \bfu(0)\|_{\ch^0}^2 + N^{-1}\|P_N \bfu(t)\|_{\ch^{s_0}}^2\Bl|\int_0^t \fr{d}{dt'}\|\bfu\|_{\ch^{s_0}}^2\,dt'\Br|\nt\\
&\quad +(1+\|\bfu(t)\|_{\ch^s}^2)N^{2\til s}\Bl|\int_0^t (\textrm{R.H.S. of \eqref{ene1}--\eqref{ene3}})\,dt'\Br| \nt\\
&\quad + N^{2\til s}\Bl|\int_0^t (\textrm{R.H.S. of \eqref{ene5}--\eqref{ene7}})\,dt'\Br| \nt\\
&\quad + N^{2\til s-1}\|\bfu(t)\|_{\ch^s}^2\Bl|\int_0^t\int_{\R^d}(\ove{\na\cd  P_Nw}) u \cd \ove{ P_N v}\,dxdt'\Br|\nt
\end{align}
for $N\in 2^{\N}$ and $t\in [0, T]$.
From Lemma \ref{s_0norm} and the embedding $F_{\si}^{s}(T)\hookrightarrow L_T^\infty H^{s}$ for $\si\in \R\sm\{0\}$, the second term on the right-hand side of \eqref{pr_ene1} is bounded by
$N^{-1}T\|\bfu\|_{\bff^s(T)}^5$.
Also, from Proposition \ref{trilin1}, the last term on the right-hand side of \eqref{pr_ene1} is bounded by
$T\|\bfu\|_{\bff^s(T)}N^{2\til s}\|P_N \bfu\|_{\bff_N(T)}^2$,
where $\|P_N \bfu\|_{\bff_{N}}$ is defined by
\begin{equation}\label{F_N}
\|P_N \bfu\|_{\bff_{N}(T)}^2:= \|P_N u\|_{F_{N, \al}(T)}^2 + \|P_N v\|_{F_{N, \be}(T)}^2 + \|P_N w\|_{F_{N, \ga}(T)}^2.
\end{equation}
Thus, it suffices to show
\begin{align}
&N^{2\til s}\Bl|\int_0^t(\tm{R.H.S. of \eqref{ene1}--\eqref{ene3}, \eqref{ene5}--\eqref{ene7}})\, dt'\Br| \label{pr_ene3}\\
&\les T^{\theta}(\|\bfu\|_{\bff^s(T)}+ \|\bfu\|_{\bff^s(T)}^2)\Bl(\sum_{N_1\sim N}N_1^{\til s}\|P_{N_1} \bfu\|_{\bff_{N_1}(T)}\Br)^2 \nt\\
&\quad +  T^{\theta}(\|\bfu\|_{\bff^s(T)}+ \|\bfu\|_{\bff^s(T)}^2)\|\bfu\|_{\bff^{\til s}(T)}\cd N^{\til s-\del}\|P_N \bfu\|_{\bff_{N}(T)} \nt
\end{align}
for some $\theta\in (0, 1]$, $\del>0$ and all $t\in [0, T]$ if $\til s\ge s>\fr{d+1}{2}$.
Indeed, if we obtain \eqref{pr_ene3}, then the desired estimate follows by taking the supremum over $t\in [0, T]$ on \eqref{pr_ene1} and summing them up over $N\in 2^{\N}$.
We note that the integral over $[0, t]$ for the second term on the left-hand side of \eqref{ene2} is bounded by the first term on the right-hand side of \eqref{pr_ene3} in the case of $N=1$.

It suffices to show the case of $t=T$ in \eqref{pr_ene3}.
Also, we only consider the estimate of the first term on the right-hand side of \eqref{ene1} and the third term on the right-hand side of \eqref{ene7}.
The first term on the right-hand sides of \eqref{ene2}, \eqref{ene3}, and \eqref{ene5}, and the first and second terms on the right-hand side of \eqref{ene7} are estimated in the same manner as the first term on the right-hand side of \eqref{ene1}.
Also, the second term on the right-hand side of \eqref{ene5} and the first term on the right-hand side of \eqref{ene6} are estimated by the similar argument as the third term on the right-hand side of \eqref{ene7}.

We first consider the estimate of the third term on the right-hand side of \eqref{ene7}.
From Lemma \ref{qu1} and the H\"older and Bernstein inequality, we have 
\begin{align}
&N^{2\til s}\Bl|\int_0^T\int_{\R^d}(u\cd P_{N}((\ove{\na\cd w})u)) \ove{\na\cd \Del^{-1} P_{N}w}\,dxdt\Br| \label{pr_ene5} \\
&\le T\|u\|_{L_T^\infty L_x^\infty}\cd N^{\til s -1}\|P_{N}((\ove{\na\cd w})u)\|_{L_T^\infty L_x^2}\cd N^{\til s +1}\|\na\cd \Del^{-1} P_{N}w\|_{L_T^\infty L_x^2} \nt \\
&\les T\|u\|_{F_{\al}^{s}(T)}\Bl(\sum_{N_2\sim N}N_2^{\til s}\|P_{N_2} w\|_{F_{N_2, \ga}(T)}\Br)\|u\|_{F_{\al}^{s}(T)}N^{\til s}\|P_{N} w\|_{F_{N, \ga}(T)} \nt \\
&\quad + TN^{-1/2}\|u\|_{F_{\al}^{s}(T)}\|\bfu\|_{\bff^{\til s}(T)}\|\bfu\|_{\bff^{s}(T)}N^{\til s}\|P_{N} w\|_{F_{N, \ga}(T)} \nt
\end{align}
if $\til s\ge s\ge \fr{1}{2}(d+1)$, where we used $\|u\|_{L_T^\infty L_x^\infty}\les \sum_{N\in 2^{\N}}N^{d/2}\|P_N u\|_{L_T^\infty L_x^2}\les \|u\|_{F_{\al}^{s}(T)}$.
The right-hand side of \eqref{pr_ene5} is bounded by the right-hand side of \eqref{pr_ene3} with $\theta=1$, $\del=\fr{1}{2}$.

Next, we consider the estimate of the first term on the right-hand side of \eqref{ene1}.
We can decompose as
\begin{align}
&\int   ([P_N, v]\na\cd  w) \cd \ove{ P_N u}\,dx  = \hs{-4mm}\sum_{N_1\sim N_2\gg N}\int P_N((\na\cd  P_{N_1}w) P_{N_2}v) \cd \ove{ P_N u}\,dx   \label{pr_ene6}\\
&\hs{15mm}+\Bl(\sum_{N_1\les N_2\sim N} +\sum_{N_2 \ll N_1\sim N}\Br)\int   ([P_N, P_{N_2}v]\na\cd P_{N_1}w) \cd \ove{ P_N u}\,dx. \nt
\end{align}
From  Proposition \ref{trilin1}, we have
\begin{align}
& N^{2\til s}\sum_{N_1\sim N_2\gg N}\Bl|\int_0^T\int_{\R^d} P_N((\na\cd  P_{N_1}w) P_{N_2}v) \cd \ove{ P_N u}\,dxdt\Br|  \label{pr_ene6.5}\\
&\les T^{\theta}\hs{-2mm}\sum_{N_1\sim N_2\gg N}\hs{-2mm}N_1N^{2\til s+ \fr{1}{2}(d-1)}\|P_{N}\bfu\|_{\bff_N(T)}\|P_{N_2}\bfu\|_{\bff_{N_2}(T)}\|P_{N_1}\bfu\|_{\bff_{N_1}(T)} \nt\\
&\les T^{\theta}\hs{-2mm}\sum_{N_1\sim N_2\gg N}\hs{-2mm}N_1^{s}N_2^{\til s}N^{-(s-\fr{1}{2}(d+1)) + \til s}\|P_{N}\bfu\|_{\bff_N(T)}\|P_{N_2}\bfu\|_{\bff_{N_2}(T)}\|P_{N_1}\bfu\|_{\bff_{N_1}(T)} \nt
\end{align}
for some $\theta=\theta(d)>0$.
Hence, this term is bounded by the second term on the right-hand side of \eqref{pr_ene3} with $\del=s-\fr{d+1}{2}$  if $s>\fr{1}{2}(d+1)$.
We consider the second term on the right-hand side of \eqref{pr_ene6}.
We put $u=(u_1, \ldots, u_d)$, $v=(v_1, \ldots, v_d)$, $w=(w_1, \ldots, w_d)$.
From the mean value theorem, we have
\begin{align}
&\int_0^T\int_{\R^d}[P_N, P_{N_2}v](\na\cd P_{N_1}w) \cd \ove{P_N u}\,dxdt \label{pr_ene7}\\
&=-\sum_{j=1}^d\int_0^T\int_{\R^d} \int_0^1\int_{\R^d}\cf^{-1}[\psi_N](y)y\cd (\na P_{N_2}v_j)(t, x-\rho y) \nt\\
&\hs{50mm}\times (\na\cd  P_{N_1}w)(t, x-y)\ove{P_N u_j}\,dyd\rho dxdt \nt\\
&=-\sum_{j, k=1}^d \int_0^1\int_{\R^d}\cf^{-1}[\psi_N](y)y_k \nt\\
&\times \Bl(\int_0^T\hs{-1mm}\int_{\R^d} (\pa_{x_k} P_{N_2}v_j)(t, x-\rho y)(\na\cd P_{N_1}w)(t, x-y)\ove{P_N u_j}\,dxdt\Br)dyd\rho. \nt
\end{align}
Here, Proposition \ref{trilin1} and the invariance of the $F_{N, \si}(T)$-norm for the translation with respect to the spatial variables yield
\begin{align}
&\sup_{\rho\in (0, 1),\ y\in \R^d}\Bl|\int_0^T\int_{\R^d} (\pa_{x_k} P_{N_2}v_j)(t, x-\rho y)(\na\cd  P_{N_1}w)(t, x-y)\ove{P_N u_j}\,dxdt\Br| \label{pr_ene8}\\
&\les T^{\theta}\min\{N, N_1, N_2\}^{\fr{d-1}{2}} N_1 N_2\|P_{N}u\|_{F_{N, \al}(T)}\|P_{N_2}v\|_{F_{N_2, \be}(T)}\|P_{N_1}w\|_{F_{N_1, \ga}(T)} \nt
\end{align}
for $1\le j, k\le d$ and some $\theta=\theta(d)>0$.
Since $\int_{\R^d}|\cf^{-1}[\psi_N](y)||y_k| \,dy\les N^{-1}$ for $1\le k\le d$, we obtain
\begin{align}
&N^{2\til s} \hs{-4mm} \sum_{N_1\les N_2\sim N}\Bl|\int_0^T\int_{\R^d}  [P_N, P_{N_2}v](\na\cd  P_{N_1}w) \cd \ove{ P_N u}\,dxdt\Br| \nt\\
&\les T^{\theta}N^{2\til s-1} \hs{-4mm} \sum_{N_1\les N_2\sim N} \hs{-4mm} N_1^{(d+1)/2}N_2\|P_{N}\bfu\|_{\bff_N(T)}\|P_{N_2}\bfu\|_{\bff_{N_2}(T)}\|P_{N_1}\bfu\|_{\bff_{N_1}(T)} \nt\\
&\les T^{\theta}\|\bfu\|_{\bff^s(T)}N^{\til s}\|P_{N}\bfu\|_{\bff_N(T)}\ \hs{-2mm} \sum_{N_2\sim N}N_2^{\til s}\|P_{N_2}\bfu\|_{\bff_{N_2}(T)} \nt
\end{align}
if $s>\fr{d+1}{2}$.
The estimate of the part of $N_2\ll N_1\sim N$ is similar to the part of $N_1\ll N_2\sim N$.
This yields the bound for the first term on the right-hand side of \eqref{ene1}. 
We note that we decompose the right-hand side of \eqref{ene3} as
\begin{align}
&\int  (P_N(u\cd\ove{v}) - P_{N}u\cd \ove{v}- u\cd \ove{P_{N}v})\ove{\na \cd P_N w}\,dx  \label{pr_ene10}\\
 &= \sum_{N_1\sim N_2\gg N}\int P_N(P_{N_1}u\cd\ove{P_{N_2}v})\ove{\na\cd P_N w}\,dx  \nt\\
&\quad+\sum_{N_1\les N_2\sim N} \int ([P_N, P_{N_1}u ]\cd \ove{P_{N_2}v} - P_NP_{N_1}u\cd \ove{P_{N_2}v}) \ove{\na\cd P_N w}\,dx \nt\\
&\quad+\sum_{N_2 \ll N_1\sim N}\int ([P_N, \ove{P_{N_2}v}]\cd P_{N_1}u - P_{N_1}u\cd \ove{P_{N}P_{N_2}v}) \ove{\na\cd P_N w}\,dx, \nt
\end{align}
where $[P_N, f]\cd g:=P_N(f\cd g)-f\cd P_N g$.
Then, we can obtain the desired estimate by the same argument as the right-hand side of \eqref{ene1}.
Thus, we obtain \eqref{pr_ene3}, which concludes the proof of Lemma \ref{energy1}.

\section{Estimate for the difference}
\subsection{Correction terms in the case of the difference}
Throughout this section, we assume $K_1\le K_2$.
For $j=1, 2$, let $(u_j, v_j, w_j)$ be a solution to \eqref{K-CC} with $K=K_j$.
Also, let $(\til u, \til v, \til w):= (u_1 - u_2, v_1 - v_2, w_1 - w_2)$.
Then, we obtain the equation of the difference \eqref{K-CC_diff}.
Similarly to \eqref{ene1}, we have
\begin{align}
&\fr{1}{2}\fr{d}{dt}\|P_N \til u\|_{L^2}^2 
-\ope{Im} \int   (\na\cd  P_N \til w) \til v \cd \ove{ P_N \til u}\,dx + \ope{Im} \int   (\na\cd  P_N \til w) v_1 \cd \ove{ P_N \til u}\,dx \label{diff1}\\
&= - \ope{Im} \int   (\na\cd  P_N w_1) \til v \cd \ove{ P_N \til u}\,dx + \ope{Im} \int   (\na\cd  P_N w_1) v_1 \cd \ove{ P_N J_{(K_1, K_2]}\til u}\,dx \nt\\
& +\ope{Im} \int   ([P_N, \til v]\na\cd  \til w) \cd \ove{ P_N \til u}\,dx - \ope{Im} \int   ([P_N, v_1]\na\cd  \til w) \cd \ove{ P_N \til u}\,dx \nt\\
& - \ope{Im} \int   ([P_N, \til v]\na\cd w_1) \cd \ove{ P_N \til u}\,dx + \ope{Im} \int   ([P_N, v_1]\na\cd  w_1) \cd \ove{ P_N J_{(K_1, K_2]}\til u}\,dx, \nt
\end{align}
\begin{align}
&\fr{1}{2}\fr{d}{dt}\|P_N \til v\|_{L^2}^2 
- \ope{Im} \int   (\ove{\na\cd  P_N \til w}) \til u \cd \ove{ P_N \til v}\,dx + \ope{Im} \int   (\ove{\na\cd  P_N \til w}) u_1 \cd \ove{ P_N \til v}\,dx \label{diff2}\\
&= - \ope{Im} \int   (\ove{\na\cd  P_N w_1}) \til u \cd \ove{ P_N \til v}\,dx + \ope{Im} \int   (\ove{\na\cd  P_N w_1}) u_1 \cd \ove{ P_N J_{(K_1, K_2]}\til v}\,dx \nt\\
& +\ope{Im} \int   ([P_N, \til u]\ove{\na\cd  \til w}) \cd \ove{ P_N \til v}\,dx - \ope{Im} \int   ([P_N, u_1]\ove{\na\cd  \til w}) \cd \ove{ P_N \til v}\,dx \nt\\
& - \ope{Im} \int   ([P_N, \til u]\ove{\na\cd w_1}) \cd \ove{ P_N \til v}\,dx + \ope{Im} \int   ([P_N, u_1]\ove{\na\cd  w_1}) \cd \ove{ P_N J_{(K_1, K_2]}\til v}\,dx, \nt
\end{align}
\begin{align}
&\fr{1}{2}\fr{d}{dt}\|P_N \til w\|_{L^2}^2 
- \ope{Im} \int  (P_N  \til u \cd \ove{\til v})\ove{\na \cd P_N  \til w}\,dx 
- \ope{Im} \int  (\til u \cd \ove{P_N \til v})\ove{\na \cd P_N  \til w}\,dx \label{diff3}\\
&\quad + \ope{Im} \int  (P_N  \til u \cd \ove{v_1})\ove{\na \cd P_N  \til w}\,dx
+ \ope{Im} \int  (u_1 \cd \ove{P_N \til v})\ove{\na \cd P_N  \til w}\,dx  \nt\\
&= - \ope{Im} \int  (\til u \cd \ove{P_N v_1})\ove{\na \cd P_N  \til w}\,dx
- \ope{Im} \int  (P_N  u_1 \cd \ove{\til v})\ove{\na \cd P_N  \til w}\,dx \nt\\
& + \ope{Im} \int  (P_N u_1 \cd \ove{v_1})\ove{\na \cd P_N J_{(K_1, K_2]} \til w}\,dx
  + \ope{Im} \int  (u_1 \cd \ove{P_N v_1})\ove{\na \cd P_N J_{(K_1, K_2]} \til w}\,dx \nt\\
& + \ope{Im} \int  (\com(P_N, \til u, \ove{\til v}))\ove{\na \cd P_N \til w}\,dx 
 - \ope{Im} \int  (\com(P_N, \til u, \ove{v_1}))\ove{\na \cd P_N \til w}\,dx \nt\\
& + \ope{Im} \int  (\com(P_N, u_1, \ove{\til v}))\ove{\na \cd P_N \til w}\,dx 
+  \ope{Im} \int  (\com(P_N, u_1, \ove{v_1}))\ove{\na \cd P_N J_{(K_1, K_2]}\til w}\,dx, \nt
\end{align}
where $\com(P_N, f, g)$ is defined in \eqref{com}.
The second and third terms on the left-hand sides of \eqref{diff1} and the second and fourth terms on the left-hand side of \eqref{diff3} are canceled each other.
To cancel the second and third terms on the left-hand side of \eqref{diff2} and the third and fifth term on the left-hand side of \eqref{diff3}, we add the correction terms $M_{N}( \til u,   \til v,   \til w)$ and $M_{N}( u_1,   \til v,   \til w)$, where $M_N$ is defined by \eqref{ene4}.

Similar calculations as \eqref{ene5}--\eqref{ene7} yield
\begin{align}
M_N(\pt \til u, \til v, \til w)  
&= -\al \ope{Im}\int(\Del \til u\cd \ove{P_{N}  \til v})\ove{\na\cd \Del^{-1} P_{N} \til w}\,dx \label{diff4}\\
&\quad + \ope{Im}\int(J_{\le K_2}((\na\cd \til w)\til v)\cd \ove{P_{N}  \til v})\ove{\na\cd \Del^{-1} P_{N} \til w}\,dx \nt\\
&\quad - \ope{Im}\int(J_{\le K_2}((\na\cd \til w) v_1)\cd \ove{P_{N}  \til v})\ove{\na\cd \Del^{-1} P_{N} \til w}\,dx \nt\\
&\quad - \ope{Im}\int(J_{\le K_2}((\na\cd w_1)\til v)\cd \ove{P_{N}  \til v})\ove{\na\cd \Del^{-1} P_{N} \til w}\,dx \nt\\
&\quad + \ope{Im}\int(J_{(K_1, K_2]}((\na\cd w_1)v_1)\cd \ove{P_{N}  \til v})\ove{\na\cd \Del^{-1} P_{N} \til w}\,dx, \nt
\end{align}
\begin{align}
&M_N(\til u, \pt \til v, \til w)  - \be \ope{Im}\int(\til u\cd \ove{P_{N} \til v})\ove{\na\cd P_{N} \til w}\,dx \label{diff5}\\
&= \be\ope{Im}\int(\Del \til u\cd \ove{P_{N} \til v})\ove{\na\cd \Del^{-1} P_{N} \til w}\,dx \nt\\
&\quad\quad + 2\be\sum_{j=1}^d\ope{Im}\int(\pa_{x_j} \til u\cd \ove{P_{N} \til v})\ove{\na\cd \Del^{-1} \pa_{x_j}P_{N} \til w}\,dx \nt\\
&\quad - \ope{Im}\int( \til u\cd \ove{ P_{N}J_{\le K_2}((\ove{\na\cd \til w})\til u)})\ove{\na\cd \Del^{-1} P_{N}\til w}\,dx \nt\\
&\quad + \ope{Im}\int( \til u\cd \ove{ P_{N}J_{\le K_2}((\ove{\na\cd \til w})u_1)})\ove{\na\cd \Del^{-1} P_{N}\til w}\,dx \nt\\
&\quad + \ope{Im}\int( \til u\cd \ove{ P_{N}J_{\le K_2}((\ove{\na\cd w_1})\til u)})\ove{\na\cd \Del^{-1} P_{N}\til w}\,dx \nt\\
&\quad - \ope{Im}\int( \til u\cd \ove{ P_{N}J_{(K_1, K_2]}((\ove{\na\cd w_1})u_1)})\ove{\na\cd \Del^{-1} P_{N}\til w}\,dx, \nt
\end{align}
\begin{align}
&M_N(\til u, \til v, \pt\til w)  - \ga\ope{Im}\int(\til u\cd \ove{P_{N} \til v})\ove{\na\cd P_{N} \til w}\,dx \label{diff6}\\
&= \ope{Im}\int( \til u\cd \ove{P_{N} \til v})\ove{P_{N}J_{\le K_2}(\til u\cd \ove{ \til v})}\,dx  - \ope{Im}\int( \til u\cd \ove{P_{N} \til v})\ove{P_{N}J_{\le K_2}(\til u\cd \ove{v_1})}\,dx \nt\\
& - \ope{Im}\int( \til u\cd \ove{P_{N} \til v})\ove{P_{N}J_{\le K_2}(u_1\cd \ove{ \til v})}\,dx + \ope{Im}\int( \til u\cd \ove{P_{N} \til v})\ove{P_{N}J_{(K_1, K_2]}(u_1\cd \ove{v_1})}\,dx. \nt
\end{align}
For $M_N(u_1, \til v, \til w)$, we can similarly calculate as follows:
\begin{align}
M_N(\pt u_1, \til v, \til w)  \label{diff7}
&= -\al \ope{Im}\int(\Del u_1\cd \ove{P_{N}  \til v})\ove{\na\cd \Del^{-1}P_{N} \til w}\,dx \\
&\quad - \ope{Im}\int(J_{\le K_1}((\na\cd w_1)v_1)\cd \ove{P_{N}  \til v})\ove{\na\cd \Del^{-1}P_{N} \til w}\,dx, \nt
\end{align}
\begin{align}
&M_N(u_1, \pt\til v, \til w)  - \be\ope{Im}\int(u_1\cd \ove{P_{N} \til v})\ove{\na\cd P_{N} \til w}\,dx \label{diff8}\\
&=  \be\ope{Im}\int(\Del u_1\cd \ove{P_{N} \til v})\ove{\na\cd \Del^{-1} P_{N} \til w}\,dx \nt\\
&\quad\quad + 2\be\sum_{j=1}^d\ope{Im}\int(\pa_{x_j} u_1\cd \ove{P_{N} \til v})\ove{\na\cd \Del^{-1} \pa_{x_j}P_{N} \til w}\,dx \nt\\
&\quad - \ope{Im}\int( u_1\cd \ove{ P_{N}J_{\le K_2}((\ove{\na\cd \til w})\til u)})\ove{\na\cd \Del^{-1} P_{N}\til w}\,dx \nt\\
&\quad + \ope{Im}\int( u_1\cd \ove{ P_{N}J_{\le K_2}((\ove{\na\cd \til w})u_1)})\ove{\na\cd \Del^{-1} P_{N}\til w}\,dx \nt\\
&\quad + \ope{Im}\int( u_1\cd \ove{ P_{N}J_{\le K_2}((\ove{\na\cd w_1})\til u)})\ove{\na\cd \Del^{-1} P_{N}\til w}\,dx \nt\\
&\quad - \ope{Im}\int( u_1\cd \ove{ P_{N}J_{(K_1, K_2]}((\ove{\na\cd w_1})u_1)})\ove{\na\cd \Del^{-1} P_{N}\til w}\,dx, \nt
\end{align}
\begin{align}
&M_N(u_1, \til v, \pt \til w)  - \ga\ope{Im}\int(u_1\cd \ove{P_{N} \til v})\ove{\na\cd P_{N} \til w}\,dx \label{diff9}\\
&=  \ope{Im}\int(u_1\cd \ove{P_{N} \til v})\ove{P_{N}J_{\le K_2}(\til u\cd \ove{ \til v})}\,dx  - \ope{Im}\int(u_1\cd \ove{P_{N} \til v})\ove{P_{N}J_{\le K_2}(\til u\cd \ove{v_1})}\,dx \nt\\
& - \ope{Im}\int(u_1\cd \ove{P_{N} \til v})\ove{P_{N}J_{\le K_2}(u_1\cd \ove{ \til v})}\,dx  + \ope{Im}\int(u_1\cd \ove{P_{N} \til v})\ove{P_{N}J_{(K_1, K_2]}(u_1\cd \ove{v_1})}\,dx. \nt
\end{align}
Therefore, we obtain
\begin{align}
&\fr{1}{2}\fr{d}{dt}\bl(\|(P_N \til u, P_N \til v, P_N \til w)\|_{\ch^0}^2  - \fr{4}{\be + \ga}(M_{N}(\til u,  \til v,  \til w) - M_{N}( u_1,  \til v,  \til w))\br) \label{diff10} \\
&= (\textrm{R.H.S. of \eqref{diff1}--\eqref{diff3}})+\fr{4}{\be+\ga}\times (\textrm{R.H.S. of \eqref{diff4}--\eqref{diff9}}). \nt
\end{align}

We construct the modified energies in the case of the diiference as follows:
\begin{align}\label{diff11}
\til E_N^{r}(\til \bfu) 
&:= N^{2r} (1+ \til CN^{-1}(\|\bfu_1\|_{\ch^{s_0}}^2 + \|\bfu_2\|_{\ch^{s_0}}^2)\|P_N\til \bfu\|_{\ch^{0}}^2 \\
&\quad - \fr{4}{\be + \ga}N^{2r}(M_{N}(\til u, \til v, \til w)-M_{N}(u_1, \til v, \til w)), \nt
\end{align}
where $s> \fr{1}{2}(d+1)$, $s_0=\fr{d}{2}+ \fr{1}{2}(s-\fr{1}{2}(d+1))$, $\til C=\til C(s_0, d)>0$, and $r=0, s$.
From this definition, we obtain the coercivity of the modified energy.
We omit the proof because the proof is the same as Lemma \ref{coer1}.
\begin{lem}\label{coer2}
Let $d\in \N$, $ s> \fr{1}{2}(d+1)$, and $r=0, s$.
Then, there exist $\til C>0$ such that $\til E_N^{r}(\til \bfu) \gtr N^{2r} \|P_N\bfu\|_{\ch^{0}}^2$ for $N\in 2^{\N}$ and $(u, v, w)\in \ch^{s}$.
\end{lem}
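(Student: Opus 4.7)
The plan is to mirror exactly the proof of Lemma \ref{coer1}. As there, it suffices to establish the estimate
\[
\fr{4}{|\be+\ga|}N^{2r}|M_N(\til u,\til v,\til w)-M_N(u_1,\til v,\til w)|-\fr{1}{2}N^{2r}\|P_N\til\bfu\|_{\ch^0}^2\les N^{2r-1}(\|\bfu_1\|_{\ch^{s_0}}^2+\|\bfu_2\|_{\ch^{s_0}}^2)\|P_N\til\bfu\|_{\ch^0}^2,
\]
since then the positive correction factor $(1+\til C N^{-1}(\|\bfu_1\|_{\ch^{s_0}}^2+\|\bfu_2\|_{\ch^{s_0}}^2))N^{2r}\|P_N\til\bfu\|_{\ch^0}^2$ in \eqref{diff11} absorbs both the right-hand side above and a further $\fr{1}{2}N^{2r}\|P_N\til\bfu\|_{\ch^0}^2$, provided $\til C$ is chosen large enough in terms of the implicit constant and $|\be+\ga|^{-1}$.

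First, I would control each correction term pointwise. By H\"older's inequality, the Bernstein-type bound $\|\na\cd\Del^{-1}P_N h\|_{L^2}\les N^{-1}\|P_N h\|_{L^2}$ (valid for $N\in 2^{\N}$), and the Sobolev embedding $H^{s_0}(\R^d)\hookrightarrow L^{\infty}(\R^d)$ (which holds since $s_0>\fr{d}{2}$), together with $2ab\le a^2+b^2$ applied to $\|P_N\til v\|_{L^2}\|P_N\til w\|_{L^2}$, I would obtain
\[
|M_N(\til u,\til v,\til w)|\les N^{-1}\|\til u\|_{L^\infty}\|P_N\til\bfu\|_{\ch^0}^2\les N^{-1}(\|\bfu_1\|_{\ch^{s_0}}+\|\bfu_2\|_{\ch^{s_0}})\|P_N\til\bfu\|_{\ch^0}^2,
\]
where the last step uses $\til\bfu=\bfu_1-\bfu_2$. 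The term $M_N(u_1,\til v,\til w)$ is handled identically, yielding $|M_N(u_1,\til v,\til w)|\les N^{-1}\|\bfu_1\|_{\ch^{s_0}}\|P_N\til\bfu\|_{\ch^0}^2$. Summing gives the same type of bound with $\|\bfu_1\|_{\ch^{s_0}}+\|\bfu_2\|_{\ch^{s_0}}$.

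Next, I would use Young's inequality in the form $(\|\bfu_1\|_{\ch^{s_0}}+\|\bfu_2\|_{\ch^{s_0}})\le \fr{1}{2C}+\fr{C}{2}(\|\bfu_1\|_{\ch^{s_0}}+\|\bfu_2\|_{\ch^{s_0}})^2\les \fr{1}{2C}+C(\|\bfu_1\|_{\ch^{s_0}}^2+\|\bfu_2\|_{\ch^{s_0}}^2)$ with $C$ chosen (depending on $s_0$, $d$, and $|\be+\ga|^{-1}$) so that the resulting $\fr{1}{2C}$-factor produces exactly the $\fr{1}{2}N^{2r}\|P_N\til\bfu\|_{\ch^0}^2$ on the left above, and the remaining $C$-factor fixes $\til C$. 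This yields the displayed inequality, which combined with the definition \eqref{diff11} of $\til E_N^r(\til\bfu)$ gives $\til E_N^r(\til\bfu)\ge \fr{1}{2}N^{2r}\|P_N\til\bfu\|_{\ch^0}^2$.

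Since the argument is a verbatim adaptation of Lemma \ref{coer1}, there is no genuine obstacle; the only step that requires any thought is the bookkeeping of which solution norms enter the $L^\infty$ bound for the first argument of $M_N$, and ensuring that the correction factor in \eqref{diff11} is large enough to absorb both $|M_N(\til u,\til v,\til w)|$ and $|M_N(u_1,\til v,\til w)|$ simultaneously, which is why the coefficient in \eqref{diff11} is the \emph{symmetric} $\|\bfu_1\|_{\ch^{s_0}}^2+\|\bfu_2\|_{\ch^{s_0}}^2$ rather than $\|\bfu_1\|_{\ch^{s_0}}^2$ alone.
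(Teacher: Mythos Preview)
Your proposal is correct and follows exactly the approach the paper intends: the paper omits the proof of Lemma~\ref{coer2} entirely, stating that it is the same as that of Lemma~\ref{coer1}, and your argument is precisely the natural adaptation of that proof to the difference setting. Your observation about why the correction factor in \eqref{diff11} involves the symmetric quantity $\|\bfu_1\|_{\ch^{s_0}}^2+\|\bfu_2\|_{\ch^{s_0}}^2$ is also on point.
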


\subsection{Proof of Lemma \ref{dene2}}
From Lemma \ref{coer2}, \eqref{diff10}, and \eqref{diff10}, it holds
\begin{align}
&N^{2s}\|P_N \til \bfu(t)\|_{\ch^0}^2 \label{pr_dene2_1}\\
&\les N^{2s}(1+\|\bfu_1(0)\|_{\ch^{s_0}}^2 + \|\bfu_2(0)\|_{\ch^{s_0}}^2)\|P_N \til \bfu(0)\|_{\ch^0}^2 \nt\\
&\quad+ N^{-1}\|P_N \bfu(t)\|_{\ch^{s_0}}^2\Bl|\int_0^t \fr{d}{dt'}(\|\bfu_1\|_{\ch^{s_0}}^2 + \|\bfu_2\|_{\ch^{s_0}}^2)\,dt'\Br|\nt\\
&\quad +(1+\|\bfu_1(t)\|_{\ch^{s_0}}^2 + \|\bfu_2(t)\|_{\ch^{s_0}}^2)N^{2s}\Bl|\int_0^t (\textrm{R.H.S. of \eqref{diff1}--\eqref{diff3}})\,dt'\Br| \nt\\
&\quad + N^{2s}\Bl|\int_0^t (\textrm{R.H.S. of \eqref{diff4}--\eqref{diff9}})\,dt'\Br| \nt\\
&\quad+ N^{2s-1}(\|\bfu_1(t)\|_{\ch^{s_0}}^2 + \|\bfu_2(t)\|_{\ch^{s_0}}^2) \nt\\
&\quad\quad \times \Bl|\int_0^t\int_{\R^d}\Bl((\ove{\na\cd  P_N \til w}) \til u \cd \ove{ P_N \til v}- (\ove{\na\cd  P_N \til w}) u_1 \cd \ove{ P_N \til v}\Br)\,dxdt'\Br|\nt
\end{align}
for $N\in 2^{\N}$ and $t\in [0, T]$.
From Lemma \ref{s_0norm}, the second term on the right-hand side of \eqref{pr_dene2_1} is bounded by $N^{-1}T(\|\bfu_1\|_{\bff^s(T)}^3 + \|\bfu_2\|_{\bff^s(T)}^3)\|P_N \til \bfu\|_{\bff_N(T)}^2$, where $\|P_N\til \bfu\|_{\bff_N(T)}$ is defined by \eqref{F_N}.
Also, Proposition \ref{trilin1} yields that the last term on the right-hand side of \eqref{pr_dene2_1} is evaluated by $T(\|\bfu_1\|_{\bff^s(T)} + \|\bfu_2\|_{\bff^s(T)})N^{2s}\|P_N \bfu\|_{\bff_N(T)}^2$.

In the following, to simplify the notation, for a normed space $X$, we denote 
\begin{equation*}
\|(F, G)\|_{[X]^2}:=\|F\|_{X}+\|G\|_{X}.
\end{equation*}
For Lemma \ref{dene2}, it suffices to show
\begin{align}
&N^{2s}\Bl|\int_0^t(\tm{R.H.S. of }\eqref{diff1}\tm{--}\eqref{diff9})\, dt'\Br| \label{pr_dene2_3}\\
&\les T^{\theta}(\|(\bfu_1, \bfu_2)\|_{[\bff^s(T)]^2} + \|(\bfu_1, \bfu_2)\|_{[\bff^s(T)]^2}^2)\Bl(\sum_{N_1\sim N}N_1^{s}\|P_{N_1} \mb{\til u}\|_{\bff_{N_1}(T)}\Br)^2 \nt\\
&  + T^{\theta}\|\mb{\til u}\|_{\bff^s(T)}(1 +\|(\bfu_1, \bfu_2)\|_{[\bff^s(T)]^2}) \nt\\
&\hs{15mm}\times \sum_{N_1\sim N}N_1^{s}\|P_{N_1} \mb{\til u}\|_{\bff_{N_1}(T)}\sum_{N_2\sim N}N_2^{s}\|(P_{N_2}\bfu_1, P_{N_2}\bfu_2)\|_{[\bff_{N_2}(T)]^2} \nt\\
& +  T^{\theta}(\|(\bfu_1, \bfu_2)\|_{[\bff^s(T)]^2} +\|(\bfu_1, \bfu_2)\|_{[\bff^s(T)]^2}^2)\|\mb{\til u}\|_{\bff^{s}(T)}\cd  N^{s-\del}\|P_N \mb{\til u}\|_{\bff_{N}(T)} \nt\\
& + T^{\theta}\|\mb{\til u}\|_{\bff^0(T)}\cd  N^{3s}\|P_N \bfu_1\|_{\bff_{N}(T)}\|P_N \mb{\til u}\|_{\bff_{N}(T)} \nt
\end{align}
for some $\theta\in (0, 1]$, $\del>0$ and all $t\in [0, T]$ if $s>\fr{d+1}{2}$.
Here, we note that the condition $K_1=K_2=\infty$ is assumed in this lemma.
Thus, the second and last terms on the right-hand sides of \eqref{diff1} and \eqref{diff2}, the third, fourth, and last terms on the right-hand side of \eqref{diff3} and the last term on the right-hand sides of \eqref{diff4}--\eqref{diff6}, \eqref{diff8}, and \eqref{diff9} vanish.
In this proof, we only consider the estimate of the first term on the right-hand side of \eqref{diff1}.
The first term on the right-hand sides of \eqref{diff2} and the first and second terms on the right-hand side of \eqref{diff3} are estimated by the same argument.
Also, we can obtain the estimates of other terms by the same argument as the proof of Lemma \ref{energy1} in Section 6.
From Proposition \ref{trilin1}, we have
\begin{align}
&N^{2s}\Bl|\int_0^T\int_{\R^d}(\na \cd P_N w_1)P_{N_2}\til v \cd \ove{ P_N \til u}\,dxdt\Br| \nt\\
&\les  T^{\theta}N_2^{(d-1)/2}N^{2s+1}\|P_{N}w_1\|_{F_{N, \ga}(T)}\|P_{N_2}\til v\|_{F_{N, \be}(T)}\|P_{N}\til u\|_{F_{N, \al}(T)} \nt\\
&\les  T^{\theta}N_2^{-s+\fr{d+1}{2}}\|P_{N_2}\til v\|_{F_{N, \be}(T)} N^{3s}\|P_{N}w_1\|_{F_{N, \ga}(T)}\|P_{N}\til u\|_{F_{N, \al}(T)} \nt
\end{align}
for $N_2\les N$.
Since $ \sum_{N_2\les N}N_2^{-s+\fr{d+1}{2}}\|P_{N_2}\til v\|_{F_{N, \be}(T)}\les \|\til v\|_{F_{\be}^0(T)}$, the first term on the right-hand side of \eqref{diff1} is bounded by the last term on the right-hand side of \eqref{pr_dene2_3}.
This concludes the proof of Lemma \ref{dene2}.

\subsection{Proof of Lemma \ref{dene1}}
Lemma \ref{coer2}, \eqref{diff10}, and \eqref{diff11} yield \eqref{pr_dene2_1} with $s=0$.
By Lemma \ref{s_0norm}, the second term on the right-hand side of \eqref{pr_dene2_1} is bounded by $N^{-1}T(\|\bfu_1\|_{\bff^s(T)}^3 + \|\bfu_2\|_{\bff^s(T)}^3)\|P_N \til \bfu\|_{\bff_N(T)}^2$.
Also, from Proposition \ref{trilin1}, the last term on the right-hand side of \eqref{pr_dene2_1} is evaluated by $T(\|\bfu_1\|_{\bff^s(T)} + \|\bfu_2\|_{\bff^s(T)})\|P_N \bfu\|_{\bff_N(T)}^2$.
Thus, it suffices to show
\begin{align}
&\Bl|\int_0^T(\tm{R.H.S. of }\eqref{diff1}\tm{--}\eqref{diff9})\, dt\Br| \label{pr_dene1_1}\\
&\les  T^{\theta}(\|(\bfu_1, \bfu_2)\|_{[\bff^s(T)]^2} +\|(\bfu_1, \bfu_2)\|_{[\bff^s(T)]^2}^2) \Bl(\sum_{N_1\sim N}\|P_{N_1} \mb{\til u}\|_{\bff_{N_1}(T)}\Br)^2 \nt\\
&  + T^{\theta}\|\mb{\til u}\|_{\bff^0(T)} (1+\|(\bfu_1, \bfu_2)\|_{[\bff^s(T)]^2}) \nt\\
&\hs{15mm}\times \sum_{N_1\sim N}\|P_{N_1} \mb{\til u}\|_{\bff_{N_1}(T)}\sum_{N_2\sim N}N_2^{s} \|(P_{N_2}\bfu_1, P_{N_2}\bfu_2)\|_{[\bff_{N_2}(T)]^2} \nt\\
& + T^{\theta} (\|(\bfu_1, \bfu_2)\|_{[\bff^s(T)]^2} +\|(\bfu_1, \bfu_2)\|_{[\bff^s(T)]^2}^2)\|\mb{\til u}\|_{\bff^{0}(T)}\cd  N^{-\del}\|P_N \mb{\til u}\|_{\bff_{N}(T)} \nt\\
&+  T^{\theta}K_1^{-(s-\fr{1}{2}(d+1))}\|\bfu_1\|_{\bff^s(T)}(1 +\|(\bfu_1, \bfu_2)\|_{[\bff^s(T)]^2}) \nt\\
&\hs{15mm}\times \sum_{N_1\sim N}\|P_{N_1} \mb{\til u}\|_{\bff_{N_1}(T)}\sum_{N_2\sim N}N_2^{s}\|P_{N_2} \bfu_1\|_{\bff_{N_2}(T)}  \nt\\
&+  T^{\theta}K_1^{-(s-\fr{1}{2}(d+1))}\|\bfu_1\|_{\bff^s(T)}^2(1 +\|(\bfu_1, \bfu_2)\|_{[\bff^s(T)]^2})\cd N^{-\del}\|P_N \mb{\til u}\|_{\bff_{N}(T)} \nt
\end{align}
for some $\theta\in (0, 1]$, $\del>0$ and all $T\in (0, 1]$ if $s>\fr{d+1}{2}$, $K_1\le K_2$.

In the following, we abbreviate ``right-hand side'' as ``R.H.S.''
We only consider the estimates of the fourth and sixth terms on the R.H.S. of \eqref{diff1} and the fifth and sixth terms on the R.H.S. of \eqref{diff5}.
The first, third, and fifth terms on the R.H.S. of \eqref{diff1}, the first, third, fourth, and fifth terms on the R.H.S. of \eqref{diff2}, the first, second, fifth, sixth, and seventh terms on the R.H.S. of \eqref{diff3}, the the first term on the R.H.S.s of \eqref{diff4} and \eqref{diff7}, and the first and second terms on the R.H.S.s of \eqref{diff5} and \eqref{diff8} are bounded by the similar argument as the fourth term on the R.H.S. of \eqref{diff1}.
Also, the second term on the R.H.S. of \eqref{diff1} and the second and sixth terms on the R.H.S. of \eqref{diff2}, the third, fourth, and last terms on the R.H.S. of \eqref{diff3} are estimated by the similar argument as sixth terms on the R.H.S. of \eqref{diff1}.
Moreover, the second, third, and fourth terms on the R.H.S. of \eqref{diff4}, the third, fourth, and fifth terms on the R.H.S.s of \eqref{diff5} and \eqref{diff8}, the first, second, and third terms on the R.H.S.s of \eqref{diff6} and \eqref{diff9}, and the second term on the R.H.S. of \eqref{diff7} are similar to the fifth term on the R.H.S. of \eqref{diff5}.
Finally, the last term on the R.H.S.s of \eqref{diff4}, \eqref{diff6}, \eqref{diff8}, and \eqref{diff9} are similar to the sixth term on the R.H.S. of \eqref{diff5}.

Also, we omit $J_{\le K_1}, J_{\le K_2}$ since they are harmless.
First, we consider the estimates of the fifth and sixth terms on the R.H.S. of \eqref{diff8}.
On the fifth term, we have from Lemma \ref{qu2} and the H\"older inequality that
\begin{align}
&\Bl|\int_0^T\int_{\R^d}(u_1\cd \ove{ P_{N}((\ove{\na\cd w_1})\til u)})\ove{\na\cd \Del^{-1} P_{N}\til w}\,dxdt\Br| \nt \\
&\les T\|u_1\|_{L_T^\infty L_x^\infty} \|P_N((\ove{\na\cd w})\til u)\|_{L_T^\infty L_x^2}N^{-1}\|P_N \til w\|_{L_T^\infty L_x^2} \nt\\
&\les T\|\bfu_1\|_{\bff^s(T)}\cd\Bl(\sum_{N_1\sim N} N_1^s\|P_{N_1}\bfu\|_{\bff_{N_1}(T)}+ N^{-(s-\fr{d}{2})}\|\bfu_1\|_{\bff^s(T)}\Br)\|\til \bfu\|_{\bff^0(T)} \nt\\
&\quad \times \|P_N\til \bfu\|_{\bff_N(T)} \nt
\end{align}
if $s>\fr{d}{2}$.
Thus, this term is bounded by the second and third term on the R.H.S. of \eqref{pr_dene1_1}.
On the sixth term, we have from Lemma \ref{qu1} with $\til s=s$ and Remark \ref{rem_del} that
\begin{align}
&\Bl|\int_0^T\int_{\R^d}(u_1\cd \ove{P_{N}J_{(K_1, K_2]}((\ove{\na\cd w_1})u_1)})\ove{\na\cd \Del^{-1} P_{N}\til w}\,dxdt'\Br| \nt \\
&\les T\|u_1\|_{L_T^\infty L_x^\infty} K_1^{-s}N^{s}\|P_N((\ove{\na\cd w_1})u_1)\|_{L_T^\infty L_x^2}N^{-1}\|P_N \til w\|_{L_T^\infty L_x^2} \nt\\
&\les TK_1^{-s}\|\bfu_1\|_{\bff^s(T)}\cd\Bl(\sum_{N_1\sim N}N_1^{\til s}\|P_{N_1}\bfu_1\|_{\bff_{N_1}(T)} + N^{-1/2}\|\bfu_1\|_{\bff^{s}(T)}\Br)\|\bfu_1\|_{\bff^s(T)} \nt\\
&\quad \times \|P_N\til \bfu\|_{\bff_N(T)} \nt
\end{align}
if $s\ge \fr{1}{2}(d+1)$, which yields the desired bound.


Next, we consider the estimates of the fourth and sixth terms on the R.H.S. of \eqref{diff1}.
We perform the decomposition as \eqref{pr_ene6} for both terms.
On the sixth term,  Proposition \ref{trilin1}, Remark \ref{rem_del}, and the same calculation as \eqref{pr_ene6.5} yield that
\begin{align}
&\sum_{N_1\sim N_2\gg N}\Bl|\int_0^T\int_{\R^d} P_N((\na\cd  P_{N_1}w_1) P_{N_2}v_1) \cd \ove{P_N J_{(K_1, K_2]}\til u}\,dxdt\Br|  \label{pr_dene1_2}\\
&\les T^{\theta}K_1^{-\vep}\hs{-4mm}\sum_{N_1\sim N_2\gg N}\hs{-2mm}N_1^{s+\vep}N^{-\del}\|P_{N}\til \bfu\|_{\bff_N(T)}\|P_{N_2}\bfu_1\|_{\bff_{N_2}(T)}\|P_{N_1}\bfu_1\|_{\bff_{N_1}(T)} \nt
\end{align}
if $s>\fr{1}{2}(d+1)$, $\del =s-\fr{1}{2}(d+1)$, and $\vep\ge 0$.
Thus, this part is bounded by the last term on the R.H.S. of \eqref{pr_dene1_1} if $\vep\le s$.
Also, the same argument as \eqref{pr_ene6.5}--\eqref{pr_ene8} and Remark \ref{rem_del} yield
\begin{align}
&\Bl|\int_0^T\int_{\R^d}   ([P_N, P_{N_2}v_1]\na\cd P_{N_1}w_1) \cd \ove{P_N J_{(K_1, K_2]}\til u}\,dxdt\Br| \label{pr_dene1_3}\\
&\les T^{\theta}K_1^{-\vep}N^{-1+\vep}\min\{N_1, N_2\}^{\fr{d-1}{2}}N_1 N_2 \nt\\
&\quad \times\|P_{N}\til \bfu\|_{\bff_N(T)}\|P_{N_2}\bfu_1\|_{\bff_{N_2}(T)}\|P_{N_1}\bfu_1\|_{\bff_{N_1}(T)}  \nt
\end{align}
in the case of $N_1, N_2\les N$ if $\vep\ge 0$.
Hence, the summation of the R.H.S. of \eqref{pr_dene1_3} over $N_1\les N_2\sim N$ and $N_2\ll N_1\sim N$ is bounded by the fourth term on the R.H.S. of \eqref{pr_dene1_1}.

On the fourth term on the R.H.S. of \eqref{diff1}, if $s> \fr{1}{2}(d+1)$, then we have \eqref{pr_dene1_2} and \eqref{pr_dene1_3} with $\vep=0$, $\del = s-\fr{1}{2}(d+1)$, and $P_{N_1} w_1$, $P_{N_1}\bfu_1$ replaced by $P_{N_1} \til w$, $P_{N_1}\til \bfu$, respectively.
Thus, this term is bounded by the first, second, and third terms on the R.H.S. of \eqref{pr_dene1_1}.
We note that the fifth, sixth, and seventh terms on the right-hand side of \eqref{diff3} are also bounded by the similar argument by using the decomposition as \eqref{pr_ene10}.

\appendix
\section{The optimality of the nonlinear and trilinear estimates}
In this part, we state about the optimalities of the estimates obtained in Lemma \ref{nonlin1} and Proposition \ref{trilin1} which are shown in subsection 4.2 and 5.1, respectively.
\subsection{The optimality of Lemma \ref{nonlin1}}
We state about the optimality with respect to the setting of the length of the time interval in the defintion of the function spaces defined in subsection 2.2 above.
We can not control the High$\times$Low$\to$High interaction if we change $\eta_0(NT^{-1}(t-t_N))$ into $\eta_0(N^{a}T^{-1}(t-t_N))$ with $0<a<1$ in the definition of $F_{N, \si, T}$.
For $\si\in\R\sm\{0\}$, $N\in 2^{\N_0}$, $T>0$ and $0<a$, we define
\begin{align*}
F_{N, \si, T}^a := \bl\{&u \in C_0(\R; L_x^2(\R^d))\ \Big|\ \widehat{u}(\tau, \xi) \text { is supported in } \R \times I_N,\\
&\|u\|_{F_{N, \si, T}^a}=\sup_{t_N \in \R}\|\cf[\eta_0(N^aT^{-1}(t-t_N)) \cd u]\|_{X_{N, \si}}<\infty\br\}.
\end{align*}
We also define $F_{N, \si}^a(T)$ and $F_{\si}^{s, a}(T)$ by the same manner as the definition of $F_{N, \si}(T)$ and $F_{\si}^{s}(T)$.
Then, the following lemma holds.
\begin{lem}
Let $d\in \N$, and $\al, \be, \ga\in\R\sm\{0\}$ satisfy $\al-\ga=0$ and $\be+\ga\neq 0$.
Then, for any $0<a< 1$, $s\in \R$, $T\in (0, 1]$, $1\le j \le d$, and $C>0$, there exist $f\in F_{\al}^{s, a}(T)$ and $g\in F_{\be}^{s, a}(T)$ such that
\begin{equation*}
\sup_{t\in [0, T]}\Bl\|\int_0^t e^{i(t-t')\ga\Del}\pa_{x_j}(f\bar g)(t')\,dt'\Br\|_{H^s(\R^d)}
\ge C\|f\|_{F_{\al}^{s, a}(T)}\|g\|_{F_{\be}^{s, a}(T)}.
\end{equation*}
\end{lem}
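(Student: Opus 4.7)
The plan is to construct $f$ and $g$ as frequency-localized free Schr\"odinger evolutions multiplied by a temporal cutoff, exploiting the High $\times$ Low $\to$ High resonance available when $\al - \ga = 0$. Fix $\eta \in C^\infty_c(\R)$ with $\eta \equiv 1$ on $[-1, 1]$, and choose small constants $c, \del \in (0, 1]$ depending only on $T, d, \al, \be, \ga$ (to be fixed). Let $N \in 2^{\N}$ be a large parameter to be chosen. Writing $\xi = (\xi_{(j)}, \xi')$ with $\xi_{(j)}$ the $e_j$-component, set $\hat\phi_N = \mb{1}_{A_N}$ and $\hat\psi_N = \mb{1}_{B_N}$, where
\[
 A_N = \{\xi : \xi_{(j)} \in [N, N+1],\ |\xi'|_{\infty} \le \del\}, \quad
 B_N = \{\xi : \xi_{(j)} \in [0, c/N],\ |\xi'|_{\infty} \le \del\},
\]
and put $f(t, x) = \eta(t/T)\,e^{it\al\Del}\phi_N(x)$, $g(t, x) = \eta(t/T)\,e^{it\be\Del}\psi_N(x)$.

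The first step is to verify $\|f\|_{F_\al^{s, a}(T)} \les N^s \del^{(d-1)/2}$ and $\|g\|_{F_\be^{s, a}(T)} \les (c/N)^{1/2}\del^{(d-1)/2}$ uniformly in $N$ and $a \in (0, 1]$. The Fourier transform of $\eta(t/T)\,e^{it\al\Del}\phi_N$ is $\hat\phi_N(\xi)$ times a bump localized at $\lambda := \tau + \al|\xi|^2 = 0$ of width $\sim T^{-1}$; multiplying by $\eta_0(N^a T^{-1}(t - t_N))$ (whose Fourier transform has width $\sim N^a/T$) convolves in $\tau$, yielding a bump of width $\sim N^a/T$ and amplitude $\sim T N^{-a}$, times $\hat\phi_N(\xi)$ and a unimodular phase in $t_N$. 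Dyadically decomposing the $X_{N, \al}$-norm in $\lambda$, the main contribution comes from $2^j \les T^{-1}N^a$; the sum $\sum_j 2^j\cd TN^{-a} \sim 1$ telescopes, the tail is controlled by rapid decay, and the supremum over $t_N$ is harmless. Hence $\|P_N f\|_{F_{N, \al}^a(T)} \les \|\hat\phi_N\|_{L^2} \sim \del^{(d-1)/2}$, giving the bound on $\|f\|_{F_\al^{s, a}(T)}$, and the analogous argument at frequency scale $1$ gives the bound on $\|g\|_{F_\be^{s, a}(T)}$.

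For the second step, since $\eta \equiv 1$ on $[0, T]$, $f$ and $g$ restrict to free evolutions there, and a direct Fourier computation gives
\begin{align*}
& \cf_x\Bl[\int_0^T e^{i(T - t')\ga\Del}\pa_{x_j}(f\bar g)(t')\,dt'\Br](\xi) \\
& = c_d\,i\xi_j\,e^{-iT\ga|\xi|^2}\int \hat\phi_N(\xi_1)\,\ove{\hat\psi_N(\xi_1 - \xi)}\,\fr{e^{iT\Phi} - 1}{i\Phi}\,d\xi_1,
\end{align*}
with $\Phi = \ga|\xi|^2 - \al|\xi_1|^2 + \be|\xi_1 - \xi|^2$. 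Using $\al = \ga$ and setting $\xi_2 = \xi_1 - \xi$, one finds $\Phi = -2\ga\,\xi\cd\xi_2 + (\be - \ga)|\xi_2|^2$; for $\xi_1 \in A_N$ and $\xi_2 \in B_N$, the contribution to $\xi\cd\xi_2$ from the $e_j$-direction is $\sim N\cd c/N = c$ and from the transverse directions is $O(\del^2)$, while $|\xi_2|^2 = O(\del^2)$. Choosing $c, \del$ so small that $T(|\ga|c + (|\ga| + |\be|)\del^2) \le 1/100$ forces $|T\Phi| \le 1/10$ uniformly in $N$, whence $\ope{Re}\bl[(e^{iT\Phi} - 1)/(i\Phi)\br] \ge T/2$. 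For $\xi$ in the set $\{\xi_{(j)} \in [N, N + 1 - c/N],\ |\xi'|_{\infty} \le \del\}$, of measure $\sim \del^{d-1}$, the intersection $A_N \cap (\xi + B_N)$ has measure $\gtr c\del^{d-1}/N$, yielding $|\cf_x[u](T, \xi)| \gtr N \cd cT\del^{d-1}/N = cT\del^{d-1}$ and thus $\|u(T)\|_{H^s} \gtr c\,T\,N^s\,\del^{3(d-1)/2}$.

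The resulting ratio $\|u(T)\|_{H^s}/(\|f\|_{F_\al^{s, a}(T)}\|g\|_{F_\be^{s, a}(T)})$ is bounded below by $c^{1/2}\,T\,N^{1/2}\,\del^{(d-1)/2}$, which (for $c, \del$ fixed constants depending on $T, d, \al, \be, \ga$) diverges as $N \to \infty$, so any prescribed $C > 0$ is exceeded by taking $N$ large. The principal technical point is the uniform-in-$a$ norm computation: it relies on the fact that the rescaled cutoff $\eta_0(N^a T^{-1}\,\cd)$ and its Fourier transform have $L^1$-mass independent of $a$, so shortening the time localization from the standard scale $T/N$ (corresponding to $a = 1$) to $T/N^a$ with $a < 1$ does not change the size of the $F_{N, \al}$-norm of a free solution, while the computation above shows that it fails to absorb the High $\times$ Low $\to$ High bilinear loss, which is the content of the optimality claim.
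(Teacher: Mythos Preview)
Your proof is correct and takes a genuinely different route from the paper's. The paper chooses both frequency boxes to have the same small width $K^{-\delta}$ in the $x_1$-direction with $\delta=\tfrac{3}{2}(1-a)$, evaluates the Duhamel integral at the short time $t_0=TK^{-a}$, and obtains a divergence rate $K^{(1-a)/4}$; this construction genuinely needs $a<1$, since the competing requirements $\delta>1-a$ (for the phase bound) and $\delta<2(1-a)$ (for divergence) collapse at $a=1$. Your construction instead keeps the high-frequency box of unit width, takes the low-frequency box of width $c/N$ with $c$ small enough that the resonant phase $\Phi=-2\gamma\,\xi\!\cdot\!\xi_2+(\beta-\gamma)|\xi_2|^2$ stays $O(c)$ uniformly in $N$, and evaluates at $t=T$; the resulting rate $N^{1/2}$ is independent of $a$. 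Your observation that the $F_{N,\sigma}^a$-norm of a free solution is $\lesssim\|\hat\phi_N\|_{L^2}$ uniformly in $a$ (because the rescaled cutoff has $a$-independent $L^1$ Fourier mass) is correct and is what makes the argument uniform. In effect you are reproducing the classical ``iteration fails when $\alpha=\gamma$'' mechanism, and your computation shows that the Duhamel estimate in the lemma actually fails for every $a\in(0,1]$, not only for $a<1$; this is consistent with the paper's overall strategy, which for $a=1$ does not (and cannot) close a direct Duhamel iteration but relies on the energy method instead. The paper's construction, by contrast, is tailored so that the divergence rate degenerates as $a\to 1$, which more sharply isolates the role of the exponent $a$.
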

\begin{proof}
We may assume $j=1$.
Let $\del\in (1-a, 2(1-a))$ be chosen later and $M:=-(\be+\ga)/(2\ga)$.
For $k\in 2^{\N}$, we set $K=k^d$ and
\begin{gather}
D_1:=[K, K+3K^{-\del}]\times[0, 1]^{d-1}, \nt\\
D_2:=[K^{-\del}, 2K^{-\del}]\times[0, 1/2]^{d-1}, \nt\\
D:=[K-K^{-\del}, K+K^{-\del}]\times[0, 1/2]^{d-1}. \nt
\end{gather}
We also set $\cf_{x}[f_0]:= \mb{1}_{D_1}$ and $\cf_{x}[g_0]:= \mb{1}_{D_2}$.
Then, we have $(\cf_{x}[f_0]\ast \cf_{x}[\ove{g_0}])(\xi)\ge 2^{-(d-1)}K^{-\del}\mb{1}_{D}(\xi)$ for $\xi\in \R^d$.
From $\al-\ga=0$ and $M=-(\be+\ga)/(2\ga)$, we have
\[
\al|\xi-\eta|^2 - \be|\eta|^2 - \ga|\xi|^2=-2\ga(\xi-\eta-M\eta)\cd \eta.
\]
Thus, if $t'\in [0, TK^{-a}]$, $\xi\in D$, and $-\eta\in D_2$, then we have
\[
|t'(\al|\xi-\eta|^2 - \be|\eta|^2 - \ga|\xi|^2)|\les TK^{1-a-\del}\ll 1
\]
for $K\gg 1$.
Hence, by setting $t_0:=TK^{-a}$, we obtain
\[
\Re \int_0^{t_0}e^{it'(\al|\xi-\eta|^2-\be|\eta|^2-\ga|\xi|^2)}\,dt'\gtr t_0
\]
for $K\gg 1$.
Therefore, we have
\begin{align*}
&\Bl\|\int_0^{t_0} e^{i(t_0-t')\ga\Del}\pa_{x_1}((e^{it'\al\Del}f_0)( \ove{e^{it'\be\Del}g_0}))\,dt'\Br\|_{H^s(\R^d)} \\
&\gtr t_0 \|(1+|\xi|^2)^{s/2}\xi_1 (\cf_{x}[f_0]\ast \cf_{x}[\ove{g_0}])(\xi)\|_{L^2} \\
&\gtr t_0 K^{-\del}\|(1+|\xi|^2)^{s/2}\xi_1 \mb{1}_{D}(\xi)\|_{L^2} \\
&\sim TK^{s+1-3\del/2 -a} \\
&\gtr TK^{1-a -\del/2}\|e^{it\al\Del}f_0\|_{F_{\al}^{s, a}(T)}\|e^{it\be\Del}g_0\|_{F_{\be}^{s, a}(T)}.
\end{align*}
By letting $\del=3(1-a)/2$ and taking sufficiently large $K$, we finish the proof.
\end{proof}

\subsection{The optimality of Proposition \ref{trilin1}}
We remark that the exponent of $N_3^*$ in \eqref{trilin1_2} is optimal.
\begin{lem}
Let $d\in \N$ and $\al, \be, \ga\in \R\sm\{0\}$ satisfy $\al-\ga=0$, $\be+\ga\neq 0$.
We set $M:=-(\be+\ga)/(2\ga)$.
Then, for any $s<\fr{1}{2}(d-1)$, $T\in (0, 1]$, and $C>0$, there exist $u\in F_{N_1, \al}(T)$, $v\in F_{N_2, \be}(T)$, and $w\in F_{N_3, \ga}(T)$ such that
\begin{align*}
&\Bl|\int_\R\int_{\R^d}(\ove{g_{0}(t)u})(g_{0}(t)v)(g_{0}(t)w)\,dxdt\Br| \\
&\ge C(N_1^*)^{-1} (N_3^*)^{s}\|u\|_{F_{N_1, \al}(T)}\|v\|_{F_{N_2, \be}(T)}\|w\|_{F_{N_3, \ga}(T)}.
\end{align*}
where $g_0(t)=\mb{1}_{[0, T/N_1^*]}(t)$.
\end{lem}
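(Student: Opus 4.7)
The plan is to construct an explicit extremizer in the asymmetric regime $N_1 = N_3 = N \gg N_2 = M$, exploiting the resonance geometry available when $\al - \ga = 0$. Introduce large dyadic parameters $N, M \in 2^{\N}$ with $1 \ll M \ll (N/T)^{1/2}$, and let $b \in \R \sm \{-2\}$ be defined by $\be = \ga(b+1)$. Setting $N_1 = N_3 = N$ and $N_2 = M$ gives $N_1^* = N_2^* = N$ and $N_3^* = M$. I choose base frequencies
\[
\xi_2^* := M e_2, \qquad \xi_3^* := N e_1 + (bM/2) e_2, \qquad \xi_1^* := \xi_2^* + \xi_3^*,
\]
which satisfy $|\xi_1^*|, |\xi_3^*| \sim N$ and $|\xi_2^*| \sim M$. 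The key point is that these frequencies sit exactly on the resonance set $\al|\xi_1^*|^2 = \be|\xi_2^*|^2 + \ga|\xi_3^*|^2$: using $\al = \ga$ this condition reduces to $\xi_2^* \cd (\xi_3^* - (b/2)\xi_2^*) = 0$, which holds since $M e_2 \cd N e_1 = 0$.

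Next, define $\hat{\phi} := \mb{1}_{R_1}$, $\hat{\psi} := \mb{1}_{R_2}$, $\hat{\chi} := \mb{1}_{R_3}$ for anisotropic frequency boxes
\[
R_2 := \xi_2^* + [0, T^{-1}] \times [0, cM]^{d-1}, \quad R_3 := \xi_3^* + [0, c(N/T)^{1/2}]^d, \quad R_1 := R_2 + R_3,
\]
where $c = c(d) > 0$ is a small absolute constant chosen so that $R_2 \subset \ci_M$ and $R_1, R_3 \subset \ci_N$. Put $u := e^{it\al \Del}\phi$, $v := e^{it\be \Del}\psi$, $w := e^{it\ga \Del}\chi$. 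A standard calculation using the characteristic-paraboloid support of free Schr\"odinger waves, together with the observation that multiplication by $\eta_0(NT^{-1}(t-t_N))$ smears modulation only by the harmless scale $NT^{-1}$, will give $\|u\|_{F_{N_1,\al}(T)} \sim \|\phi\|_{L^2} = |R_1|^{1/2}$ and the analogous identities for $v, w$. Since $|R_2| \sim M^{d-1}/T$ and $|R_1|, |R_3| \sim (N/T)^{d/2}$, the product of norms satisfies $\|u\|_{F_{N_1,\al}(T)}\|v\|_{F_{N_2,\be}(T)}\|w\|_{F_{N_3,\ga}(T)} \sim N^{d/2} M^{(d-1)/2}/T^{(d+1)/2}$.

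The trilinear integral rewrites in Fourier variables as
\[
I = c_d \iint_{R_2 \times R_3} \int_0^{T/N} e^{-it\Phi(\xi_2, \xi_3)}\, dt\, d\xi_2\, d\xi_3, \qquad \Phi := 2\ga\, \xi_2 \cd (\xi_3 - (b/2)\xi_2),
\]
where the $\xi_1$-integration is automatic because $R_2 + R_3 \subset R_1$. Setting $\xi_j = \xi_j^* + \eta_j$ and Taylor-expanding around the resonant base point,
\[
\Phi = 2\ga \bl[N \eta_{2,1} + M(\eta_{3,2} - (b/2)\eta_{2,2})\br] + O(|\eta|^2),
\]
and the box dimensions are chosen precisely so that each term is $\les N/T$: the first from $|\eta_{2,1}| \les T^{-1}$; the second from $|\eta_{2,2}| \les M$, $|\eta_{3,2}| \les (N/T)^{1/2}$ together with $M \ll (N/T)^{1/2}$; and the quadratic remainder from $|\eta|^2 \les N/T$. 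Consequently $|t\Phi| \les 1$ for $t \in [0, T/N]$, so $\Re \int_0^{T/N} e^{-it\Phi}\, dt \gtr T/N$ and $|I| \gtr (T/N)|R_2||R_3| \sim M^{d-1} N^{d/2-1}/T^{d/2}$. Combining,
\[
\fr{|I|}{(N_1^*)^{-1}(N_3^*)^s \|u\|_{F_{N_1,\al}(T)}\|v\|_{F_{N_2,\be}(T)}\|w\|_{F_{N_3,\ga}(T)}} \gtr M^{(d-1)/2 - s}\, T^{1/2},
\]
which tends to $\infty$ as $M \to \infty$ (with $N$ chosen so that $M \ll (N/T)^{1/2}$) for every $s < (d-1)/2$, establishing the claim. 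The main technical obstacles will be the careful verification of the uniform phase bound $|\Phi| \les N/T$ on $R_2 \times R_3$ (especially the quadratic remainder) and of the norm identity $\|u\|_{F_{N_1,\al}(T)} \sim \|\phi\|_{L^2}$; both are routine but sensitive to the anisotropic geometry and to the small-constant choices needed to ensure $R_j \subset \ci_{N_j}$.
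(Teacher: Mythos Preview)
Your construction is correct for $d\ge 2$ and in that range is a genuinely different route from the paper's. The paper places the two high-frequency boxes at $(\til K,K,\ldots,K)$ and the low-frequency box at $(0,K,\ldots,K)$ with $\til K=K^{100d}$, relying on brute scale separation $K^2/\til K\to 0$ to kill the phase; no exact resonance is used and all side-lengths except the first are $\sim K$. You instead put the base frequencies exactly on the resonance surface and use anisotropic boxes $R_2,R_3$ whose side-lengths are tuned to the linearized phase; this lets you get away with the much milder relation $M\ll (N/T)^{1/2}$. Both yield the same final exponent $M^{(d-1)/2-s}$ (resp.\ $K^{(d-1)/2-s}$), so neither buys more than the other for the optimality statement, but your argument is a bit more structured while the paper's is more elementary. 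Two small repairs: the constant $c$ must depend on $\ga$ and $b$ as well as $d$ (otherwise the first-order term $2\ga N\eta_{2,1}$ only gives $|t\Phi|\les |\ga|$, not $|t\Phi|\ll 1$, and the real-part lower bound fails), and you also need $M\gg T^{-1}$ so that the first-coordinate width $T^{-1}$ of $R_2$ does not push $R_2$ out of $\ci_M$.

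There is, however, a genuine gap: your argument does not cover $d=1$. The entire construction hinges on having two orthogonal directions $e_1,e_2$ so that $\xi_2^*\perp(\xi_3^*-(b/2)\xi_2^*)$ with $|\xi_2^*|\ll|\xi_3^*|$. In one dimension the resonance identity $\al\xi_1^2-\be\xi_2^2-\ga\xi_3^2=2\ga\,\xi_2(\xi_3-(b/2)\xi_2)$ forces either $\xi_2^*=0$ (so $N_3^*=1$ and you cannot send it to infinity) or $\xi_3^*=(b/2)\xi_2^*$ (so all three frequencies are comparable and $N_3^*\sim N_1^*$). Either way your High$\times$Low$\to$High scenario with $N_3^*\to\infty$ is unavailable. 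The paper handles $d=1$ by a separate construction in the High$\times$High$\times$High regime $N_1\sim N_2\sim N_3\sim K$, placing the boxes so that $\xi_3/\xi_2\approx b/2$ (the region where the modulation bound of Lemma \ref{modest2} fails), obtaining $|I|\gtr TK^{-1}$ against norms $\sim 1$ and hence ratio $\gtr TK^{-s}\to\infty$ for $s<0$. You need to supply an analogous one-dimensional argument.
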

\begin{proof}
For $K\in 2^{\N}$, we set $\til K:=K^{100d}$.
First, we prove the multi-dimensional case.
Let $\til C=\til C(\al, \be, \ga)\gg 1$.
Also, we define
\begin{align*}
D_1:&=[\til K +1/\til C^, \til K + 2/\til C]\times [3K/2, 2K]^{d-1}, \\
D_2:&=[0,1/\til C]\times [K/2, K]^{d-1}, \\
D_3:&=[\til K, \til K + 2/\til C]\times [K/2, 3K/2]^{d-1}
\end{align*}
and $\cf_x[f_j](\xi):=\mb{1}_{D_j}(\xi)$ for $j=1, 2, 3$.
From $\al=\ga$ and $M=-(\be+\ga)/(2\ga)$, we have
\[
\al|\xi_1|^2-\be|\xi_1-\xi_3|^2-\ga|\xi_3|^2=-2\ga(\xi_1-M(\xi_1-\xi_3))\cd(\xi_1-\xi_3).
\]
Since $\til C=\til C(\al, \be, \ga)\gg 1$, if $t'\in [0, T\til K^{-1}]$, $\xi_j\in D_j$ for $j=1,2,3$, and $\xi_1-\xi_2-\xi_3=0$, then we have
\[
|t'(\al|\xi_1|^2-\be|\xi_2|^2-\ga|\xi_3|^2)|\les 1/\til C\ll 1.
\]
In particular, we have
\[
\inf_{\xi_j\in D_j, \xi_1-\xi_2-\xi_3=0}\Re\int_\R g_0(t)e^{it(\al|\xi_1|^2-\be|\xi_2|^2-\ga|\xi_3|^2)}dt\gtr \til K^{-1}.
\]
Also, we have $\mb{1}_{D_2}*\mb{1}_{D_3}(\xi)\gtr K^{d-1}\mb{1}_{D_1}(\xi)$ for any $\xi\in \ope{supp}\cf_x[f_1]$.
Therefore, we obtain
\begin{align*}
\int_{\xi_1-\xi_2-\xi_3=0}\ove{\cf_x[f_1](\xi_1)}\cf_x[f_2](\xi_2)\cf_x[f_3](\xi_3)\,d\xi_1d\xi_2d\xi_3
\gtr K^{2(d-1)},
\end{align*}
and thus we have
\[
\Bl|\int_\R\int_{\R^d}g_0(t)(\ove{e^{it\al\Del}f_1})(e^{it\be\Del}f_2)(e^{it\ga\Del}f_3)\,dxdt\Br|
\gtr \til K^{-1}K^{2(d-1)}.
\]
Let $u= e^{it\al\Del}f_1$, $v= e^{it\be\Del}f_2$, $w= e^{it\ga\Del}f_3$.
Then, we have $u\in F_{{N_1, \al}}(T)$, $v\in F_{{N_2, \be}}(T)$, and $w\in F_{{N_3, \ga}}(T)$ with $N_1=N_3=\til K$ and $N_2=K$.
Also, we obtain $\|u\|_{F_{N_1, \al}(T)}\|v\|_{F_{N_2, \be}(T)}\|v\|_{F_{N_3, \ga}(T)}\sim K^{3(d-1)/2}$.
Thus, if $s<\fr{1}{2}(d-1)$, then we finish the proof by taking sufficiently large $K$.

Next, we prove the one-dimensional case.
We consider the case that (B) in Lemma \ref{modest2} does not hold.
As Lemma \ref{modest2}, let $b\in \R\sm\{-2, -1\}$ satisfy $\be=\ga(b+1)$.
We first consider the case of $b>0$.
Let $n=n(\al, \be, \ga)\in \N$ be chosen later and $C_2=C_2(\al, \be, \ga), C_3=C_3(\al, \be, \ga)\gg 1$ satisfy $C_3\le |b|C_2/2$.
For $K\in 2^{\N}$, we set
\begin{align*}
D_1:&=[(b/2+1)K+1/C_2, (b/2+1)K + b/(2C_3),] \\
D_2:&=[K, K+1/C_2], \\
D_3:&=[bK/2, bK/2 + b/(2C_3)].
\end{align*}
We also set $f_1, \ldots, f_3$, and $u, v, w$ as the multi-dimensional case.
We have
\[
\fr{b}{2}\fr{1}{1+(C_2K)^{-1}}-\fr{b}{2}\le \fr{\xi_3}{\xi_2}-\fr{b}{2}\le \fr{b}{2}\fr{1}{1+(C_3K)}-\fr{b}{2}
\]
for any $\xi_2\in D_2$ and $\xi_3\in D_3$.
In particular, it holds that $|\fr{\xi_3}{\xi_2}-\fr{b}{2}|\le \max\{1/C_2, 1/C_3\}b/(2K)$.
Thus, if $t'\in [0, TK^{-1}]$, $\xi_j\in D_j$ for $j=1,2,3$, and $\xi_1-\xi_2-\xi_3=0$, then $C_2, C_3\gg 1$ and \eqref{modest2_1} with $\tau_1-\tau_2-\tau_3=0$ yield
\[
|t'(\al|\xi_1|^2-\be|\xi_2|^2-\ga|\xi_3|^2)|\ll 1,
\]
which yields
\[
\inf_{\xi_j\in D_j, \xi_1-\xi_2-\xi_3=0}\Re\int_\R g_0(t)e^{it(\al|\xi_1|^2-\be|\xi_2|^2-\ga|\xi_3|^2)}dt\gtr  K^{-1}.
\]
From the definitions of $D_1, \ldots, D_3$, and $C_3\le |b|C_2/2$, we obtain $\mb{1}_{D_2}*\mb{1}_{D_3}\gtr \mb{1}_{D_1}$.
Therefore, a similar argument as the proof of the multi-dimensional case yields
\begin{align*}
\int_{\xi_1-\xi_2-\xi_3=0}\ove{\cf_x[u_0](\xi_1)}\cf_x[v_0](\xi_2)\cf_x[w_0](\xi_3)
&= c\int_{\R^d}\mb{1}_{D_1}(\xi_1)(\mb{1}_{D_2}*\mb{1}_{D_3})(\xi_1)\,d\xi_1 \\
&\gtr K,
\end{align*}
which yields
\[
K\Bl|\int_\R\int_{\R^d}(\ove{g_{0}(t)u})(g_{0}(t)v)(g_{0}(t)w)\,dxdt\Br|
\gtr 1.
\]
Since $\|u\|_{F_{N_1, \al}(T)}\|v\|_{F_{N_2, \be}(T)}\|w\|_{F_{N_3, \ga}(T)}\sim 1$ and $s<0$, we finish the proof by taking sufficiently large $K$.
In the case of $b<0$, we can prove this by changing the sign of $C_3$ from $+$ into $-$ in the definitions of $D_1, D_3$.
In the case of $b=0$, we set $D_1, \ldots, D_3$ by
\begin{align*}
D_1:&=[2K+1/C_2, 2K + 1/C_3], \\
D_2:&=[K, K+1/C_2], \\
D_3:&=[K, K+1/C_3],
\end{align*}
where $C_2=C_2(\al, \be, \ga), C_3=C_3(\al, \be, \ga)\gg1$ satisfy $C_2>C_3$.
\end{proof}

\section*{Acknowledgement}
The author would like to express his deep gratitude to Mamoru Okamoto for helpful discussions and comments.
The author is grateful to Takamori Kato for very valuable comments about modified energies.
The author would like to show my appreciation to Soichiro Katayama for carefully proofreading the manuscript.
This work was supported by JST SPRING, Grant Number JPMJSP2138. 

\bibliographystyle{amsplain}

\end{document}